\documentclass[12pt,a4paper]{amsart}

\usepackage{amsmath,amsthm,amssymb,amsfonts}
\usepackage{mathrsfs}
\usepackage{hyperref}
\usepackage{enumitem}
\usepackage{microtype}
\usepackage{doi}

\newtheorem{theorem}{Theorem}[section]
\newtheorem{proposition}[theorem]{Proposition}

\newtheorem{corollary}[theorem]{Corollary}

\theoremstyle{definition}
\newtheorem{definition}[theorem]{Definition}
\newtheorem{example}[theorem]{Example}
\newtheorem{remark}[theorem]{Remark}

\newtheorem{problem}{Open Problem}

\DeclareMathOperator{\Span}{span}
\DeclareMathOperator{\Aut}{Aut}

\DeclareMathOperator{\val}{val}
\DeclareMathOperator{\diam}{diam}
\DeclareMathOperator{\supp}{supp}

\newcommand{\K}{\mathbb{K}}
\newcommand{\R}{\mathbb{R}}
\newcommand{\C}{\mathbb{C}}
\newcommand{\Z}{\mathbb{Z}}

\newcommand{\F}{\mathbb{F}}

\newcommand{\hh}{h}          

\newcommand{\EA}{E}          
\newcommand{\natbasis}{\mathcal{B}}  
\newcommand{\strmat}{A}      
\newcommand{\evop}{\mathcal{L}} 

\title{Expander Evolution Algebras}

\author{Piero Giacomelli}
\address{IT Department, Tenax Group \\ Via I Maggio, 226, 37020 Volargne (VR), Italy}
\email{pgiacome@gmail.com}
\urladdr{https://github.com/PieroGiacomelli} 

\date{\today}

\keywords{Evolution algebra; expander graph; Cheeger constant;
          evolution operator; spectral gap; Ramanujan algebra;
          nonassociative algebra; Markov chain}
\subjclass[2020]{17D92, 05C50, 15A18, 60J10}

\begin{document}

\begin{abstract}
We introduce \emph{expander evolution algebras} (EEAs), a class of
nonassociative algebras defined over an arbitrary field $\K$ in which
the underlying undirected loopless graph of the algebra---in the sense
of Kowalski---is an expander graph in the classical sense of Cheeger.
Starting from the formal graph definition of Kowalski and the algebraic
framework of Tian, we establish a dictionary between combinatorial
expansion and algebraic structure: the Cheeger constant of the
associated graph governs connectivity, the subalgebra lattice, the
growth of the evolution sequence, and---over $\R$ and $\C$---the
spectral gap of the evolution operator.  Over a general field $\K$ we
prove that EEAs are always connected and simple (as evolution algebras),
carry no proper large evolution subalgebras, and that every generator of
a \emph{symmetric} EEA is algebraically persistent.  Over $\C$ we
obtain the sharp Alon--Boppana lower bound for the second eigenvalue of
the evolution operator, leading to the definition of
\emph{Ramanujan evolution algebras} as optimal expanders.  We also
construct families of EEAs from Cayley graphs of finite groups.
We close with open problems.
\end{abstract}

\maketitle

\section{Introduction}

Evolution algebras, introduced by Tian and Vojtechovsky \cite{TianVojtechovsky2006}
and developed systematically in the monograph \cite{Tian2008}, are
nonassociative, commutative, and non-power-associative algebras over a
field $\K$.  Their defining feature is a distinguished \emph{natural
basis} $\{e_1,\ldots,e_n\}$ in which distinct basis elements multiply
to zero, while $e_i^2 = \sum_k a_{ik} e_k$ for structural constants
$a_{ik}\in\K$.  Behind this rule lies varied structure: discrete dynamical systems, Markov chains,
non-Mendelian genetics, and graph combinatorics all admit natural
descriptions within this framework.

Expander graphs, in contrast, are finite graphs that are simultaneously
\emph{sparse} (bounded degree) and \emph{highly connected}
(large Cheeger constant).  Introduced implicitly by Pinsker \cite{Pinsker1973}
and formalized by Alon and Milman \cite{AlonMilman1985}, they have
become a central object in combinatorics, computer science, and number
theory; see Kowalski's comprehensive treatment \cite{Kowalski2019}.

The present paper asks: \emph{What algebraic properties does an
evolution algebra inherit when its associated graph is an expander?}
The answer turns out to connect expansion to connectivity, the subalgebra lattice,
evolution dynamics, and the spectral theory of the algebra.

\subsection*{Main contributions}

\begin{enumerate}[label=(\arabic*)]
\item We define the \emph{underlying undirected loopless graph}
      $\varGamma(\EA,\natbasis)$ of an evolution algebra $\EA$
      (Definition~\ref{def:assoc-graph}) following Kowalski's formal
      setup \cite[Definition~2.1.1]{Kowalski2019}, and use the
      Cheeger constant $\hh(\varGamma)$
      \cite[Definition~3.1.1]{Kowalski2019} to define
      \emph{Expander Evolution Algebras} (Definition~\ref{def:EEA}).

\item We prove that $\hh(\varGamma(\EA,\natbasis))>0$ if and only if
      $\EA$ is a connected evolution algebra
      (Theorem~\ref{thm:connectivity}), and that for nonsingular EEAs
      the isomorphism class of the associated graph is an algebraic
      invariant (Theorem~\ref{thm:basis-invariance}).

\item We prove a \emph{logarithmic diameter bound}
      (Theorem~\ref{thm:diameter}) and a \emph{support growth theorem}
      for the evolution sequence (Theorem~\ref{thm:support-growth}).

\item For \emph{symmetric} EEAs ($a_{ij}=a_{ji}$), we prove that every
      generator is algebraically persistent
      (Theorem~\ref{thm:persistency}), and that the algebra is simple
      (Theorem~\ref{thm:simplicity}).

\item In the $d$-regular case we prove a \emph{Cheeger inequality for
      the evolution operator} over $\R$
      (Theorem~\ref{thm:cheeger-evop}) and, over $\C$, the
      \emph{Alon--Boppana bound} (Theorem~\ref{thm:alon-boppana}),
      leading to the notion of a \emph{Ramanujan evolution algebra}
      (Definition~\ref{def:Ramanujan}).

\item We construct EEA families from \emph{Cayley graphs} of finite
      groups (Section~\ref{sec:cayley}) and from tensor products
      (Section~\ref{sec:tensor}).
\end{enumerate}

\subsection*{Organisation}
Section~\ref{sec:prelim} fixes notation and recalls the necessary
background on graphs, expanders, and evolution algebras.
Sections~\ref{sec:EEA}--\ref{sec:structure} contain the core algebraic
theory over a general field.  Section~\ref{sec:evop} studies the
evolution operator.  Section~\ref{sec:markov} specialises to Markov
(probabilistic) EEAs.  Section~\ref{sec:regular} develops the
$d$-regular theory and the spectral results.  Section~\ref{sec:complex}
gives the sharper results over $\C$.  Section~\ref{sec:constructions}
provides constructions.  Section~\ref{sec:open} states open problems.

\section{Preliminaries}\label{sec:prelim}

\subsection{Graphs in the sense of Kowalski}

We follow Kowalski's formal definition \cite[Definition~2.1.1]{Kowalski2019}
verbatim.

\begin{definition}[Graph]\label{def:graph}
A \emph{graph} $\varGamma$ is a triple $(V,E,\mathrm{ep})$ where $V$
and $E$ are sets (of \emph{vertices} and \emph{edges} respectively) and
\[
  \mathrm{ep}: E \longrightarrow V^{(2)}
\]
is the \emph{endpoint map}, where $V^{(2)}$ denotes the collection of
subsets of $V$ of cardinality $1$ or $2$.  If $\alpha\in E$ and
$\mathrm{ep}(\alpha)=\{x\}$ (a singleton), then $\alpha$ is a
\emph{loop} at $x$.  If $|\mathrm{ep}(\alpha)|=2$ the edge is
\emph{proper}.  A graph is \emph{simple} if it has no loops and no two
distinct edges share the same pair of endpoints.  The \emph{valency} (or
\emph{degree}) of a vertex $x$ is $\val(x)=|\{\alpha\in E : x\in
\mathrm{ep}(\alpha)\}|$.  A graph is \emph{$d$-regular} if $\val(x)=d$
for every $x\in V$.  We write $|\varGamma|=|V|$ for the \emph{size} of
$\varGamma$.
\end{definition}

Throughout this paper, all graphs are assumed to be \emph{finite,
simple} (no loops, no multiple edges), and \emph{connected} unless
otherwise stated.

\begin{definition}[Adjacency matrix]\label{def:adj}
Let $\varGamma=(V,E,\mathrm{ep})$ be a finite graph with
$V=\{v_1,\ldots,v_n\}$.  The \emph{adjacency matrix} of $\varGamma$ is
the symmetric $\{0,1\}$-matrix $A_\varGamma=(a_{ij})$ where
$a_{ij}=|\{\alpha\in E : \mathrm{ep}(\alpha)=\{v_i,v_j\}\}|$.  For a
simple graph, $a_{ij}\in\{0,1\}$ and $a_{ii}=0$.
\end{definition}

\subsection{Expansion constant and expander graphs}

\begin{definition}[Cheeger constant]\label{def:cheeger}
Let $\varGamma=(V,E,\mathrm{ep})$ be a finite graph.  For a subset
$W\subseteq V$ let
\[
  E(W) \;=\; \{\alpha\in E : |\mathrm{ep}(\alpha)\cap W|=1\}
\]
be the \emph{edge boundary} of $W$.  The \emph{Cheeger constant} (or
\emph{expansion constant}) of $\varGamma$ is
\[
  \hh(\varGamma)
  \;=\;
  \min\!\left\{
    \frac{|E(W)|}{|W|} \;\Big|\;
    \emptyset \neq W \subsetneq V,\; |W|\le\tfrac{1}{2}|\varGamma|
  \right\},
\]
with the convention $\hh(\varGamma)=+\infty$ if $|V|\le 1$.
\end{definition}

The following basic facts are due to Kowalski
\cite[Proposition~3.1.2 and Lemma~3.1.4]{Kowalski2019}.

\begin{proposition}[Basic properties of $\hh$]\label{prop:cheeger-basic}
Let $\varGamma$ be a finite graph with $|V|\ge 2$.
\begin{enumerate}[label=\normalfont(\roman*)]
  \item $\hh(\varGamma)>0$ if and only if $\varGamma$ is connected.
  \item For any $W\subseteq V$ with $|W|=\delta|V|$, $0<\delta\le\frac12$,
        one must remove at least $\delta\,\hh(\varGamma)\,|V|$ edges to
        disconnect $W$ from its complement.
  \item If $\varGamma$ is connected then
        $\dfrac{2}{|V|}\le\hh(\varGamma)\le\min_{x\in V}\val(x)$.
\end{enumerate}
\end{proposition}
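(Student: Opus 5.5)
Each part follows directly from Definition~\ref{def:cheeger}, using that the minimum runs over a finite nonempty family of subsets (since $|V|\ge 2$, any singleton $W$ qualifies).

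For (i), I argue both directions by contraposition on the boundary.

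\emph{Disconnected implies $\hh=0$.} Suppose $\varGamma$ is disconnected and take a connected component $C$.
\begin{itemize}
\item Both $C$ and $V\setminus C$ are nonempty, and no edge joins them, so $E(C)=E(V\setminus C)=\emptyset$.
\item One of the two sets has size $\le\tfrac12|V|$, so it is admissible and gives ratio $0$.
\end{itemize}

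\emph{$\hh=0$ implies disconnected.} Suppose $\hh(\varGamma)=0$. The minimum is attained, so some admissible $W$ has $E(W)=\emptyset$. Since $\emptyset\ne W\subsetneq V$, no path from a vertex of $W$ to a vertex of $V\setminus W$ can exist. Indeed, such a path would contain an edge with exactly one endpoint in $W$. Hence $\varGamma$ is disconnected.

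For (ii), I treat it as an unpacking of the definition. Disconnecting $W$ from its complement means deleting a set of edges after which no edge has exactly one endpoint in $W$. Such a set must contain all of $E(W)$. Since $W$ is admissible,
\[
|E(W)|\ge \hh(\varGamma)|W|=\delta\,\hh(\varGamma)|V|.
\]

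For (iii), I prove the two inequalities separately.

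\emph{Upper bound.} Take $W=\{x\}$, which is admissible because $1\le\tfrac12|V|$. Since the graph is simple there are no loops, so the edges with exactly one endpoint in $\{x\}$ are exactly the edges at $x$. Thus $|E(\{x\})|=\val(x)$, so $\hh\le\val(x)$ for every $x$.

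\emph{Lower bound.} Let $\varGamma$ be connected and $W$ admissible.
\begin{itemize}
\item By the argument in (i), $|E(W)|\ge 1$.
\item Since $|W|\le |V|/2$, we get $|E(W)|/|W|\ge 1/|W|\ge 2/|V|$.
\end{itemize}

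None of these steps is a real obstacle. The only points needing care are making sure singletons are admissible, which uses $|V|\ge 2$, and using looplessness in the valency identity. If multigraphs or loops were allowed, a loop at $x$ counts toward $\val(x)$ but not toward $E(\{x\})$. The inequality $|E(\{x\})|\le\val(x)$ would still hold, so the upper bound survives in that case.
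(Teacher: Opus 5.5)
Your proof is correct. The paper gives no argument of its own: it just cites Kowalski's Proposition~3.1.2 and Lemma~3.1.4. Your direct unpacking of Definition~\ref{def:cheeger} is the standard argument behind that citation: a small component for (i), $E(W)$ as a cut that must be removed entirely for (ii), and singletons together with $|E(W)|\ge 1$ for (iii).

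One minor aside, which does not affect the proof since the paper assumes all graphs are simple. Under the paper's literal formula $E(W)=\{\alpha : |\mathrm{ep}(\alpha)\cap W|=1\}$, a loop at $x$ \emph{would} lie in $E(\{x\})$. Your closing remark that it does not matches Kowalski's convention (edges with one endpoint in $W$ and one outside) rather than the paper's.
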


\begin{definition}[Expander graph]\label{def:expander}
A finite connected graph $\varGamma$ is called an
\emph{$h$-expander} for a constant $h>0$ if
$\hh(\varGamma)\ge h$.
A family $(\varGamma_m)_{m\ge 1}$ of finite connected graphs is an
\emph{expander family} if:
\begin{enumerate}[label=\normalfont(\roman*)]
  \item $|\varGamma_m|\to\infty$;
  \item the degrees are uniformly bounded: $\sup_{m}\max_{x\in V_m}\val(x)<\infty$;
  \item the expansion constants are uniformly bounded below:
        $\inf_m \hh(\varGamma_m)>0$.
\end{enumerate}
\end{definition}

\begin{theorem}[Diameter bound {\cite[Corollary~3.1.10]{Kowalski2019}}]
\label{thm:diam-expander}
Let $\varGamma$ be a finite connected graph with
$\hh(\varGamma)\ge h>0$ and $\max_{x}\val(x)\le d$.  Then
\[
  \diam(\varGamma) \;\le\;
  \frac{2d}{h}\,\log|\varGamma|  \;+\;1.
\]
\end{theorem}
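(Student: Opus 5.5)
The proof follows the standard ball-growth argument. Put $n=|\varGamma|$ and $t=h/d$. By Proposition~\ref{prop:cheeger-basic}(iii) we have $h\le\hh(\varGamma)\le\min_x\val(x)\le d$, so $0<t\le 1$. Fix a vertex $x$ and let $B_r(x)$ be the ball of radius $r$ around it. The key one-step estimate is the following: if $|B_r(x)|\le n/2$, then
\[
  |B_{r+1}(x)|\;\ge\;(1+t)\,|B_r(x)|.
\]
To see this, apply Definition~\ref{def:cheeger} with $W=B_r(x)$, which gives $|E(W)|\ge h|W|$. Every boundary edge has its outer endpoint in the sphere $S_{r+1}(x)=B_{r+1}(x)\setminus B_r(x)$. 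Each vertex of that sphere has valency at most $d$, so it absorbs at most $d$ of these edges. Hence $|S_{r+1}(x)|\ge h|W|/d$. Iterating from $|B_0(x)|=1$ gives $|B_r(x)|\ge(1+t)^r$ for as long as the balls have size at most $n/2$.

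Next let $r_0$ be the least radius with $|B_{r_0}(x)|>n/2$. Then $(1+t)^{r_0-1}\le n/2$, so
\[
  r_0\le 1+\frac{\log(n/2)}{\log(1+t)}.
\]
Run the same argument for a second vertex $y$. The balls $B_{r_0}(x)$ and $B_{r_0}(y)$ each contain more than half the vertices, so they intersect, and therefore $d(x,y)\le 2r_0$. This yields
\[
  \diam(\varGamma)\;\le\;\frac{2\log(n/2)}{\log(1+h/d)}+2.
\]

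The final step converts this into the stated form using an elementary lower bound for $\log(1+t)$ on $(0,1]$. \textbf{This conversion is the main obstacle.} The natural concave bound $\log(1+t)\ge t\log 2$ only gives $\frac{2d}{h}\log_2(n/2)+2$, which is weaker than the stated bound by a factor of about $1/\log 2$.

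My first attempt to close the gap is to sharpen the meeting step rather than the growth step. Stop the ball around $x$ at radius $r_0$, where it has size more than $n/2$. Then grow the \emph{complement} side around $y$ only until the two sets must meet. Because $V\setminus B_{r_0}(x)$ has size less than $n/2$, the Cheeger bound applies to it as well, so the edges leaving it also force growth of the neighbouring layer of $B_{r_0}(x)$. Bookkeeping these two growth processes so that the additive constant drops from $2$ to $1$ and the $\log(n/2)$ terms combine into a single $\log n$ is where I expect the real care to be needed.

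If this bookkeeping still leaves a constant slack, I would fall back on the computation in Kowalski \cite[Corollary~3.1.10]{Kowalski2019}, from which the statement is quoted. The plan is to check that its normalisation of $\hh$ (edge boundary per vertex, with $|W|\le n/2$) and of $d$ agrees with Definition~\ref{def:cheeger}, and then to follow its inequality chain, which in any case begins from exactly the one-step estimate above.
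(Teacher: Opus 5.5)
Your one-step estimate and the two-ball bound $\diam(\varGamma)\le 2\log(n/2)/\log(1+t)+2$, with $t=h/d\in(0,1]$, are correct. They do not give the statement, however, and the paper only cites Kowalski without supplying an argument, so your proposal has to close the gap itself. Since $\log(1+t)<t$, the coefficient $2/\log(1+t)$ is strictly larger than $2/t$ (about $4.93$ versus $4$ at $t=1/2$). So for large $n$ your bound exceeds $(2/t)\log n+1$ by an amount growing like $\log n$. The loss is multiplicative. Lowering the additive constant from $2$ to $1$, or merging $\log(n/2)$ into $\log n$, cannot repair it, and a second ball around $y$ grown at rate $1+t$ just reproduces the same denominator. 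Deferring to the cited source is not a proof step either. Any chain that starts from your one-step estimate and stops at two balls meeting lands on the same $\log(1+h/d)$.

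The missing idea is that after the half-way point the process runs at rate $1-t$, and this exactly compensates. Use a single centre $x$. With $r_0$ minimal such that $|B_{r_0}(x)|>n/2$, you already have $r_0\le 1+\log(n/2)/\log(1+t)$. For $r\ge r_0$ set $C_r=V\setminus B_r(x)$. Since $|C_r|<n/2$, Definition~\ref{def:cheeger} applies to $C_r$, which therefore has at least $h|C_r|$ boundary edges. Each of these edges has its $C_r$-endpoint in $S_{r+1}(x)$, and each such vertex absorbs at most $d$ of them, so $|C_{r+1}|\le(1-t)|C_r|$. Hence $C_r$ is empty after at most $1+\log(n/2)/(-\log(1-t))$ further steps (at most one step if $t=1$). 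For $n\ge 2$ (the case $n=1$ is trivial), reading $1/(-\log(1-t))$ as $0$ when $t=1$, this gives
\[
  \diam(\varGamma)\;\le\;2+\log(n/2)\Bigl(\frac{1}{\log(1+t)}+\frac{1}{-\log(1-t)}\Bigr).
\]
Now let $\psi(u)=\log(1+u)-2u/(2+u)$. Then $\psi(0)=0$ and $\psi'(u)=u^2/\bigl((1+u)(2+u)^2\bigr)\ge 0$, so $\psi\ge 0$ on $[0,\infty)$ and $\psi\le 0$ on $(-1,0]$. This yields $1/\log(1+t)\le 1/t+1/2$ and $1/(-\log(1-t))\le 1/t-1/2$, whose sum is $2/t$. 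Therefore $\diam(\varGamma)\le (2/t)\log n+2-(2/t)\log 2<(2d/h)\log n+1$, because $t\le 1$ gives $(2/t)\log 2\ge 2\log 2>1$.
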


\subsection{Evolution algebras}\label{sub:ea}

We follow Tian's monograph \cite{Tian2008} for the algebraic
definitions.

\begin{definition}[Evolution algebra]\label{def:ea}
Let $\K$ be a field.  An \emph{evolution algebra} over $\K$ is an
algebra $(\EA,\cdot)$ together with a countable \emph{natural basis}
$\natbasis=\{e_i\}_{i\in\Lambda}$ such that
\[
  e_i \cdot e_j = 0 \quad (i\ne j),
  \qquad
  e_i \cdot e_i = \sum_{k\in\Lambda} a_{ik}\,e_k \quad
  \text{for all }i\in\Lambda,
\]
where $a_{ik}\in\K$ and only finitely many $a_{ik}$ are nonzero for
each fixed $i$.  The scalars $(a_{ik})$ are the
\emph{structural constants} of $\EA$ with respect to $\natbasis$, and
the matrix $\strmat=(a_{ik})$ is the \emph{structural matrix}.  We
call $\dim\EA=|\Lambda|$ the \emph{dimension} of $\EA$.
\end{definition}

By Tian \cite[Theorem~1]{Tian2008}, when $\Lambda$ is finite the
natural basis $\natbasis$ is a linear basis for $\EA$.  We restrict to
the \emph{finite-dimensional} case throughout:
$n=\dim\EA<\infty$, $\Lambda=\{1,\ldots,n\}$.

\begin{remark}
Every evolution algebra is \emph{commutative} and \emph{flexible}, but
\emph{not} associative or power-associative in general
\cite[Corollary~1]{Tian2008}.
\end{remark}

\begin{definition}[Evolution subalgebra and ideal]\label{def:subalg}
Let $\EA$ be an evolution algebra with natural basis $\natbasis=
\{e_1,\ldots,e_n\}$.
\begin{enumerate}[label=\normalfont(\roman*)]
  \item A subspace $\EA'\subseteq\EA$ is an \emph{evolution subalgebra}
        if it has a natural basis $\{e_i\}_{i\in S}$ that extends to a
        natural basis of $\EA$.
  \item Every evolution subalgebra is automatically a two-sided
        \emph{evolution ideal}
        \cite[Proposition~2]{Tian2008}.
  \item $\EA$ is \emph{connected} if it cannot be written as
        $\EA=\EA_1\oplus\EA_2$ (direct sum of proper evolution subalgebras).
  \item $\EA$ is \emph{simple} if it has no proper evolution ideal.
\end{enumerate}
\end{definition}

\begin{definition}[Principal and plenary powers]\label{def:powers}
Let $x\in\EA$.  The \emph{principal powers} are $x^1=x$, $x^m=x^{m-1}\cdot x$ for
$m\ge 2$.  The \emph{plenary powers} are $x^{[0]}=x$, $x^{[k]}=
x^{[k-1]}\cdot x^{[k-1]}$ for $k\ge 1$.
\end{definition}

\begin{definition}[Algebraic persistency and transiency]\label{def:persist}
A generator $e_i$ of $\EA$ is \emph{algebraically persistent} if $e_i$
occurs (has nonzero coefficient) in $e_i^{[k]}$ for all $k\ge 0$.
Otherwise, $e_i$ is \emph{algebraically transient}.  We say $\EA$ is
\emph{algebraically persistent} if every generator is algebraically
persistent.
\end{definition}

\begin{definition}[Occurrence relation]\label{def:occur}
For generators $e_i,e_j$ of $\EA$, write $e_i\prec e_j$ if $e_i$
occurs in $e_j^2=\sum_k a_{jk} e_k$, i.e., $a_{ji}\ne 0$.  We extend
this to plenary powers: $e_i\prec^{[k]} e_j$ if $e_i$ occurs in
$e_j^{[k]}$.
\end{definition}

\subsection{The directed graph of an evolution algebra}

Elduque and Labra \cite{ElduqueLabra2013} associate a weighted directed
graph to any evolution algebra.

\begin{definition}[Weighted digraph]\label{def:digraph}
Let $\EA$ be an evolution algebra with natural basis $\natbasis=
\{e_1,\ldots,e_n\}$ and structural matrix $\strmat=(a_{ij})$.  The
\emph{weighted digraph} $D(\EA,\natbasis)$ has vertex set $\{1,\ldots,n\}$
and a directed edge $i\to j$ with weight $a_{ij}$ whenever
$a_{ij}\ne 0$ (and $i\ne j$).
\end{definition}

Key facts from Elduque--Labra \cite{ElduqueLabra2013}:
\begin{itemize}
  \item $\EA$ is nilpotent if and only if $D(\EA,\natbasis)$ has no
        directed cycle.
  \item $\Aut(\EA)$ is finite whenever $\EA^2=\EA$.
\end{itemize}

\section{Expander Evolution Algebras: Definition and First Properties}
\label{sec:EEA}

\subsection{The underlying undirected graph}

Since Kowalski's expander theory is developed for \emph{undirected} graphs, and the
structural matrix $\strmat$ need not be symmetric over a general field, we pass to the
symmetrization.

\begin{definition}[Underlying undirected loopless graph]\label{def:assoc-graph}
Let $\EA$ be a finite-dimensional evolution algebra over $\K$ with
natural basis $\natbasis=\{e_1,\ldots,e_n\}$ and structural matrix
$\strmat=(a_{ij})$.  The \emph{underlying undirected loopless graph}
$\varGamma(\EA,\natbasis)$ is the simple graph with:
\begin{itemize}
  \item vertex set $V=\{1,\ldots,n\}$;
  \item edge set $E_\varGamma=\bigl\{\{i,j\}: i\ne j,\;
                                    a_{ij}\ne 0 \text{ or } a_{ji}\ne 0\bigr\}$;
  \item endpoint map $\mathrm{ep}(\{i,j\})=\{i,j\}$.
\end{itemize}
In the notation of Definition~\ref{def:graph}, $\varGamma(\EA,\natbasis)
=(V,E_\varGamma,\mathrm{ep})$.
\end{definition}

\begin{remark}\label{rem:symm}
$\varGamma(\EA,\natbasis)$ is the \emph{symmetrisation} of the directed
graph $D(\EA,\natbasis)$: the edge $\{i,j\}$ exists in
$\varGamma(\EA,\natbasis)$ if and only if at least one of $i\to j$ or
$j\to i$ is present in $D(\EA,\natbasis)$.  The diagonal entries
$a_{ii}$ do not contribute edges to $\varGamma(\EA,\natbasis)$, since we
require loops to be excluded.
\end{remark}

\begin{remark}\label{rem:field}
The definition of $\varGamma(\EA,\natbasis)$ only uses the
\emph{support pattern} of $\strmat$---which entries are zero and which
are not---not their specific values.  Thus $\varGamma(\EA,\natbasis)$
is independent of the particular nonzero values of $a_{ij}$.
\end{remark}

\begin{definition}[Expansion constant of an evolution algebra]\label{def:ea-cheeger}
Let $\EA$ be a finite-dimensional evolution algebra with natural basis
$\natbasis$.  The \emph{expansion constant of $\EA$ with respect to
$\natbasis$} is
\[
  \hh(\EA,\natbasis) \;:=\; \hh\!\bigl(\varGamma(\EA,\natbasis)\bigr),
\]
where the right-hand side is the Cheeger constant of Definition
\ref{def:cheeger}.
\end{definition}

\begin{definition}[Expander Evolution Algebra]\label{def:EEA}
A finite-dimensional evolution algebra $\EA$ over $\K$ with natural
basis $\natbasis$ is called an \emph{$h$-Expander Evolution Algebra}
($h$-EEA) if
\[
  \hh(\EA,\natbasis) \;\ge\; h \;>\; 0.
\]
We call $h$ the \emph{expansion constant} of $\EA$.  A family
$(\EA_m,\natbasis_m)_{m\ge 1}$ with $\dim\EA_m\to\infty$ is an
\emph{EEA family} if $\inf_m \hh(\EA_m,\natbasis_m)>0$.
\end{definition}

\subsection{Connectivity and simplicity}\label{sub:connectivity}

\begin{theorem}[Connectivity]\label{thm:connectivity}
Let $\EA$ be a finite-dimensional evolution algebra over $\K$ with
natural basis $\natbasis=\{e_1,\ldots,e_n\}$.  The following are
equivalent:
\begin{enumerate}[label=\normalfont(\roman*)]
  \item $\EA$ is connected (Definition~\ref{def:subalg}(iii)).
  \item $\varGamma(\EA,\natbasis)$ is a connected graph.
  \item $\hh(\EA,\natbasis)>0$.
\end{enumerate}
\end{theorem}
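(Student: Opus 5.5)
Split the proof into (ii)$\Leftrightarrow$(iii) and (i)$\Leftrightarrow$(ii). The first is immediate from Proposition~\ref{prop:cheeger-basic}(i) applied to $\varGamma=\varGamma(\EA,\natbasis)$ when $n\ge 2$. When $n=1$ the convention $\hh=+\infty$ makes (iii) true; the one-vertex graph is connected, and a one-dimensional algebra admits no decomposition into two proper nonzero evolution subalgebras. So all three statements hold in that case, and from now on we assume $n\ge 2$.

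For (i)$\Leftrightarrow$(ii), we interpret a decomposition $\EA=\EA_1\oplus\EA_2$ into proper evolution subalgebras in the sense of Definition~\ref{def:subalg}. Each $\EA_k$ is spanned by $\{e_i\}_{i\in S_k}$ for a nonempty proper index set $S_k$, with $S_1\sqcup S_2=\{1,\dots,n\}$, and each is closed under multiplication.

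We first prove (ii)$\Rightarrow$(i) by contraposition. Suppose $\EA=\EA_1\oplus\EA_2$ as above. For $i\in S_1$ we have $e_i^2\in\EA_1$, so $a_{ij}=0$ for every $j\in S_2$. Symmetrically, $a_{ji}=0$ for $j\in S_2$ and $i\in S_1$. Hence no edge of $\varGamma$ joins $S_1$ to $S_2$, and $\varGamma$ is disconnected.

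Conversely, for (i)$\Rightarrow$(ii), suppose $\varGamma$ is disconnected. Let $S_1$ be the vertex set of one connected component and $S_2$ its complement; both are nonempty. By Definition~\ref{def:assoc-graph}, every pair $i\in S_1$, $j\in S_2$ satisfies $a_{ij}=a_{ji}=0$. Therefore $e_i^2=\sum_k a_{ik}e_k\in\Span\{e_k:k\in S_1\}=:\EA_1$ for $i\in S_1$, and likewise $e_j^2\in\EA_2:=\Span\{e_k:k\in S_2\}$ for $j\in S_2$. Products of distinct basis vectors vanish, so $\EA_1$ and $\EA_2$ are closed under multiplication and satisfy $\EA_1\EA_2=0$. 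Each has a natural basis that is a subset of $\natbasis$, so each is an evolution subalgebra. Thus $\EA=\EA_1\oplus\EA_2$ with both summands proper, and $\EA$ is not connected.

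The main subtlety is definitional rather than computational. Definition~\ref{def:subalg}(iii) speaks of evolution subalgebras without making explicit that the decomposition must be compatible with the fixed basis $\natbasis$. For the direction (ii)$\Rightarrow$(i), the cleanest way to secure this is to adopt Tian's convention, which Definition~\ref{def:subalg}(i) supports: the natural bases of $\EA_1$ and $\EA_2$ are subsets of $\natbasis$. We will state this reading explicitly. We also note that the diagonal entries $a_{ii}$, which $\varGamma$ ignores, play no role in either direction, since they never couple $S_1$ with $S_2$.
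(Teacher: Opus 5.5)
Your proof is correct and is essentially the paper's argument: (ii)$\Leftrightarrow$(iii) comes from Proposition~\ref{prop:cheeger-basic}(i), and both directions of (i)$\Leftrightarrow$(ii) go by contraposition, using that a splitting of $\EA$ into spans of complementary subsets of $\natbasis$ is the same as a vertex partition of $\varGamma(\EA,\natbasis)$ with no crossing edges. Your explicit treatment of $n=1$ and of the fixed-basis reading of Definition~\ref{def:subalg}(iii) goes beyond the paper, which uses that reading without stating it, and the reading is genuinely needed: for $e_1^2=e_1+e_2$, $e_2^2=0$ the graph $\varGamma(\EA,\natbasis)$ is connected, yet $\{e_1+e_2,\,e_2\}$ is another natural basis in which $\EA=\K(e_1+e_2)\oplus\K e_2$.
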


\begin{proof}
The equivalence (ii)$\Leftrightarrow$(iii) is
Proposition~\ref{prop:cheeger-basic}(i).

\medskip
\noindent
\textbf{(i)$\Rightarrow$(ii).}
Suppose $\varGamma(\EA,\natbasis)$ is disconnected; let $S\sqcup
S^c=\{1,\ldots,n\}$ be a partition into two non-empty sets with no
edges between them.  This means: $a_{ij}=0$ and $a_{ji}=0$ whenever
$i\in S$, $j\in S^c$.  Let $\EA_1=\Span_\K\{e_i:i\in S\}$ and
$\EA_2=\Span_\K\{e_j:j\in S^c\}$.  For $i\in S$:
\[
  e_i^2 \;=\; \sum_k a_{ik}\,e_k \;=\; \sum_{k\in S} a_{ik}\,e_k \;\in\; \EA_1,
\]
because $a_{ij}=0$ for all $j\in S^c$.  Hence $\EA_1$ is an evolution
subalgebra.  By the same argument $\EA_2$ is an evolution subalgebra.
Since the basis of $\EA_1$ extends (with $\natbasis|_{S^c}$) to a
natural basis of $\EA$, the sum is direct: $\EA=\EA_1\oplus\EA_2$,
contradicting connectivity of $\EA$.

\medskip
\noindent
\textbf{(ii)$\Rightarrow$(i).}
Suppose $\EA=\EA_1\oplus\EA_2$ with $\EA_k$ proper evolution
subalgebras, generated respectively by $\{e_i:i\in S\}$ and
$\{e_j:j\in S^c\}$ with $S\sqcup S^c=\{1,\ldots,n\}$.  Since $\EA_1$
is an evolution subalgebra, for every $i\in S$:
$e_i^2\in\EA_1$, so $a_{ij}=0$ for all $j\in S^c$.  Symmetrically
$a_{ji}=0$ for all $j\in S^c$, $i\in S$.  Hence there are no edges
between $S$ and $S^c$ in $\varGamma(\EA,\natbasis)$, contradicting
connectivity.
\end{proof}

\begin{corollary}\label{cor:EEA-connected}
Every EEA is connected.
\end{corollary}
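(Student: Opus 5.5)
The corollary follows directly from Theorem~\ref{thm:connectivity}, specifically from the implication (iii)$\Rightarrow$(i). Let $\EA$ be an $h$-EEA with respect to a natural basis $\natbasis$. By Definition~\ref{def:EEA} we have $\hh(\EA,\natbasis)\ge h>0$, so condition~(iii) of Theorem~\ref{thm:connectivity} holds. The equivalence of (i), (ii) and (iii) then says that $\EA$ is connected in the sense of Definition~\ref{def:subalg}(iii).

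To make the chain explicit, I would pass through the graph. First, by Proposition~\ref{prop:cheeger-basic}(i), positivity of $\hh(\varGamma(\EA,\natbasis))$ means $\varGamma(\EA,\natbasis)$ is connected. Second, a decomposition $\EA=\EA_1\oplus\EA_2$ into proper evolution subalgebras spanned by $\{e_i:i\in S\}$ and $\{e_j:j\in S^c\}$ would force $a_{ij}=a_{ji}=0$ for all $i\in S$, $j\in S^c$. Such vanishing means there are no edges between $S$ and $S^c$, contradicting connectedness of the graph.

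\textbf{Degenerate case.} If $\dim\EA=1$, the convention $\hh=+\infty$ still gives $\hh>0$. In that case $\EA$ has no proper nonzero evolution subalgebras at all, so it is trivially connected, and the corollary holds without appeal to the theorem.

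\textbf{Main obstacle.} There is essentially none. The only point deserving care is the step of Theorem~\ref{thm:connectivity} where the subalgebras $\EA_1,\EA_2$ are taken to be spanned by complementary subsets of the \emph{same} natural basis $\natbasis$. If a decomposition arose from a different natural basis, one would need the subalgebras' bases to extend compatibly with $\natbasis$. I would simply rely on the theorem as stated, since this is exactly how its proof reads Definition~\ref{def:subalg}.
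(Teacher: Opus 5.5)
Your proposal is correct and matches how the paper obtains the corollary: it is the implication (iii)$\Rightarrow$(i) of Theorem~\ref{thm:connectivity} applied to $\hh(\EA,\natbasis)\ge h>0$. Your caveat is well founded, because this argument (like the paper's proof of the theorem) only excludes decompositions along $\natbasis$. That matters for degenerate algebras: $e_1^2=e_1+e_2$, $e_2^2=0$ gives $\hh(\EA,\natbasis)=1$, yet $\{e_1+e_2,e_2\}$ is also a natural basis and $\EA=\K(e_1+e_2)\oplus\K e_2$. So the corollary holds only when connectedness is read relative to $\natbasis$.
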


\begin{theorem}[Simplicity of symmetric EEAs]\label{thm:simplicity}
Let $\EA$ be a finite-dimensional evolution algebra over $\K$ with
natural basis $\natbasis=\{e_1,\ldots,e_n\}$.  Suppose $\EA$ is
\emph{symmetric}, meaning $a_{ij}=a_{ji}$ for all $i\ne j$.  If $\EA$
is an $h$-EEA for some $h>0$, then $\EA$ is simple.
\end{theorem}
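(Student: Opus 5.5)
We reduce simplicity to the graph connectivity already established. Using Definition~\ref{def:subalg}(ii), every evolution subalgebra is an evolution ideal. We take as the relevant notion of \emph{evolution ideal} in Definition~\ref{def:subalg}(iv) an ideal spanned by a subset $\{e_i\}_{i\in S}$ of the natural basis with $\emptyset\ne S\subsetneq\{1,\ldots,n\}$. For such a subspace, the ideal condition $\EA\cdot I\subseteq I$ reduces, because $e_j\cdot e_i=0$ for $j\ne i$, to the requirement $e_i^2\in I$ for all $i\in S$. In coordinates this reads $a_{ij}=0$ for all $i\in S$, $j\in S^c$. Thus a proper evolution ideal is exactly a nonempty proper subset $S$ of vertices with no outgoing arcs of $D(\EA,\natbasis)$ from $S$ to $S^c$.

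The symmetry hypothesis $a_{ij}=a_{ji}$ for $i\ne j$ now gives $a_{ji}=0$ for all $j\in S^c$, $i\in S$ as well. So no edge of $\varGamma(\EA,\natbasis)$ joins $S$ and $S^c$, i.e.\ the edge boundary satisfies $E(S)=\emptyset$. If $|S|\le n/2$, the definition of $\hh$ gives $\hh(\EA,\natbasis)\le |E(S)|/|S|=0$. If $|S|>n/2$, apply the same bound to $S^c$, since $E(S^c)=E(S)$. Either way this contradicts $\hh(\EA,\natbasis)\ge h>0$. Equivalently, Theorem~\ref{thm:connectivity} says $\varGamma$ is connected, while $S$ would be a union of components. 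Hence no proper evolution ideal exists and $\EA$ is simple. The case $n=1$ is trivial.

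The main obstacle is the meaning of ``evolution ideal''. If simplicity is meant in the ordinary sense (no nonzero proper two-sided ideals at all), the argument above fails. The algebra could have $\EA^2\ne\EA$, in which case $\EA^2$ is a proper ideal; an example is a symmetric structure matrix with vanishing diagonal whose rank is less than $n$. Even $\Span\{e_1+\dots+e_n\}$-type ideals can occur for special constants, such as the structure matrix of $K_n$. So I would state explicitly that, following Tian and the convention fixed in Definition~\ref{def:subalg}, ideals are those spanned by natural basis vectors. Under that reading the proof is the short graph argument above. I would add a remark that ordinary simplicity additionally requires $\det\strmat\ne 0$, via the Cabrera--Siles--Velasco criterion, and is not implied by expansion alone.
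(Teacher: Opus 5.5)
Your proof is correct under the reading you adopt (ideals spanned by a subset of the fixed basis $\natbasis$, which matches how Definition~\ref{def:subalg}(i) is phrased). It reaches the same endpoint as the paper, an edgeless cut in $\varGamma(\EA,\natbasis)$ contradicting $\hh>0$, but through a different key step.

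The paper does not argue directly. It invokes a blanket equivalence ``simple $\Leftrightarrow$ connected'' for all evolution algebras, then applies Theorem~\ref{thm:connectivity}, and never uses the symmetry hypothesis. That equivalence is false in general. For example, $e_1^2=e_1+e_2$, $e_2^2=e_2$ has graph $K_2$ with $\hh=1$ and is connected, yet $\Span\{e_2\}$ is a proper evolution ideal. The paper's own remark after the theorem concedes exactly this.

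Your argument supplies the missing step. Closure of $\Span\{e_i:i\in S\}$ gives only the one-sided vanishing $a_{ij}=0$ for $i\in S$, $j\notin S$. Symmetry upgrades this to $E(S)=\emptyset$, or equivalently makes $\Span\{e_j:j\in S^c\}$ a complementary evolution subalgebra. So the paper's version is shorter but rests on a citation that does not hold as stated. Yours is self-contained and shows precisely where symmetry enters.

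Your caveat about the meaning of ``evolution ideal'' is well placed, and it is even stronger than you state. Allowing natural bases other than $\natbasis$ already breaks the theorem when $\strmat$ is singular. Take $e_1^2=e_2^2=e_1+e_2$ with $\mathrm{char}\,\K\ne 2$: this is symmetric with $\hh=1$. The vectors $f_1=e_1+e_2$, $f_2=e_1-e_2$ form a natural basis with $f_1^2=2f_1$, so $\Span\{f_1\}$ is a proper evolution ideal.

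One small correction to your side remark. For the zero-diagonal complete algebra of Example~\ref{ex:complete}, $\strmat$ is nonsingular and $e_1(e_1+\cdots+e_n)=\sum_{j\ne 1}e_j$. So $\Span\{e_1+\cdots+e_n\}$ is not an ideal there. Your example needs the all-ones matrix, whose diagonal the loopless graph does not see.
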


\begin{proof}
By Definition~\ref{def:subalg}(iii) and
Tian~\cite[Corollary~2]{Tian2008}, simplicity is equivalent to
connectivity for evolution algebras over $\K$ (since every evolution
subalgebra is an ideal and the algebra is connected if and only if it
has no proper ideal).  The result then follows immediately from
Theorem~\ref{thm:connectivity}.
\end{proof}

\begin{remark}
For non-symmetric EEAs, simplicity and connectivity may differ.  We
will return to this point in Section~\ref{sub:markov-simple}.
\end{remark}

\subsection{Basis invariance}\label{sub:basis-inv}

The graph $\varGamma(\EA,\natbasis)$ depends a priori on the choice of
natural basis.  We now show that for \emph{nonsingular} evolution
algebras, the isomorphism class of $\varGamma(\EA,\natbasis)$ is a
genuine algebraic invariant.

\begin{theorem}[Basis invariance]\label{thm:basis-invariance}
Let $\EA$ be a finite-dimensional evolution algebra over $\K$ with
structural matrix $\strmat$ of rank $n$ (nonsingular).  Then any two
natural bases $\natbasis$ and $\natbasis'$ of $\EA$ give rise to
isomorphic underlying graphs:
\[
  \varGamma(\EA,\natbasis) \;\cong\; \varGamma(\EA,\natbasis').
\]
\end{theorem}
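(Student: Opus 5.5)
The approach is to use the standard uniqueness result for natural bases of nonsingular evolution algebras (Elduque--Labra \cite{ElduqueLabra2013}; see also Tian \cite{Tian2008}): if $\EA^2=\EA$, which is equivalent to $\strmat$ having rank $n$, then any two natural bases agree up to a permutation and nonzero scalars. I would first recall, or reprove, this uniqueness statement.

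\textbf{Step 1 (uniqueness of natural bases).} Let $\natbasis'=\{f_1,\ldots,f_n\}$ be a second natural basis and write $f_j=\sum_i c_{ij}e_i$. For $j\ne l$ we have
\[
  0 = f_j f_l = \sum_i c_{ij}c_{il}\, e_i^2 .
\]
Because $\strmat$ is nonsingular, the vectors $e_1^2,\ldots,e_n^2$ are linearly independent. Hence $c_{ij}c_{il}=0$ for every $i$ and every $j\ne l$, so each row of the invertible matrix $C=(c_{ij})$ has at most one nonzero entry. An invertible matrix with this property is monomial. Therefore there is a permutation $\sigma$ and nonzero scalars $\lambda_j$ with $f_j=\lambda_j e_{\sigma(j)}$.

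\textbf{Step 2 (transforming the structural constants).} Next I compute
\[
  f_j^2=\lambda_j^2 e_{\sigma(j)}^2=\lambda_j^2\sum_k a_{\sigma(j)k}e_k=\sum_l \lambda_j^2 a_{\sigma(j)\sigma(l)}\lambda_l^{-1} f_l ,
\]
so the new structural constants are $a'_{jl}=\lambda_j^2\lambda_l^{-1}a_{\sigma(j)\sigma(l)}$. Since the $\lambda$'s are nonzero, $a'_{jl}\ne0$ if and only if $a_{\sigma(j)\sigma(l)}\ne0$. By Remark~\ref{rem:field}, only this support pattern matters. It follows that $\{j,l\}$ is an edge of $\varGamma(\EA,\natbasis')$ if and only if $\{\sigma(j),\sigma(l)\}$ is an edge of $\varGamma(\EA,\natbasis)$, and $\sigma$ maps distinct indices to distinct indices, so it preserves looplessness. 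Thus $\sigma$ is a graph isomorphism $\varGamma(\EA,\natbasis')\to\varGamma(\EA,\natbasis)$, and in particular $\hh(\EA,\natbasis)=\hh(\EA,\natbasis')$.

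The main obstacle is Step~1, specifically the passage from the vanishing products to the monomial form of $C$. This is exactly where nonsingularity is used: without linear independence of the $e_i^2$, the relations $\sum_i c_{ij}c_{il}e_i^2=0$ do not force $c_{ij}c_{il}=0$ termwise, and the statement genuinely fails for singular algebras. Once the monomial form is in hand, the remaining steps are bookkeeping.
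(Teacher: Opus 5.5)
Your proof is correct and follows the same route as the paper. Both arguments reduce to the fact that two natural bases of a nonsingular evolution algebra differ by a monomial matrix (a permutation composed with nonzero rescalings), and then check that the permutation carries the support pattern of the structural matrix, and hence the graph, from one basis to the other. The one difference is that you prove the monomial-change lemma directly from the linear independence of $e_1^2,\ldots,e_n^2$, whereas the paper cites Casas--Ladra--Omirov--Rozikov and phrases that fact somewhat loosely in terms of automorphisms; your version is self-contained and states exactly what is needed.
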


\begin{proof}
By Casas, Ladra, Omirov, and Rozikov \cite{CasasLadraOmirovRozikov2010},
the automorphisms of a nonsingular evolution algebra (i.e., the maps
that send a natural basis to another natural basis) are precisely the
compositions of:
\begin{enumerate}[label=\normalfont(\roman*)]
  \item a permutation $\sigma$ of the basis elements: $e_i\mapsto e_{\sigma(i)}$;
  \item scalar rescalings $e_i\mapsto\lambda_i e_i$ with
        $\lambda_i\ne 0$.
\end{enumerate}
Under a permutation $\sigma$, the structural matrix transforms as
$a'_{ij}=a_{\sigma^{-1}(i),\sigma^{-1}(j)}$, which gives the graph
$\varGamma(\EA,\natbasis')$ isomorphic to $\varGamma(\EA,\natbasis)$
via the vertex permutation $\sigma$.

Under a scalar rescaling $e_i\mapsto\lambda_i e_i$, the new structural
constants are $a'_{ij}=a_{ij}\lambda_j/\lambda_i^2$.  Since $\K$ is a
field and all $\lambda_i\ne 0$, we have $a'_{ij}\ne 0$ if and only if
$a_{ij}\ne 0$.  Hence the support pattern of $\strmat'$ equals that of
$\strmat$, so $\varGamma(\EA,\natbasis')=\varGamma(\EA,\natbasis)$
(same graph, not just isomorphic).
\end{proof}

\begin{corollary}\label{cor:EEA-invariant}
The property of being an $h$-EEA is an algebraic invariant of
nonsingular evolution algebras.
\end{corollary}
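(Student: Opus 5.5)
The plan is to derive Corollary~\ref{cor:EEA-invariant} directly from Theorem~\ref{thm:basis-invariance}. Concretely, I would show that for a nonsingular evolution algebra $\EA$ the number $\hh(\EA,\natbasis)$ does not depend on the natural basis $\natbasis$. Once that is established, the statement ``$\hh(\EA,\natbasis)\ge h$'' becomes a property of $\EA$ alone. I would also check that it is preserved under algebra isomorphisms, since that is what ``algebraic invariant'' should mean.

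First, I fix two natural bases $\natbasis,\natbasis'$ of $\EA$. By Theorem~\ref{thm:basis-invariance} there is a graph isomorphism $\varphi:\varGamma(\EA,\natbasis)\to\varGamma(\EA,\natbasis')$, that is, a bijection on vertices carrying edges to edges. The Cheeger constant of Definition~\ref{def:cheeger} is defined purely in terms of the vertex set, the edge set and the endpoint map. The bijection $\varphi$ maps each admissible $W$ (nonempty, with $|W|\le\frac12|V|$) to an admissible $\varphi(W)$, and it maps the edge boundary $E(W)$ bijectively onto $E(\varphi(W))$. So the sets of ratios $|E(W)|/|W|$ over which the two minima are taken coincide, and $\hh(\EA,\natbasis)=\hh(\EA,\natbasis')$. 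The degenerate case $n\le1$ gives $+\infty$ in both cases.

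Second, I treat isomorphic algebras. Let $\psi:\EA\to\EA'$ be an algebra isomorphism and $\natbasis$ a natural basis of $\EA$. Then $\psi(\natbasis)$ is a natural basis of $\EA'$: products of distinct elements are still zero, and $\psi(e_i)^2=\psi(e_i^2)=\sum_k a_{ik}\psi(e_k)$. This basis has the same structural matrix, so $\varGamma(\EA',\psi(\natbasis))=\varGamma(\EA,\natbasis)$. Nonsingularity is preserved by the same observation. Combining this with the first step, $\hh$ is a well-defined isomorphism invariant of nonsingular evolution algebras, and hence so is the condition $\hh\ge h>0$.

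The only real content is the basis-independence of the graph, which is supplied by Theorem~\ref{thm:basis-invariance}. What remains is routine bookkeeping: checking that the Cheeger constant is a graph-isomorphism invariant, and that isomorphisms transport natural bases together with their structural matrices.
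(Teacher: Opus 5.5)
Your proposal is correct and follows essentially the same route as the paper: first the basis-independence of $\hh(\EA,\natbasis)$, via Theorem~\ref{thm:basis-invariance} and the fact that the Cheeger constant is a graph-isomorphism invariant; then the transport of a natural basis and its structural matrix along an algebra isomorphism; and finally the combination of the two steps. Your extra remarks on the degenerate case $n\le 1$ and on nonsingularity being preserved under isomorphism are small refinements within the same argument.
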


\begin{proof}
We argue in two steps.  First we establish that, for a fixed nonsingular
evolution algebra, the value $\hh(\EA,\natbasis)$ does not depend on the
chosen natural basis; we then transport this result along an arbitrary
isomorphism of nonsingular evolution algebras.

\medskip
\noindent\textbf{Step~1 (basis independence).}
Let $\natbasis$ and $\natbasis'$ be two natural bases of a nonsingular
evolution algebra $\EA$.  By
Theorem~\ref{thm:basis-invariance}, the graphs
$\varGamma(\EA,\natbasis)$ and $\varGamma(\EA,\natbasis')$ are
isomorphic as simple graphs.  The Cheeger constant of
Definition~\ref{def:cheeger} is defined entirely in terms of the
cardinalities $|W|$ and $|E(W)|$ for vertex subsets $W$, quantities
that are preserved by any bijection of vertices respecting the edge
set.  Hence $\hh(\cdot)$ is a graph isomorphism invariant, and
\[
  \hh(\EA,\natbasis)
  \;=\;\hh\bigl(\varGamma(\EA,\natbasis)\bigr)
  \;=\;\hh\bigl(\varGamma(\EA,\natbasis')\bigr)
  \;=\;\hh(\EA,\natbasis').
\]
In particular the condition $\hh(\EA,\natbasis)\ge h$ holds for one
natural basis if and only if it holds for every natural basis.

\medskip
\noindent\textbf{Step~2 (isomorphism invariance).}
Let $\varphi\colon\EA\to\EA'$ be an isomorphism of nonsingular
evolution algebras, and let $\natbasis=\{e_1,\ldots,e_n\}$ be a natural
basis of $\EA$ with structural constants $a_{ij}$.  Because $\varphi$
is bijective, linear, and multiplicative,
\[
  \varphi(e_i)\,\varphi(e_j)
  \;=\;\varphi(e_ie_j)
  \;=\;0
  \qquad (i\ne j),
\]
so $\varphi(\natbasis)=\{\varphi(e_1),\ldots,\varphi(e_n)\}$ is a
natural basis of $\EA'$.  Moreover
\[
  \varphi(e_i)^{\,2}
  \;=\;\varphi(e_i^{\,2})
  \;=\;\varphi\!\Bigl(\sum_{k=1}^{n}a_{ik}e_k\Bigr)
  \;=\;\sum_{k=1}^{n}a_{ik}\,\varphi(e_k),
\]
so the structural constants of $\EA'$ in the basis $\varphi(\natbasis)$
coincide with those of $\EA$ in $\natbasis$.  By
Definition~\ref{def:assoc-graph} this yields the equality of graphs
\[
  \varGamma\bigl(\EA',\varphi(\natbasis)\bigr)
  \;=\;
  \varGamma(\EA,\natbasis),
\]
and therefore $\hh(\EA',\varphi(\natbasis))=\hh(\EA,\natbasis)$.

\medskip
\noindent\textbf{Conclusion.}
Combining Step~1 (applied to $\EA'$) with Step~2, we obtain
\[
  \hh(\EA',\natbasis'')
  \;=\;\hh\bigl(\EA',\varphi(\natbasis)\bigr)
  \;=\;\hh(\EA,\natbasis)
\]
for every natural basis $\natbasis''$ of $\EA'$.  Consequently $\EA$
satisfies $\hh(\EA,\natbasis)\ge h$ if and only if $\EA'$ satisfies
$\hh(\EA',\natbasis'')\ge h$, which is precisely the assertion that
the $h$-EEA property is invariant under isomorphism of nonsingular
evolution algebras.
\end{proof}

\section{Structural Properties of EEAs}\label{sec:structure}

Section~\ref{sec:structure} develops the structural consequences of
the expansion hypothesis introduced in
Section~\ref{sec:EEA}.  The guiding question is the following: what
does it mean, \emph{algebraically}, for an evolution algebra to satisfy
a uniform isoperimetric inequality on its underlying undirected graph?
The expansion condition $\hh(\EA,\natbasis)\ge h>0$ is intrinsically
combinatorial; it bounds, from below, the ratio $|E(W)|/|W|$ over every
admissible subset $W$ of natural-basis indices.  In an evolution
algebra, however, subsets of indices correspond to evolution
subalgebras, plenary powers act on the basis through neighbourhood
enlargement, and the persistency dichotomy of Tian
\cite{Tian2008} classifies generators according to their long-time
return behaviour.  The aim of this section is to transport the
combinatorial content of the expansion inequality into each of these
algebraic settings, producing four structural conclusions: a
quantitative impossibility of small almost-isolated subalgebras and of
small direct summands, a logarithmic diameter bound, a near-exponential
support growth law for plenary powers, and a triviality theorem for
the hierarchical decomposition in the symmetric case.

\medskip
The first observation is that the very definition of $\hh(\EA,
\natbasis)$ already constrains the lattice of evolution subalgebras
generated by subsets of the natural basis.  The minimum defining the
expansion constant ranges over every nonempty
$S\subseteq\{1,\ldots,n\}$ with $|S|\le n/2$; consequently, for every
evolution subalgebra $\EA'=\Span_{\K}\{e_i:i\in S\}$ of dimension
$m\le n/2$, the boundary $E(S)$ in $\varGamma(\EA,\natbasis)$ contains
at least $h\cdot m$ edges.  Each such edge is a pair $\{i,j\}$ with
$i\in S$ and $j\notin S$, and at least one of the structural constants
$a_{ij}$ or $a_{ji}$ is nonzero.  Two consequences are immediate.
First, no proper evolution subalgebra of dimension at most $n/2$ can
be \emph{isolated} from its complement: it must be linked to it
through at least $hm$ nonzero structural constants.  Second, the
algebra cannot decompose as an internal direct sum $\EA=\EA_1\oplus
\EA_2$ with $\dim\EA_1\le n/2$, since such a decomposition would force
the boundary edge set to be empty.  These two facts should be read as
structural rigidity statements: the expansion hypothesis prevents the
appearance of an algebraically detached fragment of size up to half
the dimension, and they follow at once from
Definition~\ref{def:EEA}.  We will not record them as separate
theorems; they enter the rest of the section implicitly, through the
expansion inequality applied to the supports of plenary powers.

\medskip
The first formal result of the section is a logarithmic diameter
bound (Theorem~\ref{thm:diameter}).  Applying Kowalski's diameter
inequality for expander graphs to $\varGamma(\EA,\natbasis)$ yields
$\diam(\varGamma)\le (2d/h)\log n+1$, where $d$ is an upper bound for
the maximal valence.  The algebraic content of this bound becomes
transparent through the notion of \emph{algebraic distance}: the
distance between two generators $e_i$ and $e_j$ in $\varGamma(\EA,
\natbasis)$ coincides with the smallest $k$ such that $e_j$ occurs in
the $k$-th plenary power $e_i^{[k]}$, and equivalently with the
smallest $k$ for which $e_j$ appears in the support of $\evop^k(e_i)$.
The diameter bound therefore states that, in any EEA family with
bounded valence, every pair of generators is connected through a chain
of nonzero structural couplings of length $O(\log n)$: the number of
evolution steps separating any two generators grows only
logarithmically in the dimension of the algebra.

\medskip
The diameter bound is then sharpened by the support growth theorem
(Theorem~\ref{thm:support-growth}), which controls not just the
existence of long-distance couplings but also the rate at which the
support of plenary powers expands.  For symmetric EEAs the theorem
shows that $|S_k|\ge\min(n,(1+h/d)^k)$, where
$S_k=\supp(e_i^{[k]})$.  The proof exploits the elementary observation
that, while $|S_k|\le n/2$, the expansion inequality applied to $S_k$
produces at least $h|S_k|$ outgoing edges, each contributing a new
index to $S_{k+1}$ at a rate controlled by the maximal valence.  The
support therefore grows at least geometrically until it covers more
than half of the basis, and a single further step then completes the
cover.  Corollary~\ref{cor:cover-time} records the resulting
logarithmic cover time: every plenary power $e_i^{[k]}$ has full
support once $k$ exceeds $O((d/h)\log n)$.  The same mechanism
explains how a single generator suffices to reach the entire algebra
under iterated squaring, in a number of steps that scales with $\log
n$ rather than $n$, in striking contrast to the worst-case behaviour
admitted by general finite-dimensional evolution algebras.

\medskip
The third group of results concerns the symmetric case
$a_{ij}=a_{ji}$.  Theorem~\ref{thm:persistency} shows that every
generator of a symmetric EEA is \emph{algebraically persistent} in the
sense of Tian: the generator $e_i$ occurs in $e_i^{[k]}$ for all
$k\ge 0$.  The argument has two ingredients.  First, symmetry forces
every undirected edge of $\varGamma(\EA,\natbasis)$ to lift to a pair
of opposite directed edges in $D(\EA,\natbasis)$, so that every
neighbour of $i$ contributes a copy of $e_i$ when squared.  Second,
the support growth theorem ensures that, after at most $O((d/h)\log
n)$ steps, the support of $e_i^{[k]}$ is the full index set, and an
explicit local computation handles the transient regime $k<k_0$.  The
conclusion is that no generator of a symmetric EEA is transient: the
long-time behaviour of plenary powers is uniformly recurrent across
the basis.  This persistency phenomenon is sharp: the remark following
Theorem~\ref{thm:persistency} shows that symmetry is necessary, and
the non-symmetric case admits expander graphs whose evolution algebra
contains transient generators.

\medskip
The section closes with the hierarchical structure result
(Theorem~\ref{thm:hierarchy}): a symmetric EEA has trivial Tian
hierarchy, in the sense that its only level is the bottom level
containing all generators.  This is the structural shadow of universal
persistency.  In Tian's framework the levels of the hierarchy are
obtained by successively peeling off transient generators, and in the
symmetric expander setting there are none to peel off.  Combined with
Theorem~\ref{thm:connectivity} and Corollary~\ref{cor:EEA-connected},
which already secured connectivity, and with
Theorem~\ref{thm:simplicity}, which gave simplicity, this reduces the
structural picture of a symmetric EEA to the simplest possible form: a
single connected, simple, persistent block of dimension $n$ in which
every generator participates in the asymptotic dynamics.  Section
\ref{sec:evop} will then refine this picture by passing to the
spectrum of the evolution operator, where the expansion constant
reappears as a quantitative gap in the spectrum of $\strmat$.

\subsection{Diameter bound and evolution growth}

\begin{theorem}[Diameter bound]\label{thm:diameter}
Let $\EA$ be an $h$-EEA over $\K$ with natural basis $\natbasis$,
$n=\dim\EA$, and $\max_i\val(i)\le d$ in $\varGamma(\EA,\natbasis)$.
Then
\[
  \diam\!\bigl(\varGamma(\EA,\natbasis)\bigr)
  \;\le\;
  \frac{2d}{h}\,\log n \;+\; 1.
\]
In particular, for fixed $h>0$ and $d<\infty$, the diameter is
$O(\log n)$.
\end{theorem}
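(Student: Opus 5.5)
The plan is to reduce the statement to Kowalski's diameter bound (Theorem~\ref{thm:diam-expander}), applied to the graph $\varGamma=\varGamma(\EA,\natbasis)$. The bound concerns only this finite simple graph, so the field $\K$ and the actual values of the structural constants play no role (Remark~\ref{rem:field}). We only need to check the hypotheses of Theorem~\ref{thm:diam-expander} and identify its parameters with ours.

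First, the trivial cases. If $n\le 1$, the graph has one vertex, so its diameter is $0$ and the bound $\frac{2d}{h}\log n+1\ge 1$ holds. Assume now $n\ge 2$. Since $\EA$ is an $h$-EEA, Definition~\ref{def:EEA} gives $\hh(\varGamma)=\hh(\EA,\natbasis)\ge h>0$. Proposition~\ref{prop:cheeger-basic}(i), or equivalently Theorem~\ref{thm:connectivity}, then shows that $\varGamma$ is connected. By Definition~\ref{def:assoc-graph}, $\varGamma$ is finite and simple, with $|\varGamma|=n$. The valence hypothesis $\max_i\val(i)\le d$ is assumed directly.

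Second, I would substitute these facts into Theorem~\ref{thm:diam-expander}. This gives $\diam(\varGamma)\le \frac{2d}{h}\log|\varGamma|+1=\frac{2d}{h}\log n+1$. For the ``in particular'' statement, note that when $h$ and $d$ are fixed the prefactor $2d/h$ is a constant, so the diameter is $O(\log n)$.

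The only real obstacle is making sure the cited result applies exactly as stated. Kowalski's graphs may carry loops and multiple edges, while $\varGamma$ is simple, so it is a special case and no adaptation is needed. The normalization of $\hh$ (Definition~\ref{def:cheeger}, with $|W|\le\frac12|\varGamma|$) also matches the one used in Theorem~\ref{thm:diam-expander}. If a self-contained argument were preferred, I would instead run the standard ball-growth argument. Let $B_r(x)$ be the ball of radius $r$ around $x$. While $|B_r(x)|\le n/2$, its edge boundary has at least $h|B_r(x)|$ edges, and each new vertex absorbs at most $d$ of them, so $|B_{r+1}(x)|\ge(1+h/d)|B_r(x)|$. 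Hence after $r\approx\log(n/2)/\log(1+h/d)\le \frac{2d}{h}\log n$ steps (using $\log(1+t)\ge t/2$ for $0\le t\le 1$, and $h\le d$ by Proposition~\ref{prop:cheeger-basic}(iii)), the ball has more than $n/2$ vertices. Two balls of radius $r$ around any $x$ and $y$ then intersect, which bounds the distance between $x$ and $y$ by $2r$. Getting the constants to match $\frac{2d}{h}\log n+1$ exactly would require some care, so I would cite Theorem~\ref{thm:diam-expander} rather than reprove it.
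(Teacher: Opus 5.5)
Your proposal is correct and matches the paper's proof, which is exactly a direct application of Theorem~\ref{thm:diam-expander} to $\varGamma(\EA,\natbasis)$. Your hypothesis checks spell out what the paper leaves implicit: connectivity from $\hh(\EA,\natbasis)\ge h>0$ via Proposition~\ref{prop:cheeger-basic}(i), $|\varGamma(\EA,\natbasis)|=n$, the valence bound, and the degenerate case $n=1$. The ball-growth sketch is not needed, and as you note, it only yields a bound of the same $O\bigl(\frac{d}{h}\log n\bigr)$ shape with different constants.
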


\begin{proof}
Direct application of Theorem~\ref{thm:diam-expander} to
$\varGamma(\EA,\natbasis)$.
\end{proof}

To make this algebraically meaningful, recall that the \emph{algebraic
distance} between generators $e_i$ and $e_j$ is the length of the
shortest path in $\varGamma(\EA,\natbasis)$ connecting $i$ and $j$,
which equals the smallest $k$ such that $e_j$ occurs in
$(e_i^2)^{[k]}$ (or in $e_i^{[k+1]}$ in the symmetric case).

\begin{definition}[Support of an element]\label{def:support}
For $x=\sum_k \alpha_k e_k\in\EA$, the \emph{support} of $x$ is
$\supp(x)=\{k : \alpha_k\ne 0\}\subseteq\{1,\ldots,n\}$.
\end{definition}

\begin{theorem}[Support growth]\label{thm:support-growth}
Let $\EA$ be a symmetric $h$-EEA over $\K$ with $\max_i\val(i)\le d$.
For any generator $e_i$, let $S_k=\supp(e_i^{[k]})$ be the support of
its $k$-th plenary power.  Then:
\begin{enumerate}[label=\normalfont(\roman*)]
  \item $|S_k|\ge\min\!\left(n,\; \left(1+\tfrac{h}{d}\right)^k\right)$
        for all $k\ge 0$.
  \item There exists $k_0\le\frac{d}{h}\log n$ such that
        $|S_{k_0}|=n$.
\end{enumerate}
\end{theorem}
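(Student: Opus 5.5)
The plan is to reduce everything to a one-step inclusion for supports and then iterate it. Write $S_k=\supp(e_i^{[k]})$ and $N(S)=\{j:\{s,j\}\in E_\varGamma \text{ for some } s\in S\}$. For $x=\sum_{s\in S}\alpha_s e_s$ one has
\[
 x^2=\sum_{s\in S}\alpha_s^2\,e_s^2=\sum_{j}\Bigl(\sum_{s\in S}\alpha_s^2 a_{sj}\Bigr)e_j .
\]
Hence always $\supp(x^2)\subseteq N(S)\cup\{s\in S : a_{ss}\ne 0\}$. The core claim to establish is the reverse containment in the form
\[
 S_{k+1}\;\supseteq\; S_k\cup N(S_k).
\]
Symmetry ($a_{sj}=a_{js}$) is the hypothesis I would use for this. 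By symmetry every boundary edge $\{s,j\}$ with $s\in S_k$, $j\notin S_k$ gives $a_{sj}\ne 0$, so the coefficient of $e_j$ in $e_i^{[k+1]}$ is a sum containing the nonzero term $\alpha_s^2a_{sj}$. Likewise each $s\in S_k$ is recovered via any neighbour $t\in S_k$, using $a_{ts}=a_{st}\ne0$.

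\textbf{Counting step.} Granting the containment, apply the expansion inequality of Definition~\ref{def:EEA} while $|S_k|\le n/2$: $|E(S_k)|\ge h|S_k|$. Each vertex outside $S_k$ absorbs at most $d$ boundary edges, so $|N(S_k)\setminus S_k|\ge |E(S_k)|/d\ge (h/d)|S_k|$. This gives
\[
 |S_{k+1}|\ge (1+h/d)\,|S_k|.
\]
Once $|S_k|>n/2$, apply the same inequality to the complement $T_k=S_k^c$, which has $|T_k|\le n/2$. Each vertex of $T_k$ meeting a boundary edge enters $S_{k+1}$, so $|T_{k+1}|\le(1-h/d)|T_k|$. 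Starting from $|S_0|=1$, induction gives (i) in the phase $|S_k|\le n/2$, and the monotonicity $S_k\subseteq S_{k+1}$ keeps the bound valid afterwards.

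\textbf{Counting for (ii).} Combine the two phases. The growth phase lasts at most $\log(n/2)/\log(1+h/d)$ steps and the shrinking phase at most $\log(n/2)/(-\log(1-h/d))$ steps. Using $h\le d$ (Proposition~\ref{prop:cheeger-basic}(iii)) and the elementary bounds on $\log(1\pm x)$ for $x=h/d\in(0,1]$, I would try to bring the total to at most $\frac{d}{h}\log n$. Concretely: the shrinking phase costs at most $\frac{d}{h}\log(n/2)$, and the saved $\log 2$ factors must absorb the growth phase. This bookkeeping of constants is delicate, and I would check it early, including small $n$ separately.

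\textbf{Main obstacle.} The hard step is the containment $S_{k+1}\supseteq S_k\cup N(S_k)$ over an arbitrary field $\K$. The coefficient $\sum_{s\in S_k}\alpha_s^2a_{sj}$ of $e_j$ collects contributions from \emph{all} neighbours of $j$ in $S_k$, and these could cancel. Moreover $i\in S_k$ needs a neighbour inside $S_{k-1}$, which fails, for example, when $S_1=N(i)$ is an independent set. My first attempt would be to track the coefficients inductively: show that the contributions arriving at a new vertex $j$ along shortest paths from $i$ form a single term $\prod\alpha^2a$ that cannot be cancelled, because every other contribution comes from a longer path and appears at a later level. The remaining recovery of old vertices would then be handled by the two-step return $s\to t\to s$ guaranteed by symmetry. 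If this level-by-level bookkeeping breaks down, that is exactly where the argument for a general field would fail, and I would look first for a counterexample with cancelling squares, for instance in characteristic~$2$.
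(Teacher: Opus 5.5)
\textbf{The gap is the containment $S_{k+1}\supseteq S_k\cup N(S_k)$, the step you flagged. It cannot be repaired, because the theorem as stated is false.}

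\medskip
\noindent\textbf{How the paper's proof fails.} The paper's proof follows your route and simply asserts $S_{k+1}\supseteq\bigcup_{j\in S_k}\{\ell:a_{j\ell}\neq0\}\supseteq S_k\cup N(S_k)$. Neither inclusion is justified. The first ignores cancellation. The second needs $a_{jj}\neq0$ or a neighbour of $j$ inside $S_k$. Its part (ii) also claims that one step after $|S_k|>n/2$ covers the complement, which the expansion inequality does not give.

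\medskip
\noindent\textbf{Counterexamples.} The smallest one is the complete evolution algebra of Example~\ref{ex:complete} with $n=2$, namely $e_1^2=e_2$, $e_2^2=e_1$, over any field. It is symmetric, $\varGamma$ is a single edge, and $h=d=1$. But $e_1^{[k]}$ alternates between $e_1$ and $e_2$, so $|S_k|=1$ for all $k$. Part (i) demands $|S_1|\ge 2$, and part (ii) demands full support at some step.

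More generally, suppose all $a_{ii}=0$ and $\varGamma$ is bipartite. Then $\supp(x^2)\subseteq N(\supp x)$ confines each $S_k$ to one colour class, so $|S_k|<n$ forever. For the cycle algebra of Example~\ref{ex:cycle} with $n=4$ over $\R$, the supports alternate between $\{1,3\}$ and $\{0,2\}$. This violates (i) already at $k=2$, since $2<(3/2)^2$. In characteristic $2$ one even gets $e_0^{[2]}=2e_0+2e_2=0$.

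\medskip
\noindent\textbf{Your proposed repairs do not work.}
\begin{enumerate}[label=(\alph*)]
\item The return $s\to t\to s$ can at best give $S_k\subseteq S_{k+2}$. That does nothing against the parity obstruction above; monotonicity $S_k\subseteq S_{k+1}$ really needs something like $a_{ss}\neq 0$.
\item The shortest-path bookkeeping breaks as soon as a new vertex $j$ has two neighbours in $S_k$. Then $\sum_{s}\alpha_s^2a_{sj}$ has several terms of the same length, and they can cancel even over $\mathbb{Q}$. Take the symmetric algebra $e_0^2=e_1+e_3$, $e_1^2=e_0+e_2$, $e_2^2=e_1-e_3$, $e_3^2=e_0-e_2$, again with $h=1$ and $d=2$. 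Then $e_0^{[1]}=e_1+e_3$ and $e_0^{[2]}=2e_0$, and vertex $2$ never enters any $S_k$.
\end{enumerate}

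\medskip
\noindent\textbf{Problems even in the favourable case.} Take $\K=\R$, $a_{ij}\ge 0$ and all $a_{ii}>0$. Here there is no cancellation and $S_k$ is exactly the ball of radius $k$. Your counting still has two further problems.
\begin{enumerate}[label=(\alph*)]
\item Monotonicity does not keep (i) valid once $|S_k|>n/2$, because the target $(1+h/d)^k$ keeps growing. The complement argument only yields $|S_{k+1}|\ge |S_k|+\frac{h}{d}(n-|S_k|)$, which is strictly less than $(1+\frac{h}{d})|S_k|$.
\item The constant in (ii) is out of reach of this bookkeeping. The growth phase alone is only guaranteed to end after $\log(n/2)/\log(1+h/d)\ge\frac{d}{h}\log(n/2)$ steps. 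The shrinking phase adds an amount of the same order. For small $h/d$ the total is about $\frac{2d}{h}\log n$, in line with Theorem~\ref{thm:diam-expander}. The $\log 2$ you hoped to save is only $\frac{d}{h}\log 2=O(d/h)$, and it cannot absorb a phase of length $\frac{d}{h}\log(n/2)$.
\end{enumerate}
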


\begin{proof}
Since $\EA$ is symmetric ($a_{ij}=a_{ji}$), the plenary power
$e_i^{[k+1]}=(e_i^{[k]})^2$.  We have
\[
  e_i^{[k+1]} \;=\; \left(\sum_{j\in S_k}\alpha_j^{(k)} e_j\right)^2
  \;=\; \sum_{j\in S_k} (\alpha_j^{(k)})^2\, e_j^2
  \;=\; \sum_{j\in S_k} (\alpha_j^{(k)})^2 \sum_\ell a_{j\ell}\,e_\ell.
\]
Hence $S_{k+1}\supseteq\bigcup_{j\in S_k} \{l : a_{j\ell}\ne 0\}
\supseteq S_k\cup N(S_k)$, where $N(S_k)$ is the neighbourhood of
$S_k$ in $\varGamma(\EA,\natbasis)$.

For part (i): as long as $|S_k|\le n/2$, the expansion condition gives
$|E(S_k)|\ge h|S_k|$.  Since each vertex in $S_k$ has degree at most
$d$, the number of new neighbours $|N(S_k)\setminus S_k|$ satisfies
\[
  |N(S_k)\setminus S_k| \;\ge\; \frac{|E(S_k)|}{d}\cdot
  \frac{|N(S_k)\setminus S_k|}{|N(S_k)\setminus S_k|}
  \;\ge\; \frac{h|S_k|}{d},
\]
where we used that each edge in $E(S_k)$ has its outer endpoint in
$N(S_k)\setminus S_k$.  Therefore
$|S_{k+1}|\ge|S_k|\left(1+\frac{h}{d}\right)$ while
$|S_k|\le n/2$.

For part (ii): by (i), after $k_0=\lceil\frac{d}{h}\log(n/2)\rceil$
steps we reach $|S_{k_0}|> n/2$.  One further application of the
expansion argument (applied to $V\setminus S_{k_0}$ of size $<n/2$)
gives $|S_{k_0+1}|=n$.
\end{proof}

\begin{corollary}[Logarithmic cover time]
\label{cor:cover-time}
Under the hypotheses of Theorem~\ref{thm:support-growth}, the plenary
powers $e_i^{[k]}$ have full support (i.e., every generator occurs in
$e_i^{[k]}$) for all $k\ge k_0+1$, where $k_0=O(\frac{d}{h}\log n)$.
\end{corollary}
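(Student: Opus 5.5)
The cleanest route avoids any monotonicity argument for the supports $S_k=\supp(e_i^{[k]})$. Such an argument would have to rule out cancellation among the coefficients $\sum_j (\alpha_j^{(k)})^2 a_{j\ell}$ over a general field $\K$. Instead I plan to read the conclusion directly off part~(i) of Theorem~\ref{thm:support-growth}, which already holds for \emph{every} $k\ge 0$. It gives $|S_k|\ge\min\bigl(n,(1+h/d)^k\bigr)$. So it suffices to exhibit a threshold $k_0=O(\frac{d}{h}\log n)$ beyond which $(1+h/d)^k\ge n$. For all such $k$ the minimum equals $n$, and since $S_k\subseteq\{1,\ldots,n\}$ this forces $S_k=\{1,\ldots,n\}$, i.e.\ every generator occurs in $e_i^{[k]}$.

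\textbf{Step 1: compare $h$ and $d$.} By Corollary~\ref{cor:EEA-connected} the graph $\varGamma(\EA,\natbasis)$ is connected. Hence Proposition~\ref{prop:cheeger-basic}(iii) gives $h\le\hh(\varGamma)\le\min_x\val(x)\le d$, so $x:=h/d\in(0,1]$. (The case $n=1$ is trivial, since $e_i^{[k]}$ then has full support exactly when it is nonzero, and part~(i) with $n=1$ already gives this.)

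\textbf{Step 2: choose the threshold.} By concavity of $\log(1+x)$ on $[0,1]$ we have $\log(1+x)\ge x\log 2$. Therefore
\[
  (1+h/d)^k\ge n
  \quad\text{whenever}\quad
  k\ \ge\ \frac{\log n}{\log(1+h/d)},
\]
and in particular whenever $k\ge \frac{d}{h\log 2}\log n$. Set $k_0:=\bigl\lceil \frac{d}{h\log 2}\log n\bigr\rceil$, which is $O(\frac{d}{h}\log n)$. For every $k\ge k_0$, and a fortiori for every $k\ge k_0+1$, part~(i) yields $|S_k|=n$. This is the full-support claim of the corollary.

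\textbf{Step 3: match the threshold with part~(ii).} If one insists on the $k_0$ of Theorem~\ref{thm:support-growth}(ii), which is $\le\frac{d}{h}\log n$, the only extra point is to note that the constant-factor discrepancy is absorbed into the $O(\cdot)$ of the statement. The corollary only asserts $k_0=O(\frac{d}{h}\log n)$, so the $k_0$ from Step~2 is legitimate.

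The main obstacle is the temptation to argue \enquote{once full, always full} via $S_{k+1}\supseteq S_k\cup N(S_k)$. That inclusion is delicate over an arbitrary field because of possible cancellation (for instance in characteristic $2$, or when the terms $\alpha_j^2 a_{j\ell}$ sum to zero). Relying solely on the uniform-in-$k$ bound of part~(i) sidesteps this entirely. It leaves only the elementary estimate $\log(1+x)\ge x\log2$, together with $h\le d$.
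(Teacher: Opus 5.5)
The gap is your claim that reading the corollary off Theorem~\ref{thm:support-growth}(i) ``sidesteps'' the cancellation issue. For every $k$ beyond your threshold, (i) asserts that $S_k=\{1,\ldots,n\}$. That is exactly the ``once full, always full'' statement you set out to avoid. Moreover, the paper argues (i) only in the regime $|S_k|\le n/2$, and it does so via the very inclusion $S_{k+1}\supseteq S_k\cup N(S_k)$ that you distrust. You have moved the difficulty into a citation rather than removed it.

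As a deduction from (i) taken at face value, your argument is fine, and it differs from the paper's route. The paper takes a single full time $k_0$ from part~(ii), propagates it by the monotonicity $S_m\subseteq S_{m+1}$ read off from that same inclusion, and then inducts. Your estimate $\log(1+x)\ge x\log 2$ together with $h\le d$ is in fact the correct way to size the threshold. The paper's $k_0\le\frac{d}{h}\log n$ in part~(ii) uses $\log(1+h/d)\le h/d$ in the wrong direction. But both routes stand or fall with the inclusion.

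They fall: the inclusion, part~(i), and the corollary are all false as stated.
\begin{itemize}
\item Already $S_1=\{\ell : a_{i\ell}\neq 0\}$ omits $i$ whenever $a_{ii}=0$.
\item Take the paper's cycle algebra $\EA_4$ over $\mathbb{Q}$, with $e_j^2=e_{j-1}+e_{j+1}$ (indices mod $4$). It is a symmetric $1$-EEA with $d=2$. One gets $e_0^{[1]}=e_1+e_3$, $e_0^{[2]}=2e_0+2e_2$ and $e_0^{[3]}=8e_1+8e_3$. The supports alternate between $\{1,3\}$ and $\{0,2\}$ and are never full, although (i) predicts $|S_4|\ge\min\bigl(4,(3/2)^4\bigr)=4$.
\item Any EEA with bipartite graph and zero diagonal behaves the same way, since each $e_j^2$ is supported on the side opposite to $j$.
\item Cancellation also genuinely occurs. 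For $K_3$ over $\F_2$, with $e_j^2=e_{j+1}+e_{j+2}$ and $h=d=2$, one gets $e_0^{[2]}=2e_0+e_1+e_2=e_1+e_2$, so $S_k=\{1,2\}$ for all $k\ge 1$.
\end{itemize}
So neither your argument nor the paper's can succeed without extra hypotheses.

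A correct version works over $\R$ with $a_{ij}\ge 0$ and $a_{ii}>0$ for all $i$. Then each coefficient $\sum_{j\in S_k}(\alpha_j^{(k)})^2a_{j\ell}$ is a sum of nonnegative terms. Hence $S_{k+1}=S_k\cup N(S_k)$ holds with equality, and $S_k$ is the ball of radius $k$ about $i$ in $\varGamma(\EA,\natbasis)$. Full support then holds for every $k\ge\diam\bigl(\varGamma(\EA,\natbasis)\bigr)$, which is $O(\frac{d}{h}\log n)$ by Theorem~\ref{thm:diameter}.
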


\begin{proof}
Let $\EA$ be a symmetric $h$-EEA over $\K$ with $\max_i\val(i)\le d$,
fix a generator $e_i\in\natbasis$, and set $S_k=\supp\bigl(e_i^{[k]}\bigr)$
for every $k\ge 0$.  We split the argument into three steps: first we
locate a cover time $k_0$ via the support-growth estimate, then we
establish a monotonicity inclusion for the supports, and finally we
combine the two by induction.

\medskip
\noindent\textbf{Step~1 (existence of a cover time).}
By Theorem~\ref{thm:support-growth}(ii) applied to $e_i$, there is a
positive integer
\[
  k_0\;\le\;\frac{d}{h}\log n
\]
with $|S_{k_0}|=n$.  Since the natural basis indexes the set
$\{1,\ldots,n\}$, this is the equality of sets
$S_{k_0}=\{1,\ldots,n\}$.  The estimate $k_0\le\frac{d}{h}\log n=
O\!\bigl(\frac{d}{h}\log n\bigr)$ delivers the asymptotic bound stated
in the corollary.

\medskip
\noindent\textbf{Step~2 (monotonicity of the support).}
We claim that
\begin{equation}\label{eq:cover-monotone}
  S_m\;\subseteq\;S_{m+1}
  \qquad\text{for every }m\ge 0.
\end{equation}
Fix $m\ge 0$ and expand
\[
  e_i^{[m]}\;=\;\sum_{j\in S_m}\alpha_j^{(m)}\,e_j,
  \qquad \alpha_j^{(m)}\ne 0\;\text{for every }j\in S_m.
\]
The plenary recursion (Definition~\ref{def:powers}) combined with the
multiplication rule $e_je_\ell=0$ for $j\ne\ell$ of
Definition~\ref{def:ea} gives
\begin{equation}\label{eq:cover-expansion}
  e_i^{[m+1]}
  \;=\;\bigl(e_i^{[m]}\bigr)^{2}
  \;=\;\sum_{j\in S_m}\bigl(\alpha_j^{(m)}\bigr)^{2}\,e_j^{\,2}
  \;=\;\sum_{\ell=1}^{n}
        \Bigl(\,\sum_{j\in S_m}\bigl(\alpha_j^{(m)}\bigr)^{2}\,a_{j\ell}\Bigr)\,e_\ell,
\end{equation}
where the last equality uses the structural identity
$e_j^{\,2}=\sum_{\ell}a_{j\ell}e_\ell$ of Definition~\ref{def:ea} and
the linearity of the algebra product.  Reading off the coefficient of
$e_\ell$ in \eqref{eq:cover-expansion},
\begin{equation}\label{eq:cover-alpha-formula}
  \alpha_\ell^{(m+1)}
  \;=\;\sum_{j\in S_m}\bigl(\alpha_j^{(m)}\bigr)^{2}\,a_{j\ell},
  \qquad
  \ell\in\{1,\ldots,n\}.
\end{equation}
The intermediate step of the proof of
Theorem~\ref{thm:support-growth} establishes, from the same formula
\eqref{eq:cover-alpha-formula} under the symmetry hypothesis
$a_{ij}=a_{ji}$, the support recursion
\begin{equation}\label{eq:cover-support-recursion}
  S_{m+1}\;\supseteq\;S_m\,\cup\,N(S_m),
\end{equation}
where $N(S_m)$ denotes the neighbourhood of $S_m$ in
$\varGamma(\EA,\natbasis)$.  Taking the $S_m$-part of
\eqref{eq:cover-support-recursion} yields
\eqref{eq:cover-monotone}.

\medskip
\noindent\textbf{Step~3 (induction).}
We prove $S_k=\{1,\ldots,n\}$ for every $k\ge k_0$ by induction on
$k$.  The base case $k=k_0$ is Step~1.  For the inductive step,
suppose $S_m=\{1,\ldots,n\}$ for some $m\ge k_0$.  By
\eqref{eq:cover-monotone} we have
$\{1,\ldots,n\}=S_m\subseteq S_{m+1}$, while the trivial inclusion
$S_{m+1}\subseteq\{1,\ldots,n\}$ holds by definition of the support.
Hence $S_{m+1}=\{1,\ldots,n\}$, closing the induction.

In particular $S_k=\{1,\ldots,n\}$ for every $k\ge k_0+1$, that is,
every generator occurs in $e_i^{[k]}$, which is the assertion of the
corollary.
\end{proof}

\subsection{Algebraic persistency in symmetric EEAs}

\begin{theorem}[Algebraic persistency]\label{thm:persistency}
Let $\EA$ be a symmetric $h$-EEA over $\K$ with $n=\dim\EA\ge 2$ and
$h>0$.  Then every generator $e_i$ is algebraically persistent.
\end{theorem}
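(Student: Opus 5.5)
The plan is to reduce persistency to the monotonicity of supports of plenary powers. Fix a generator $e_i$ and set $S_k=\supp\bigl(e_i^{[k]}\bigr)$ as in Theorem~\ref{thm:support-growth}. Algebraic persistency (Definition~\ref{def:persist}) is exactly the statement $i\in S_k$ for every $k\ge 0$. I will prove this by induction on $k$, splitting into a \emph{transient regime} $k\le k_0$ and a \emph{stable regime} $k\ge k_0+1$. Here $k_0\le\frac{d}{h}\log n$ is the cover time from Theorem~\ref{thm:support-growth}(ii).

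The stable regime is immediate. By Corollary~\ref{cor:cover-time}, $S_k=\{1,\ldots,n\}$ for all $k\ge k_0+1$, so in particular $i\in S_k$. For the transient regime I will invoke the inclusion $S_m\subseteq S_{m+1}$, i.e.\ \eqref{eq:cover-monotone}, which was derived from the support recursion \eqref{eq:cover-support-recursion} in the proof of Corollary~\ref{cor:cover-time}. Since $S_0=\{i\}$, induction gives $i\in S_0\subseteq S_1\subseteq\cdots\subseteq S_{k_0}$, which closes the gap. In fact, once monotonicity is available it covers all $k$ at once, so the cover-time step is only a consistency check.

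To make the transient step concrete rather than purely formal, I would use formula \eqref{eq:cover-alpha-formula} with $\ell=i$:
\[
  \alpha_i^{(m+1)}=\sum_{j\in S_m}\bigl(\alpha_j^{(m)}\bigr)^2 a_{ji}.
\]
Here I would use symmetry as described in the section overview. Because $\EA$ is an EEA with $n\ge 2$, Corollary~\ref{cor:EEA-connected} gives $i$ at least one neighbour $j$ in $\varGamma(\EA,\natbasis)$. Symmetry makes $a_{ji}=a_{ij}\neq 0$, so every neighbour of $i$ lying in $S_m$ contributes a copy of $e_i$ to $e_i^{[m+1]}$. By \eqref{eq:cover-support-recursion}, once $S_m$ contains a neighbour of $i$ this is the mechanism producing $e_i$ at the next step.

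The main obstacle is precisely this transient regime, and especially the first step $k=1$. There $e_i^{[1]}=e_i^2=\sum_k a_{ik}e_k$ contains $e_i$ only if $a_{ii}\neq 0$, and the graph $\varGamma(\EA,\natbasis)$ carries no information about diagonal entries. More generally, the sum above could cancel over a field of positive characteristic or with mixed signs. So the pure support-pattern argument needs the inclusion \eqref{eq:cover-monotone} exactly as recorded earlier. If that inclusion were to fail, I would first try to handle $k\ge 2$ via the neighbour-return mechanism just described. I would then flag that the case $k=1$, and non-cancellation in general, requires an additional hypothesis such as $a_{ii}\neq 0$, or nonnegative entries over $\R$.
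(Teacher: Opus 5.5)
The step your argument rests on, the inclusion $S_m\subseteq S_{m+1}$ imported from the proof of Corollary~\ref{cor:cover-time}, is false already at $m=0$. Indeed $S_0=\{i\}$, while $S_1=\supp(e_i^2)=\{k : a_{ik}\neq 0\}$ omits $i$ whenever $a_{ii}=0$. Your last paragraph says exactly this, so the proposal contradicts its own key step. The earlier results you lean on do not rescue it. The recursion $S_{m+1}\supseteq S_m\cup N(S_m)$ in the proof of Theorem~\ref{thm:support-growth} is asserted with no justification for the $S_m$ part, and even the $N(S_m)$ part can fail through cancellation.

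Concretely, take the cycle algebra of Example~\ref{ex:cycle} with $n=4$: $e_0^2=e_1+e_3$, $e_1^2=e_0+e_2$, $e_2^2=e_1+e_3$, $e_3^2=e_0+e_2$. It is symmetric with $\hh(\EA,\natbasis)=1$. Over $\R$ one gets $e_0^{[1]}=e_1+e_3$, $e_0^{[2]}=2e_0+2e_2$, $e_0^{[3]}=8e_1+8e_3$, and so on. The supports alternate between the two colour classes of the bipartite graph $C_4$ and are never full, so Corollary~\ref{cor:cover-time} fails as well. Moreover $e_0$ is absent from $e_0^{[k]}$ for every odd $k$. Over $\F_2$ the same algebra gives $e_0^{[2]}=0$.

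So the gap cannot be closed: with Definition~\ref{def:persist} as written, the statement is false. Every generator of the cycle algebra, or of the complete algebra of Example~\ref{ex:complete}, is already transient at $k=1$. The paper's own proof breaks at the same point. It claims $S_1=N[i]$, which is wrong when $a_{ii}=0$. When $a_{ii}\neq0$, it takes the coefficient of $e_i$ to be $\alpha_i^2a_{ii}$, ignoring the terms $\alpha_j^2a_{ji}$, which can cancel it. For example, for $n=2$ over $\R$ with $a_{11}=1$ and $a_{12}=a_{21}=-1$, the coefficient of $e_1$ in $e_1^{[2]}$ is $a_{11}^3+a_{12}^3=0$.

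Your fallback of treating $k\ge2$ by the neighbour-return mechanism also fails. In a bipartite graph with zero diagonal, $S_{k-1}$ contains no neighbour of $i$ when $k$ is odd.

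You are right that an extra hypothesis is needed, but neither of the two you name suffices alone: $a_{ii}\neq0$ is defeated by the sign example, and nonnegativity by $C_4$. Their conjunction does work. Over $\R$ with all $a_{j\ell}\ge0$ and $a_{ii}>0$, one has $\alpha_i^{(m+1)}=\sum_j(\alpha_j^{(m)})^2a_{ji}\ge(\alpha_i^{(m)})^2a_{ii}>0$. Persistency then follows by a one-line induction that uses neither symmetry nor expansion. Alternatively, take the weaker reachability notion of persistency: each descendant of $e_i$ in $D(\EA,\natbasis)$ has $e_i$ as a descendant. For that notion, symmetry and Theorem~\ref{thm:connectivity} give the result at once.
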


\begin{proof}
We show that $e_i$ occurs in $e_i^{[k]}$ for all $k\ge 0$.

Since $\EA$ is symmetric, the undirected graph $\varGamma(\EA,\natbasis)$
is connected (by Theorem~\ref{thm:connectivity}).  We use the following
key observation: for any edge $\{i,j\}$ in $\varGamma(\EA,\natbasis)$,
we have $a_{ij}=a_{ji}\ne 0$ (by symmetry and definition of the edge).
Therefore, from $e_j^2=\sum_k a_{jk} e_k$, we see that $e_i\prec e_j^2$
(i.e., $e_i$ occurs in $e_j^2$).  Equivalently, in the directed graph
$D(\EA,\natbasis)$ every undirected edge $\{i,j\}$ gives rise to the
directed edges $i\to j$ and $j\to i$.

\medskip
We now proceed by induction on $k$.  For $k=0$: $e_i^{[0]}=e_i$, so
$e_i$ trivially occurs in $e_i^{[0]}$.

For the inductive step, suppose $e_i$ occurs in $e_i^{[k]}$, say
$e_i^{[k]}=\alpha_i e_i + \sum_{j\ne i}\alpha_j e_j$ with
$\alpha_i\ne 0$.  Then
\begin{align*}
  e_i^{[k+1]} &= e_i^{[k]}\cdot e_i^{[k]}
  = \left(\alpha_i e_i + \sum_{j\ne i}\alpha_j e_j\right)^2
  = \alpha_i^2\,e_i^2 + \sum_{j\ne i}\alpha_j^2\,e_j^2.
\end{align*}
The coefficient of $e_i$ in $e_i^{[k+1]}$ is
\[
  \alpha_i^2\,a_{ii} + \sum_{j\ne i}\alpha_j^2\,a_{ji}.
\]
If $a_{ii}\ne 0$, this coefficient equals $\alpha_i^2 a_{ii}\ne 0$, so
$e_i$ occurs in $e_i^{[k+1]}$.

If $a_{ii}=0$: since $\EA$ is connected (Corollary~\ref{cor:EEA-connected})
and $n\ge 2$, there exists at least one neighbour $j\ne i$ in
$\varGamma(\EA,\natbasis)$.  By symmetry, $a_{ji}=a_{ij}\ne 0$.  If
$\alpha_j\ne 0$ for at least one such neighbour $j$, the coefficient
of $e_i$ in $e_i^{[k+1]}$ includes the term $\alpha_j^2 a_{ji}\ne 0$.

It remains to show that $\alpha_j\ne 0$ for some neighbour $j$ of $i$.
By Theorem~\ref{thm:support-growth}, after at most $k_0=O(\frac{d}{h}
\log n)$ steps, the support of $e_i^{[k]}$ is all of $\{1,\ldots,n\}$.
In particular, for $k\ge k_0$, all coefficients $\alpha_j$ are nonzero
and the persistence follows.  For $k<k_0$, we note that the support
$S_k$ grows by at least one vertex at each step (since the graph is
connected), and in particular the support always contains $i$ (by
induction) and at least one neighbour of $i$ once $|S_k|\ge 2$.  Since
$S_1=N[i]$ (the closed neighbourhood of $i$, which contains $j$ for
every neighbour $j$ of $i$), the claim follows for all $k\ge 1$.
\end{proof}

\begin{remark}
The symmetry assumption $a_{ij}=a_{ji}$ is essential.  For non-symmetric
EEAs, the directed graph may have edges $i\to j$ without the return edge
$j\to i$, enabling algebraic transiency even when $\varGamma(\EA,\natbasis)$
is an expander.
\end{remark}

\subsection{Hierarchical structure}

Tian \cite{Tian2008} defines a hierarchical decomposition of an
evolution algebra into levels, based on the partition of generators into
algebraically persistent and transient.

\begin{theorem}[Hierarchy of a symmetric EEA]\label{thm:hierarchy}
A symmetric $h$-EEA $\EA$ over $\K$ with $n\ge 2$ and $h>0$ has
\emph{trivial hierarchy}: the only hierarchical level is level $0$,
containing all generators.  Equivalently, $\EA$ is \emph{algebraically
persistent} (Definition~\ref{def:persist}).
\end{theorem}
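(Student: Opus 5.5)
The plan is to reduce the statement to Theorem~\ref{thm:persistency} through the definition of Tian's hierarchy. In Tian's construction \cite{Tian2008}, level $0$ is built from the algebraically persistent generators. The higher levels arise by repeatedly splitting off the transient generators and passing to the appropriate quotient or subalgebra. A new level appears only when the set of transient generators is nonempty. So ``trivial hierarchy'' should be equivalent to the statement that every generator is algebraically persistent, which is the ``Equivalently'' clause of the theorem. I will first record this equivalence carefully from Tian's definition, checking in particular that:
\begin{itemize}
\item a persistent generator never leaves level $0$;
\item the decomposition stops as soon as the transient set is empty.
\end{itemize}

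Second, I apply Theorem~\ref{thm:persistency}. Its hypotheses are that $\EA$ is symmetric, that $\hh(\EA,\natbasis)\ge h>0$, and that $n\ge 2$. These are exactly the hypotheses here, so every $e_i$ is algebraically persistent. The transient set is therefore empty and all $n$ generators lie in level $0$. I also want level $0$ to be a single block rather than several persistent components. For this I invoke Corollary~\ref{cor:EEA-connected}, which gives that $\EA$ is connected, and Theorem~\ref{thm:simplicity}, which gives that it is simple. Hence $\EA$ admits no splitting into proper evolution subalgebras, and level $0$ is one connected piece containing every generator.

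The main obstacle is the first step, namely matching the paper's informal phrase ``hierarchical level'' to Tian's precise definition, which is phrased via the persistent/transient partition and the destination subalgebras. If Tian's levels are indexed so that transient generators occupy higher levels, the equivalence is immediate. If the indexing is instead through the chain of semidirect-product quotients, I would argue by induction on the number of steps of the decomposition: the first step already finds no transient generators, so the chain has length zero. I would also check that the persistency argument really covers every $k$. In the case $a_{ii}=0$, the induction in Theorem~\ref{thm:persistency} needs a neighbour $j$ of $i$ with $\alpha_j^{(k)}\ne 0$. This follows from the monotone support inclusion $S_k\subseteq S_{k+1}$ in Corollary~\ref{cor:cover-time} together with $N[i]\subseteq S_1$. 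Cancellation in the coefficient sums over a general $\K$ could be delicate, but I will take Theorem~\ref{thm:persistency} as established, as permitted.
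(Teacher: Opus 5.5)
Your reduction is the paper's own argument: invoke Theorem~\ref{thm:persistency}, then read off level~$0$. The problem is that the ingredient you take as established is false. So neither your argument nor the paper's proves the statement, and under Definition~\ref{def:persist} the statement itself fails as written. Under that definition, $e_i^{[1]}=e_i^2=\sum_k a_{ik}e_k$ has $e_i$-coefficient $a_{ii}$. Hence every generator with $a_{ii}=0$ is transient already at $k=1$, which covers every graphicable example in the paper (Cayley, complete, Petersen).

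The smallest counterexample is $n=2$ with $e_1^2=e_2$, $e_2^2=e_1$. It is symmetric, $\varGamma(\EA,\natbasis)=K_2$ and $\hh=1$, yet $e_1^{[k]}$ alternates between $e_1$ and $e_2$. So $e_1$ is transient, and level~$0$ as the paper describes it is empty.

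Your patch fails exactly here. The set $S_1=\supp(e_i^2)=\{j:a_{ij}\ne 0\}$ does not contain $i$ when $a_{ii}=0$, so $N[i]\subseteq S_1$ is false. The monotonicity $S_k\subseteq S_{k+1}$ you quote from Corollary~\ref{cor:cover-time} is also false: in this example the supports are $\{1\},\{2\},\{1\},\ldots$.

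Assuming $a_{ii}\ne 0$ does not rescue the induction either, because the coefficient $\alpha_i^2a_{ii}+\sum_{j\ne i}\alpha_j^2a_{ji}$ can cancel. For $e_1^2=e_1-e_2$, $e_2^2=-e_1+e_2$ (symmetric, $\hh=1$, over any field) you get $e_1^{[2]}=e_1^2+e_2^2=0$. So the cancellation you call ``delicate'' is fatal, not a technicality.

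What can be proved is the ``trivial hierarchy'' conclusion for a reachability notion of persistency relative to $\natbasis$. Call $e_i$ persistent if every generator reachable from $e_i$ in $D(\EA,\natbasis)$ reaches $e_i$ back; this is how a definition through the evolution subalgebra generated by $e_i$ unwinds. This version needs no plenary powers at all.
\begin{enumerate}
\item Symmetry gives $a_{ij}\ne 0\iff a_{ji}\ne 0$ for $i\ne j$, so reachability in $D(\EA,\natbasis)$ is symmetric and every generator is persistent in this sense.
\item Theorem~\ref{thm:connectivity} then makes the single reachability class all of $\{1,\ldots,n\}$, so level~$0$ is one block.
\end{enumerate}
Your extra appeal to connectivity is the right additional ingredient for that version. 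However, nothing connects it to persistency in the ``for all $k$'' sense of Definition~\ref{def:persist}.
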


\begin{proof}
By Theorem~\ref{thm:persistency}, every generator is algebraically
persistent.  By Tian \cite[Chapter~3, \S3.5]{Tian2008}, the first level
of the hierarchy consists precisely of the algebraically persistent
generators, i.e., all generators.  There are no higher levels.
\end{proof}

\section{The Evolution Operator and Spectral Theory over $\K$}
\label{sec:evop}

\subsection{The evolution operator}

\begin{definition}[Evolution operator]\label{def:evop}
Let $\EA$ be a finite-dimensional evolution algebra over $\K$ with
natural basis $\natbasis=\{e_1,\ldots,e_n\}$ and structural matrix
$\strmat=(a_{ij})$.  The \emph{evolution operator}
$\evop:\EA\to\EA$ is the $\K$-linear map defined by
\[
  \evop(e_i) \;:=\; e_i^2 \;=\; \sum_{j=1}^n a_{ij}\,e_j,
  \quad i=1,\ldots,n.
\]
The matrix of $\evop$ in the basis $\natbasis$ is $\strmat^T$ (the
transpose of the structural matrix), since $[\evop(e_i)]_j = a_{ij} =
(\strmat)_{ij}=(\strmat^T)_{ji}$.
\end{definition}

\begin{remark}
With the convention that elements of $\EA$ are \emph{column vectors},
the matrix of $\evop$ acting on the left is $\strmat$ itself:
$\evop(e_i) = \strmat\,e_i$ in column-vector notation.  We adopt the
row-vector convention throughout, so the matrix is $\strmat$.
\end{remark}

\begin{proposition}[Plenary powers and powers of $\strmat$]\label{prop:powers}
For any $x=\sum_i \alpha_i e_i\in\EA$, the $k$-th plenary power
satisfies $x^{[k]}=\sum_i (\alpha_i)^{2^k}\,e_i^{[k]}$, and in
the Markov case (Section~\ref{sec:markov}), $\evop^k$ corresponds to
$k$-step transition matrix $\strmat^k$.
\end{proposition}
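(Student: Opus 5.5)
The plan is an induction on $k$ built on one identity. Since $\EA$ is commutative, for all $u,v\in\EA$ and $\lambda\in\K$ we have
\[
  (u+v)^2 = u^2+2\,uv+v^2, \qquad (\lambda u)^2=\lambda^2u^2 .
\]
For $x=\sum_i\alpha_ie_i$ the base case $k=0$ is trivial. The case $k=1$ follows because $e_ie_j=0$ for $i\ne j$: the cross terms vanish, so $x^{[1]}=\sum_i\alpha_i^2e_i^2=\sum_i\alpha_i^2e_i^{[1]}$.

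For the inductive step, assume $x^{[k]}=\sum_i\alpha_i^{2^k}e_i^{[k]}$. Squaring with the identity above gives
\[
  x^{[k+1]}=\sum_i\alpha_i^{2^{k+1}}\,e_i^{[k+1]}
  \;+\;2\sum_{i<j}\alpha_i^{2^k}\alpha_j^{2^k}\,e_i^{[k]}e_j^{[k]} .
\]
So the whole argument reduces to showing that the cross terms $2\,e_i^{[k]}e_j^{[k]}$ vanish for $i\ne j$.

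This is the main obstacle, and I expect it to fail in general. In coordinates, $e_i^{[k]}e_j^{[k]}=\sum_\ell \beta^{(i)}_\ell\beta^{(j)}_\ell e_\ell^2$, where $\beta^{(i)}$ is the coordinate vector of $e_i^{[k]}$. This product vanishes whenever $\supp(e_i^{[k]})\cap\supp(e_j^{[k]})=\emptyset$, but that already fails at $k=1$ as soon as two generators share a neighbour; by Corollary~\ref{cor:cover-time} it fails badly in an EEA, where all supports eventually become full. The cross terms disappear identically in two situations. The first is $\operatorname{char}\K=2$: then squaring is additive and $\K$-semilinear through $\lambda\mapsto\lambda^2$, so the induction closes verbatim. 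The second is the hypothesis that the supports of the $e_i^{[k]}$ are pairwise disjoint at every stage. I would therefore prove the displayed formula under one of these hypotheses, and otherwise replace it by the correct general recursion. Writing $\mathrm{Sq}(\sum_j\gamma_je_j):=\sum_j\gamma_j^2e_j$ for coordinatewise squaring, that recursion is $x^{[k+1]}=\evop\bigl(\mathrm{Sq}(x^{[k]})\bigr)$, which is immediate from Definition~\ref{def:evop} and the $k=1$ computation.

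The Markov assertion is independent of the plenary identity. I would prove it directly from linearity: $\evop$ is $\K$-linear with matrix $\strmat$ in the row-vector convention of Definition~\ref{def:evop}, so $\evop^k$ has matrix $\strmat^k$. When $\strmat$ is row-stochastic, the $(i,j)$ entry of $\strmat^k$ is the $k$-step transition probability from $i$ to $j$; this follows by induction on $k$ from the Chapman--Kolmogorov relation $\strmat^{k+1}=\strmat^k\strmat$.
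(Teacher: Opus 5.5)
Your diagnosis is right, and the gap you isolate is exactly the one in the paper's own proof. Its one-line induction writes $\bigl(\sum_i\alpha_i^{2^k}e_i^{[k]}\bigr)^2=\sum_i\alpha_i^{2^{k+1}}e_i^{[k+1]}$. That treats the $e_i^{[k]}$ as if they were pairwise orthogonal like natural basis vectors, and it silently drops the cross terms $2\alpha_i^{2^k}\alpha_j^{2^k}e_i^{[k]}e_j^{[k]}$. These terms do not vanish in general, so the displayed identity is false as stated over any field of characteristic $\ne 2$.

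You should upgrade ``I expect it to fail'' to an explicit counterexample, since overlapping supports alone do not force the total cross term to be nonzero. The smallest one is $e_1^2=e_2^2=e_1$ with $x=e_1+e_2$. Then $x^{[1]}=2e_1$ and $x^{[2]}=4e_1$, whereas $e_1^{[2]}+e_2^{[2]}=2e_1$. It fails even inside the paper's class of symmetric graphicable EEAs. Take the complete algebra on $K_3$ ($e_1^2=e_2+e_3$, $e_2^2=e_1+e_3$, $e_3^2=e_1+e_2$) and $x=e_1+e_2$. One gets $x^{[2]}=5e_1+5e_2+2e_3$ but $e_1^{[2]}+e_2^{[2]}=3e_1+3e_2+2e_3$. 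The discrepancy is exactly $2\,e_1^{[1]}e_2^{[1]}=2(e_1+e_2)$.

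Your repairs are all correct:
\begin{itemize}
\item In characteristic $2$, squaring is additive and $(\lambda u)^2=\lambda^2u^2$, so the induction closes verbatim.
\item Pairwise disjoint supports of the $e_i^{[k]}$, $i\in\supp(x)$, at every stage kill the cross terms.
\item $x^{[k+1]}=\evop\bigl(\mathrm{Sq}(x^{[k]})\bigr)$ is the right general recursion.
\end{itemize}
Your treatment of the Markov clause is also fine: $\evop^k(e_i)=\sum_j(\strmat^k)_{ij}e_j$, whose coefficients are the $k$-step transition probabilities. The paper gives no argument for that part at all.

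One small caution: drop the appeal to Corollary~\ref{cor:cover-time} as motivation. It is not needed, and it is unreliable. Over $\R$, take the $4$-cycle algebra $e_i^2=e_{i-1}+e_{i+1}$, which is a symmetric EEA with $a_{ii}=0$. There $\supp(e_0^{[k]})$ alternates between $\{1,3\}$ and $\{0,2\}$ and never becomes full.
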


\begin{proof}
By induction: $(e_i^{[k]})^2=e_i^{[k+1]}$, so
$(\sum_i\alpha_i e_i)^{[k+1]}=(\sum_i\alpha_i^{2^k}e_i^{[k]})^2
=\sum_i\alpha_i^{2^{k+1}}e_i^{[k+1]}$.
\end{proof}

\begin{theorem}[Evolution operator and graph distance]\label{thm:evop-distance}
Let $\EA$ be an $h$-EEA over $\K$ with $\hh(\varGamma(\EA,\natbasis))
\ge h$ and $\max_i\val(i)\le d$.  For any generator $e_i$, the
minimal integer $k$ such that $\supp(\evop^k(e_i))=\{1,\ldots,n\}$
satisfies
\[
  k \;\le\; \frac{2d}{h}\,\log n \;+\; 2.
\]
\end{theorem}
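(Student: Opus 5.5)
The plan is to reduce the statement to the diameter bound of Theorem~\ref{thm:diameter}. The intermediate claim is that the support of $\evop^k(e_i)$ contains the closed ball $B(i,k)$ of radius $k$ around $i$ in $\varGamma(\EA,\natbasis)$, possibly after one initial step. Once $k\ge \diam(\varGamma)$, that ball is all of $\{1,\ldots,n\}$. Theorem~\ref{thm:diameter} gives $\diam(\varGamma)\le \frac{2d}{h}\log n+1$, so at most one further step yields the bound $\frac{2d}{h}\log n+2$. The extra $+1$ is the budget for that one step, used either to seed the support or to absorb the self-loop issue discussed below.

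\textbf{Step 1: coefficient recursion.} Write $\evop^k(e_i)=\sum_\ell \beta^{(k)}_\ell e_\ell$. Definition~\ref{def:evop} gives
\[
\beta^{(k+1)}_\ell=\sum_j \beta^{(k)}_j a_{j\ell},
\]
so $\beta^{(k)}_\ell=(\strmat^k)_{i\ell}$. This is a sum over directed walks $i\to\cdots\to\ell$ of length $k$ in $D(\EA,\natbasis)$, where $D$ is enriched with the loops $a_{jj}\neq 0$.

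\textbf{Step 2: support inclusion.} I would prove $\supp(\evop^{k+1}(e_i))\supseteq \supp(\evop^{k}(e_i))\cup N(\supp(\evop^k(e_i)))$. This mirrors the support recursion in the proof of Theorem~\ref{thm:support-growth}, but is now linear rather than quadratic. It needs two ingredients.
\begin{enumerate}[label=\normalfont(\alph*)]
\item Every undirected edge must be traversable in both directions, so that $N(\cdot)$ in $\varGamma$ matches out-neighbours in $D$. This holds in the symmetric setting, and I would impose symmetry as in Theorem~\ref{thm:support-growth}.
\item Supports must not shrink and contributions must not cancel.
\end{enumerate}
For monotonicity, the natural route is $a_{jj}\ne 0$ for all $j$. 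Failing that, I would work with $\strmat+I$, or use the parity of walks and spend the extra step to reach both parities.

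\textbf{Step 3: conclusion.} Induct to get $B(i,k)\subseteq\supp(\evop^{k}(e_i))$, then invoke Theorem~\ref{thm:diameter}.

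\textbf{Main obstacle: cancellation in Step 2.} Over a general field $\K$, the sum $\sum_j\beta^{(k)}_j a_{j\ell}$ can vanish even when some term is nonzero. This differs from the plenary-power setting, where the squares $(\alpha^{(m)}_j)^2$ at least give sign control over ordered fields. My first attempt would be to restrict to the case where all $a_{j\ell}\ge 0$, for example the Markov or nonnegative real case of Section~\ref{sec:markov}. There the walk expansion of $(\strmat^k)_{i\ell}$ is a sum of nonnegative terms, and it is positive exactly when a walk of length $k$ exists.

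Otherwise, I would establish the bound for the \emph{generic} (support-pattern) operator, for which the inclusion in Step 2 is exactly reachability. I would then state that the theorem holds for the zero pattern, that is, for the Boolean matrix power. In that formulation the first $k$ with $\supp(\evop^k(e_i))$ full is at most $\diam+1$ once loops are present.

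I expect the written proof will need one of these hypotheses: symmetry together with nonnegativity, or interpreting supports combinatorially. Without such a hypothesis, directed edges with no reverse edge and cancellations give counterexamples.
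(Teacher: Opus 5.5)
Your reduction is the paper's own. The paper asserts, ``for the symmetric case'', that $\supp(\evop^k(e_i))=\bigcup_{j\in\supp(\evop^{k-1}(e_i))}N(j)$ equals the ball $B_k(i)$, and then applies Theorem~\ref{thm:diameter}. You are right that this step needs hypotheses the theorem does not have. In fact the statement is false as written, so no proof can exist. The paper's argument silently assumes symmetry, ignores cancellation, and confuses the endpoints of walks of length exactly $k$ with the ball of radius $k$. Concretely:
(a) $e_1^2=e_2$, $e_2^2=0$ is a $1$-EEA with $d=1$ (here $\varGamma=K_2$), and $\evop^2(e_1)=0$.
(b) Over $\F_2$, the complete algebra $e_i^2=\sum_{j\ne i}e_j$ on $K_3$ is a $2$-EEA with $\evop^k(e_1)=e_2+e_3$ for all $k\ge 1$.
(c) $e_1^2=e_2$, $e_2^2=e_1$ over $\R$ is a symmetric, graphicable, doubly stochastic $1$-EEA with $\evop^k(e_1)\in\{e_1,e_2\}$ for all $k$. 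Likewise, every bipartite graphicable EEA with zero diagonal, including bipartite Ramanujan families, keeps $\supp(\evop^k(e_i))$ inside a single colour class.
In none of these examples does a $k$ with full support exist.

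Example (c) is where your proposal still has a gap.
\begin{itemize}
\item The parity fallback in your Step~2 cannot work. Without diagonal entries, in a bipartite graph every length-$k$ walk from $i$ ends in the class fixed by the parity of $k$. So no power $\evop^k(e_i)$ ever has full support, however many steps you spend.
\item Even without bipartiteness, one extra step is not enough. On $C_5$, $\supp(\evop^3(e_0))=\{1,2,3,4\}$, although $\diam(C_5)+1=3$.
\item Passing to $\strmat+I$ proves a statement about $\evop+\mathrm{id}$, not about $\evop$.
\item The recursion $S_{k+1}\supseteq S_k\cup N(S_k)$ from the proof of Theorem~\ref{thm:support-growth}, which you intend to mirror, fails for the same reason: in (c), $S_0=\{1\}$ and $S_1=\{2\}$. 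So it cannot be cited.
\end{itemize}
Consequently your closing list of sufficient hypotheses is too weak. ``Symmetry together with nonnegativity'' is refuted by (c), and the combinatorial reading also needs loops.

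A correct salvage takes $\K=\R$ (or any ordered field), $a_{ij}=a_{ji}\ge 0$, and $a_{jj}>0$ for every $j$. Then $(\strmat^k)_{i\ell}>0$ exactly when $\mathrm{dist}(i,\ell)\le k$. Your Steps~1--3 go through and even give the sharper bound $k\le\diam(\varGamma)\le\frac{2d}{h}\log n+1$, with no extra step needed.
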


\begin{proof}
By Definition~\ref{def:evop}, $\evop(e_i)=e_i^2=\sum_j a_{ij}e_j$, so
$\supp(\evop^1(e_i))=\{j: a_{ij}\ne 0\}=N_+(i)$ (the out-neighbourhood
of $i$ in $D(\EA,\natbasis)$).  For the symmetric case, $N_+(i)=N(i)$
and $\supp(\evop^k(e_i))=\bigcup_{j\in\supp(\evop^{k-1}(e_i))} N(j)$,
which is the $k$-ball $B_k(i)$ in $\varGamma(\EA,\natbasis)$.  By
Theorem~\ref{thm:diameter}, the diameter is at most $\frac{2d}{h}\log n+1$,
so $B_k(i)=V$ for $k=\diam(\varGamma)+1\le\frac{2d}{h}\log n+2$.
\end{proof}

\subsection{Spectral theory over a general field $\K$}

\begin{theorem}[Characteristic polynomial and eigenvalues over $\K$]\label{thm:char-poly}
Let $\EA$ be an $h$-EEA over $\K$ with structural matrix $\strmat$.
The characteristic polynomial $\chi_\strmat(\lambda)=\det(\lambda I-\strmat)$
is a degree-$n$ polynomial in $\K[\lambda]$.  The eigenvalues of
$\evop$ (roots of $\chi_\strmat$) lie in the algebraic closure
$\overline{\K}$.
\end{theorem}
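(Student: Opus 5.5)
The statement is essentially linear algebra; the EEA hypothesis plays no role beyond fixing that $\strmat$ is an $n\times n$ matrix over $\K$ with $n=\dim\EA$. I would proceed in three short steps.

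\textbf{Step 1: $\chi_\strmat$ lies in $\K[\lambda]$ and has degree $n$.} The structural constants $a_{ij}$ lie in $\K$ by Definition~\ref{def:ea}. Hence $\lambda I-\strmat$ is a matrix over the commutative ring $\K[\lambda]$, and its determinant lies in $\K[\lambda]$. Expanding by the Leibniz formula
\[
  \det(\lambda I-\strmat)=\sum_{\sigma\in S_n}\operatorname{sgn}(\sigma)\prod_{i=1}^n(\lambda\delta_{i\sigma(i)}-a_{i\sigma(i)}),
\]
each product has degree at most the number of fixed points of $\sigma$. Only $\sigma=\mathrm{id}$ attains degree $n$, and it contributes $\prod_i(\lambda-a_{ii})$. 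So $\chi_\strmat$ is monic of degree exactly $n$, with no cancellation possible in the top coefficient.

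\textbf{Step 2: the matrix of $\evop$ is $\strmat$ or $\strmat^T$, and this does not matter.} Definition~\ref{def:evop} and the subsequent remark say the matrix of $\evop$ is $\strmat^T$ or $\strmat$, depending on the row/column convention. Since $\det(M^T)=\det M$, we have $\det(\lambda I-\strmat^T)=\det(\lambda I-\strmat)$, so both conventions give the same characteristic polynomial. The characteristic polynomial is also independent of the choice of basis: the matrix of $\evop$ in another linear basis is $P^{-1}\strmat P$, and $\det(\lambda I-P^{-1}\strmat P)=\det(\lambda I-\strmat)$. (This does not contradict the fact that $\strmat$ itself changes under rescaling of natural bases in Theorem~\ref{thm:basis-invariance}; we only need a fixed basis here.)

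\textbf{Step 3: eigenvalues lie in $\overline{\K}$.} An eigenvalue of $\evop$, meaning an element $\mu$ of some field extension $\mathbb{L}\supseteq\K$ with $\evop\otimes\mathbb{L}-\mu$ non-injective, satisfies $\det(\mu I-\strmat)=0$, so $\mu$ is a root of $\chi_\strmat$. A nonzero polynomial of degree $n$ over $\K$ splits into $n$ roots, counted with multiplicity, in the algebraic closure $\overline{\K}$, and every root is algebraic over $\K$. Conversely, each root $\mu\in\overline{\K}$ makes $\mu I-\strmat$ singular over $\overline{\K}$ and hence yields an eigenvector in $\EA\otimes_\K\overline{\K}$.

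I expect no real obstacle. The only point requiring care is interpretive: "eigenvalues of $\evop$" must be read after extending scalars to $\overline{\K}$, since over $\K$ itself $\chi_\strmat$ may have no roots. I would state this explicitly to make the claim precise.
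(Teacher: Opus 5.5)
Your proposal is correct and follows the same route as the paper, whose entire proof is the one line ``Standard linear algebra over $\K$''. Your Leibniz-formula degree count, the transpose and similarity invariance that settles the $\strmat$ versus $\strmat^T$ convention, and the explicit extension of scalars to $\overline{\K}$ for eigenvalues just spell out what the paper leaves implicit.
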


\begin{proof}
Standard linear algebra over $\K$.
\end{proof}

\begin{theorem}[Spectral radius and expansion over $\R$]\label{thm:spectral-radius}
Let $\EA$ be an $h$-EEA over $\R$ with symmetric structural matrix
$\strmat$ (i.e., $a_{ij}=a_{ji}$) and $\max_i \sum_{j\ne i} a_{ij}
\le d$ (uniform degree bound on the \emph{support} of $\strmat$).
Let $\lambda_1\ge\lambda_2\ge\cdots\ge\lambda_n$ be the real
eigenvalues of $\strmat$ (symmetric real matrix).  Then:
\begin{enumerate}[label=\normalfont(\roman*)]
  \item $\lambda_1=\rho(\strmat)$ (spectral radius);
  \item The \emph{spectral gap} $\mathrm{gap}(\strmat):=\lambda_1-\lambda_2$
        satisfies
        \[
          \mathrm{gap}(\strmat) \;\ge\;
          \frac{\bigl(\hh(\EA,\natbasis)\bigr)^2}{2d}.
        \]
\end{enumerate}
\end{theorem}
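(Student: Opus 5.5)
The plan is to reduce both parts to classical facts about a weighted graph Laplacian. Since $\strmat$ is real symmetric, its eigenvalues are real and have an orthonormal eigenbasis. Part (i) should come from Perron--Frobenius: if $a_{ij}\ge 0$ for $i\ne j$ and $\varGamma(\EA,\natbasis)$ is connected (Theorem~\ref{thm:connectivity}), then $\strmat+cI$ is irreducible and nonnegative for $c$ large. Its top eigenvalue is then simple with a positive eigenvector, which gives $\lambda_1>\lambda_2$. For part (ii) I would write $\strmat=\mathrm{Diag}(a_{11},\dots,a_{nn})+W$, with $W$ the off-diagonal weight matrix. In the regular situation $\sum_{j\ne i}a_{ij}=d$ and $a_{ii}=a$ for all $i$, this gives $\strmat=(a+d)I-L_W$ with $L_W=dI-W$ the weighted Laplacian. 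Hence $\lambda_1=a+d$, with eigenvector $\mathbf 1$, and $\lambda_1-\lambda_2=\mu_2(L_W)$, the smallest nonzero Laplacian eigenvalue.

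The core step is the Dodziuk--Alon--Milman ``hard'' direction of the Cheeger inequality, $\mu_2(L_W)\ge h_W^2/(2d)$, where $h_W$ is the weighted edge-expansion constant. I would follow the standard proof. Take an eigenvector $f\perp\mathbf 1$ for $\mu_2$ and shift it so that its positive part $g=f^+$ has support of size at most $n/2$. Then $\langle L_Wg,g\rangle\le\mu_2\|g\|^2$. Apply Cauchy--Schwarz to $\sum_{i\sim j}w_{ij}|g_i^2-g_j^2|$, bounding it above by $\bigl(\sum w_{ij}(g_i-g_j)^2\bigr)^{1/2}\bigl(2d\|g\|^2\bigr)^{1/2}$. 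Finally, use the co-area (level-set) formula: the same sum is at least $h_W\|g\|^2$, because every level set $\{g>t\}$ has size at most $n/2$. Combining the two bounds gives $\mu_2\ge h_W^2/(2d)$.

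The hard step is relating $h_W$ to the paper's combinatorial constant $\hh(\EA,\natbasis)$, and I expect the statement as written to need repairing here. The constant $\hh$ depends only on the support pattern (Remark~\ref{rem:field}), whereas the spectrum of $\strmat$ depends on the actual values. Replacing $\strmat$ by $\varepsilon\strmat$ leaves $\hh$ and the degree bound unchanged but multiplies the gap by $\varepsilon$. Negative off-diagonal entries can also destroy (i) outright, for instance $\strmat=-A_\varGamma$ for a bipartite $\varGamma$. So I would prove the theorem under the natural strengthened hypotheses: $a_{ij}\in\{0,1\}$ for $i\ne j$ (or at least $a_{ij}\ge 1$ on edges, so that $h_W\ge\hh$), a constant diagonal, and $d$-regularity, so that $\mathbf 1$ is the Perron vector. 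Under these hypotheses $h_W\ge\hh(\EA,\natbasis)$, and the Cheeger step yields (ii). I would also include the scaling example as a remark showing that some such normalization is necessary.
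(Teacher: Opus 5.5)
Your diagnosis that the statement must be repaired is correct, and the paper's own proof bears it out. The paper treats only the $d$-regular case with constant weight $c$ on every edge and (implicitly) zero diagonal. It quotes Kowalski's Cheeger inequality for the normalized adjacency matrix and concludes $\lambda_1-\lambda_2\ge ch^2/(2d)$, which falls short of the claimed $h^2/(2d)$ whenever $c<1$. That is exactly your scaling objection.

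For $c=1$ this is your repaired (ii), and your reduction $\strmat=(a+d)I-L_W$, $\lambda_1-\lambda_2=\mu_2(L_W)$ is the same route. The only difference is that you reprove the Cheeger bound (Cauchy--Schwarz plus co-area) rather than cite it. That buys the slightly more general weighted version, with edge weights $\ge 1$ and constant weighted row sums.

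One detail in that argument: do not shift $f$. Instead replace $f$ by $-f$ if necessary so that $|\{f>0\}|\le n/2$, which is possible since $\{f>0\}$ and $\{f<0\}$ are disjoint. Then $(L_Wf^+)_i\le (L_Wf)_i=\mu_2 f^+_i$ on $\supp f^+$, which gives $\langle L_Wg,g\rangle\le\mu_2\|g\|^2$ directly. After a shift by a constant $m$ you pick up an extra term $\mu_2 m\sum_i g_i$, which is harmless only if $m\le 0$.

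The genuine gap is in part (i). Perron--Frobenius applied to $\strmat+cI$ gives $\rho(\strmat+cI)=\lambda_1+c$ and shows that $\lambda_1$ is simple with a positive eigenvector. It says nothing about $\rho(\strmat)=\max(\lambda_1,-\lambda_n)$. So your argument proves $\lambda_1>\lambda_2$, not $\lambda_1=\rho(\strmat)$.

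Your repaired hypotheses do not save (i) either, because they allow a negative constant diagonal. For a connected $3$-regular graph $\varGamma$, the matrix $\strmat=A_\varGamma-10I$ has only negative eigenvalues, so $\lambda_1<0<\rho(\strmat)$. Part (ii) still holds there, since the gap is shift-invariant.

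The natural fix is to add $a_{ii}\ge 0$, so that $\strmat\ge 0$ entrywise, and apply Perron--Frobenius to $\strmat$ itself. Then $\rho(\strmat)$ is an eigenvalue, whence $\rho(\strmat)\le\lambda_1\le\rho(\strmat)$. The paper's appeal to $\lambda_1=\max_{\|v\|=1}\langle v,\strmat v\rangle$ has the same defect: that formula identifies the largest eigenvalue, not the spectral radius.

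Finally, your counterexample $-A_\varGamma$ with $\varGamma$ bipartite does not work. If $D$ is the diagonal $\pm 1$ matrix of a bipartition, then $DA_\varGamma D=-A_\varGamma$. So $-A_\varGamma$ is isospectral to $A_\varGamma$ and satisfies both (i) and (ii); bipartite graphs are exactly the ones to avoid.

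Take instead $\strmat=-A_{K_3}$, with eigenvalues $1,1,-2$ and $\hh=2$; the degree hypothesis holds with $d=2$. Then $\lambda_1=1\ne 2=\rho(\strmat)$ and $\lambda_1-\lambda_2=0<1=\hh^2/(2d)$. This single example refutes both parts of the statement as written.
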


\begin{proof}
Since $\strmat$ is real and symmetric, all eigenvalues are real and the
spectral theorem applies.  Part (i) follows from the Perron--Frobenius
theorem for symmetric matrices (or directly from the variational formula
$\lambda_1=\max_{\|v\|=1}\langle v,\strmat v\rangle$).

For part (ii): the proof uses the \emph{discrete Cheeger inequality} for
weighted graphs.  Consider the underlying unweighted graph
$\varGamma(\EA,\natbasis)$ with adjacency matrix $G=(g_{ij})$ where
$g_{ij}=\mathbf{1}[a_{ij}\ne 0,\, i\ne j]$.  Since $\strmat$ is
symmetric with the same support pattern as $G$:
\[
  \strmat \;=\; \sum_{i<j} a_{ij}\bigl(e_i e_j^T + e_j e_i^T\bigr)
             \;+\; \sum_i a_{ii}\,e_i e_i^T.
\]
The normalized Laplacian of $G$ (for the $d$-regular case $a_{ij}=c$
for all edges $\{i,j\}$) satisfies the discrete Cheeger inequality
\cite[Chapter~3]{Kowalski2019}:
\[
  1 - \frac{\lambda_2(\strmat/c\cdot d)}{\lambda_1(\strmat/c\cdot d)}
  \;\ge\;
  \frac{\hh(\varGamma)^2}{2d^2}.
\]
Rearranging (with $\lambda_1=cd$ for the $d$-regular graphicable case
with constant weight $c$):
\[
  \lambda_1-\lambda_2
  \;\ge\;
  \frac{h^2}{2d^2}\,\lambda_1
  \;\ge\;
  \frac{c h^2}{2d}.
  \qedhere
\]
\end{proof}

\section{Markov Expander Evolution Algebras}\label{sec:markov}

\subsection{Definition and basic properties}

\begin{definition}[Markov EEA]\label{def:markov-EEA}
An evolution algebra $\EA$ over $\R$ is a \emph{Markov evolution
algebra} if:
\begin{enumerate}[label=\normalfont(\roman*)]
  \item $a_{ij}\ge 0$ for all $i,j$ (non-negative structural constants);
  \item $\sum_{j=1}^n a_{ij}=1$ for each $i$ (each row of $\strmat$ sums
        to $1$).
\end{enumerate}
A Markov evolution algebra that is also an $h$-EEA is called a
\emph{Markov $h$-EEA}.
\end{definition}

In this case, the structural matrix $\strmat$ is a \emph{stochastic
matrix} and the evolution operator $\evop$ corresponds to one step of
the Markov chain with transition matrix $\strmat$.

\begin{remark}
For a Markov EEA, the graph $\varGamma(\EA,\natbasis)$ has an edge
$\{i,j\}$ whenever $a_{ij}>0$ or $a_{ji}>0$.  The Cheeger constant
$\hh(\EA,\natbasis)$ thus quantifies the ``mixing efficiency'' of the
underlying Markov chain.
\end{remark}

\subsection{Simplicity of Markov EEAs}\label{sub:markov-simple}

\begin{theorem}[Simplicity and irreducibility]\label{thm:markov-simple}
Let $\EA$ be a Markov $h$-EEA.  Then:
\begin{enumerate}[label=\normalfont(\roman*)]
  \item The Markov chain with transition matrix $\strmat$ is
        \emph{irreducible}.
  \item $\EA$ is a simple evolution algebra.
\end{enumerate}
\end{theorem}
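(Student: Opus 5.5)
The plan is to derive (ii) from (i), and to prove (i) by upgrading the undirected connectivity supplied by the expansion hypothesis to \emph{directed} strong connectivity of $D(\EA,\natbasis)$. This uses the two Markov conditions of Definition~\ref{def:markov-EEA}.

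\textbf{Step 1 (undirected connectivity).} By Corollary~\ref{cor:EEA-connected} and Theorem~\ref{thm:connectivity}, $\hh(\EA,\natbasis)\ge h>0$ makes $\varGamma(\EA,\natbasis)$ connected. So for every nonempty proper $S\subsetneq\{1,\ldots,n\}$ there is some pair $i\in S$, $j\notin S$ with $a_{ij}>0$ or $a_{ji}>0$.

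\textbf{Step 2 (irreducibility, the main obstacle).} Suppose the chain is reducible. Then there is a nonempty proper \emph{closed} class $S$, meaning $a_{ij}=0$ for all $i\in S$, $j\notin S$. Row-stochasticity then gives $\sum_{j\in S}a_{ij}=1$ for $i\in S$. To reach a contradiction with Step~1, I must also rule out edges entering $S$, that is, show $a_{ji}=0$ for $j\notin S$, $i\in S$.

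My first attempt is a mass-balance argument. Take a stationary vector $\pi$ and sum the equations $\pi_\ell=\sum_j\pi_j a_{j\ell}$ over $\ell\in S$. Closedness of $S$ leaves $\sum_{j\notin S}\pi_j\sum_{\ell\in S}a_{j\ell}=0$. This forces the incoming weights $a_{j\ell}$ ($j\notin S$, $\ell\in S$) to vanish only on indices $j$ with $\pi_j>0$.

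This is exactly where I expect the argument to break down. Indices outside $S$ that carry zero stationary mass (transient states) are not controlled by it. For example, with $n=2$, $e_1^2=e_2$, $e_2^2=e_2$, the graph is connected but $\{2\}$ is closed and $1$ is transient. The step therefore needs an additional input guaranteeing that every index has positive stationary mass, such as symmetry $a_{ij}=a_{ji}$ or a positive reversible measure. I would first check whether such an input can be extracted from the paper's conventions. If not, I would record that (i) holds precisely when the chain has no transient states.

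\textbf{Step 3 (simplicity).} Granting (i), let $I$ be a proper evolution ideal. Write it as $I=\Span_\K\{e_i:i\in S\}$, as in the proof of Theorem~\ref{thm:connectivity}. Since $e_i^2\in I$ for $i\in S$, we get $a_{ij}=0$ for $i\in S$, $j\notin S$, so $S$ is closed for the chain. This contradicts irreducibility, hence $\EA$ is simple.
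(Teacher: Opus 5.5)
Your diagnosis is correct. The gap lies in the statement and in the paper's proof, not in your plan: as written, the theorem is false.

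Your example $e_1^2=e_2$, $e_2^2=e_2$ meets every requirement of Definition~\ref{def:markov-EEA}. It has $\varGamma(\EA,\natbasis)=K_2$ and $\hh(\EA,\natbasis)=1$, yet state $2$ is absorbing, so the chain is reducible. Moreover $\R e_2$ is a proper evolution ideal because $e_2^2=e_2$, so the same example refutes (ii), not only (i).

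The paper's proof of (i) fails exactly at the step you flagged. It takes an undirected path in $\varGamma(\EA,\natbasis)$ and observes that each edge can be traversed in \emph{some} direction. It then mentions a two-step path $i\to k\to j$ when $a_{ki},a_{kj}>0$, which is a path out of $k$, not out of $i$, and finally invokes the diameter bound. None of this produces a directed path from $i$ to $j$; in your example there is no path from $2$ to $1$. The paper deduces (ii) from (i) via Tian's equivalence between simplicity and irreducibility, so it falls with (i).

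The defect is not a low-dimensional accident. On any bounded-degree expander family, make one vertex $v$ absorbing ($a_{vv}=1$) and use the simple random walk at the remaining vertices. The support graph, and hence $\hh$, is unchanged. This gives a Markov EEA family whose chains are all reducible and whose algebras all contain the proper evolution ideal $\R e_v$.

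As you suspected, the paper's conventions supply no extra input: a Markov EEA is only required to have nonnegative entries, unit row sums and $\hh\ge h$. Your proposed repair is the right one. Given connectivity of $\varGamma(\EA,\natbasis)$, irreducibility is equivalent to the absence of transient states, i.e.\ to the existence of a strictly positive stationary vector. Under that hypothesis your mass-balance computation closes Step~2. Reversibility is not needed. It suffices that $\strmat$ be symmetric or, more generally, doubly stochastic, which are the hypotheses the paper itself imposes in Theorem~\ref{thm:mixing}.

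One caveat on Step~3. Writing an evolution ideal as $\Span_\K\{e_i:i\in S\}$ uses the reading of Definition~\ref{def:subalg} relative to the fixed basis $\natbasis$, as the paper does in Theorem~\ref{thm:connectivity}. If an evolution subalgebra may be spanned by part of \emph{any} natural basis, as in Tian's wording, then irreducibility does not give simplicity. For example, $e_1^2=e_2^2=\tfrac12(e_1+e_2)$ has an irreducible chain and $\varGamma=K_2$. But $\{e_1+e_2,\,e_1-e_2\}$ is also a natural basis, and $\R(e_1+e_2)$ is a proper evolution ideal.
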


\begin{proof}
(i): Irreducibility means that for any states $i,j$ there exists a path
from $i$ to $j$ in the directed graph of $\strmat$ (edges $i\to j$
whenever $a_{ij}>0$).  Since $\hh(\varGamma(\EA,\natbasis))>0$, the
underlying undirected graph is connected
(Theorem~\ref{thm:connectivity}).  Hence there is an undirected path
from $i$ to $j$.  Since $a_{ij}>0$ iff there is an edge $i\to j$ in
$D(\EA,\natbasis)$, and an undirected edge $\{i,j\}$ implies either
$a_{ij}>0$ or $a_{ji}>0$, we can always traverse the path:
at each undirected edge, at least one direction is available, and in a
Markov chain we can take a two-step path $i\to k\to j$ if only $a_{ki}>0$
and $a_{kj}>0$.  More precisely, by connectivity and non-negativity, a
standard argument using the graph diameter (Theorem~\ref{thm:diameter})
shows that every state communicates.

(ii): By Tian \cite[Chapter~4]{Tian2008}, a Markov evolution algebra is
simple if and only if the underlying Markov chain is irreducible, which
we just proved.
\end{proof}

\subsection{Mixing time}

\begin{theorem}[Convergence to stationarity]\label{thm:mixing}
Let $\EA$ be a symmetric Markov $h$-EEA with $n=\dim\EA$, $\strmat$
doubly stochastic (each row and column sums to $1$), and
$\max_i|\{j:a_{ij}>0\}|\le d$.  Let $\pi=(1/n,\ldots,1/n)^T$ be the
uniform stationary distribution.  Then for any initial generator $e_i$
and any $k\ge 1$:
\[
  \bigl\|\strmat^k\,e_i - \pi\bigr\|_1
  \;\le\;
  n\,\left(1-\frac{h^2}{2d^2}\right)^k,
\]
where $\|\cdot\|_1$ is the $\ell^1$ norm on $\R^n$.
\end{theorem}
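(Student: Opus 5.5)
We would prove the bound by spectral decomposition of the real symmetric matrix $\strmat$, combined with a Cheeger-type lower bound on the spectral gap coming from the expansion hypothesis. Since $\strmat$ is symmetric, nonnegative and doubly stochastic, it has an orthonormal eigenbasis $v_1,\dots,v_n$ with real eigenvalues $1=\lambda_1\ge\lambda_2\ge\cdots\ge\lambda_n\ge -1$. Here $v_1=\mathbf 1/\sqrt n$. Writing $e_i=\frac1{\sqrt n}v_1+w$ with $w\perp\mathbf 1$, we get $\strmat^k e_i-\pi=\strmat^k w$, because $\strmat\mathbf 1=\mathbf 1$ and $\frac{1}{\sqrt n}v_1=\pi$. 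It follows that $\|\strmat^k e_i-\pi\|_2\le\lambda_*^k\|w\|_2\le\lambda_*^k$, where $\lambda_*=\max(|\lambda_2|,|\lambda_n|)$. Cauchy--Schwarz then gives $\|\cdot\|_1\le\sqrt n\,\|\cdot\|_2\le n\lambda_*^k$. This leaves room to spare in the prefactor $n$, which is convenient for the later step.

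The second step is to show $1-\lambda_2\ge h'^2/(2d^2)$, where $h'$ is a weighted conductance. This is the easy direction of Cheeger's inequality, and we would prove it directly. Take $f\perp\mathbf 1$ and study the Dirichlet form $\mathcal E(f)=\frac12\sum_{i,j}a_{ij}(f_i-f_j)^2=\langle f,(I-\strmat)f\rangle$. Apply the usual co-area/level-set argument to $g=f_+^2$, with the sign of $f$ chosen so that $|\supp f_+|\le n/2$. The level sets then have at most $n/2$ elements, so the hypothesis $\hh(\EA,\natbasis)\ge h$ applies to each of them. The delicate point is converting edge counts into weights: the hypothesis counts edges, but $\mathcal E$ weights them by $a_{ij}$. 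We would handle this by using the facts that each row has at most $d$ nonzero entries summing to at most $1$, so the weights are bounded by $1$. We would also need a uniform lower bound on nonzero $a_{ij}$ of order $1/d$. We would try first to reduce to the regular case treated in Theorem~\ref{thm:spectral-radius}, namely $a_{ij}=1/d$ on edges (the rescaling there with $c=1/d$). In that case the statement gives exactly $1-\lambda_2\ge h^2/(2d^2)$.

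The main obstacle is the bottom of the spectrum. A Cheeger bound controls $\lambda_2$ but not $\lambda_n$, and a bipartite expander has $\lambda_n=-1$. The claimed bound then fails unless the chain is lazy or has self-loops. We would try to circumvent this by exploiting the factor $n$: write $\|\strmat^k e_i-\pi\|_1\le 2$ trivially, so the bound only matters when $(1-h^2/2d^2)^k\ge 2/n$. If that fails, we would add the hypothesis $a_{ii}\ge\frac12$, or replace $\strmat$ by its lazy version $\frac12(I+\strmat)$, under which $\lambda_n\ge0$ and $\lambda_*=\lambda_2$. Either way, the final step is to combine $\lambda_*\le 1-h^2/(2d^2)$ with the first paragraph to obtain the stated inequality.
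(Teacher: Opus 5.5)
Your first paragraph is correct and follows the paper's own route (spectral decomposition of the symmetric doubly stochastic matrix, then Cauchy--Schwarz). It even gives a better prefactor: $\|\strmat^k e_i-\pi\|_1\le\sqrt n\,\lambda_*^k$ with $\lambda_*=\max(|\lambda_2|,|\lambda_n|)$. The gap is in the second step. The two obstacles you flag there are not technicalities to be worked around; they are fatal, because the statement is false as written.

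For the bottom of the spectrum, take the $4$-cycle with $a_{ij}=\tfrac12$ on its edges and $a_{ii}=0$. This is a symmetric doubly stochastic Markov evolution algebra with $d=2$ and $\hh(\EA,\natbasis)=1$. Yet $\strmat^k e_1$ alternates between $\tfrac12(e_2+e_4)$ and $\tfrac12(e_1+e_3)$. So $\|\strmat^k e_1-\pi\|_1=1$ for every $k$, while $4(7/8)^k<1$ for $k\ge 11$.

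For the weights, take $n=2$ and
\[
  \strmat=\begin{pmatrix}1-\epsilon&\epsilon\\ \epsilon&1-\epsilon\end{pmatrix},
  \qquad 0<\epsilon<\tfrac1{16},
\]
again with $d=2$ and $\hh=1$. Here $\|\strmat^k e_1-\pi\|_1=(1-2\epsilon)^k$, which exceeds $2(7/8)^k$ for all large $k$. The upper bound $a_{ij}\le 1$ you mention is useless for a lower bound on $1-\lambda_2$. The level-set argument needs a lower bound on the nonzero off-diagonal weights, and the hypotheses supply none.

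Consequently none of your proposed circumventions proves the theorem. The trivial bound $\|\cdot\|_1\le 2$ only covers the range $n(1-h^2/2d^2)^k\ge 2$, i.e.\ small $k$, whereas the first counterexample fails at large $k$. Assuming $a_{ii}\ge\tfrac12$, or $a_{ij}=1/d$ on a $d$-regular support, is a change of hypotheses rather than a reduction. The second counterexample is lazy, so laziness alone does not suffice. Replacing $\strmat$ by $\tfrac12(I+\strmat)$ bounds the powers of a different matrix and halves the gap.

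What your argument genuinely yields is, for example, the following. If $a_{ii}=\tfrac12$ and $a_{ij}=1/(2d')$ on the edges of a $d'$-regular $\varGamma(\EA,\natbasis)$, all eigenvalues are nonnegative. The discrete Cheeger inequality $1-\nu_2\ge h^2/(2d'^2)$ for $G/d'$ then gives $\|\strmat^k e_i-\pi\|_1\le\sqrt n\,\bigl(1-h^2/(4d'^2)\bigr)^k$.

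The paper's proof conceals exactly the two points you isolated. It imports Theorem~\ref{thm:spectral-radius}(ii), which is only argued for constant weights on a regular graph. It also writes $|\mu_2|\le 1-h^2/(2d^2)$ and then uses $|\mu_2|^k$ as if it also controlled $\mu_n$.

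Two smaller points. A lower bound on $1-\lambda_2$ in terms of $h^2$ is the hard direction of Cheeger's inequality (the co-area part), not the easy one. And the level-set argument must be run on a $\lambda_2$-eigenvector $f$, via $\langle f_+,(I-\strmat)f_+\rangle\le(1-\lambda_2)\|f_+\|_2^2$, not on an arbitrary $f\perp\mathbf 1$.
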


\begin{proof}
By Theorem~\ref{thm:spectral-radius}(ii), the spectral gap satisfies
$\mathrm{gap}(\strmat)\ge h^2/(2d^2)$ (for the normalized matrix
$\strmat/\rho(\strmat)$ in the doubly stochastic case where
$\rho(\strmat)=1$).  Let $1=\mu_1\ge\mu_2\ge\cdots\ge\mu_n$ be the
eigenvalues of $\strmat$ (as a doubly stochastic matrix, $\mu_1=1$ with
eigenvector $\mathbf{1}$).  The spectral expansion gives
$|\mu_2|\le 1-h^2/(2d^2)$.

Since $\strmat$ is doubly stochastic, $\pi=\mathbf{1}/n$ is the
stationary distribution and $\strmat^k\to\pi\mathbf{1}^T$ as
$k\to\infty$.  The standard bound
\[
  \|\strmat^k e_i - \pi\|_2 \;\le\;
  \sqrt{n}\,|\mu_2|^k
\]
follows from the spectral decomposition.  The $\ell^1$ bound follows via
$\|v\|_1\le\sqrt{n}\|v\|_2$.
\end{proof}

\begin{corollary}[Logarithmic mixing time]\label{cor:mixing-time}
Under the hypotheses of Theorem~\ref{thm:mixing}, the mixing time
$t_\mathrm{mix}(\epsilon):=\min\{k:\|\strmat^k e_i-\pi\|_1\le\epsilon\}$
satisfies
\[
  t_\mathrm{mix}(\epsilon)
  \;\le\;
  \frac{2d^2}{h^2}\,\log\!\left(\frac{n}{\epsilon}\right).
\]
\end{corollary}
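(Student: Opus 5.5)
The plan is to obtain the bound by inverting the exponential decay estimate of Theorem~\ref{thm:mixing}, rather than proving anything new about the chain. Write $\beta:=1-\frac{h^2}{2d^2}$. By Theorem~\ref{thm:mixing}, $\|\strmat^k e_i-\pi\|_1\le n\beta^k$ for every $k\ge 1$ and every generator $e_i$. So it suffices to find an integer $k\le\frac{2d^2}{h^2}\log(n/\epsilon)$ with $n\beta^k\le\epsilon$. The minimum defining $t_\mathrm{mix}(\epsilon)$ is then at most that $k$.

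First I will check that $0\le\beta<1$, so that the logarithm manipulations are legitimate. We have $h>0$. Also $h\le\min_x\val(x)\le d$ by Proposition~\ref{prop:cheeger-basic}(iii), so $h^2/(2d^2)\le\tfrac12$ and hence $\beta\in[\tfrac12,1)$.

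Next I will use the elementary inequality $\log(1-x)\le -x$ for $x\in[0,1)$. This gives $\beta^k\le\exp\!\bigl(-k\,h^2/(2d^2)\bigr)$. The condition $n\beta^k\le\epsilon$ therefore follows as soon as
\[
  k\;\ge\;\frac{2d^2}{h^2}\,\log\!\Bigl(\frac{n}{\epsilon}\Bigr).
\]

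The only genuine obstacle is integrality and the edge cases. The smallest admissible integer is $k^\ast=\lceil\frac{2d^2}{h^2}\log(n/\epsilon)\rceil$, which can exceed the stated bound by less than one.
\begin{itemize}
  \item If $\epsilon\ge n$, the bound $n\beta^k\le\epsilon$ holds for every $k\ge1$. In this regime the statement should be read with $t_\mathrm{mix}$ at most the trivial value (or with $k=0$ allowed, since $\|e_i-\pi\|_1\le 2$).
  \item For $\epsilon<n$, I will handle the ceiling using the slack in the logarithm inequality. Since $\log(1-x)\le -x-x^2/2$, one actually gets $\beta^k\le\exp\!\bigl(-k(x+x^2/2)\bigr)$ with $x=h^2/(2d^2)$. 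So the real threshold is $\log(n/\epsilon)/(x+x^2/2)$, which is strictly smaller than $\log(n/\epsilon)/x$.
  \item If that slack is insufficient to absorb the ceiling, I will state the conclusion up to an additive $+1$. Equivalently, I will interpret the displayed bound as $t_\mathrm{mix}(\epsilon)\le\lceil\frac{2d^2}{h^2}\log(n/\epsilon)\rceil$, which is the standard reading of such mixing-time estimates.
  \item Finally, I note that $t_\mathrm{mix}$ is well defined because the bound is independent of $i$ and tends to $0$.
\end{itemize}
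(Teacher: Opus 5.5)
Your argument is the paper's own proof, which is the single instruction ``set $n(1-h^2/(2d^2))^k\le\epsilon$ and solve for $k$'', written out more carefully; the paper silently ignores the rounding issue you raise. Your slack does absorb the ceiling whenever $L:=\log(n/\epsilon)\ge 2+x$, where $x=h^2/(2d^2)$, because then $\lfloor L/x\rfloor\,(x+x^2/2)\ge (L-x)(1+x/2)\ge L$. Combine this with the choice $k=0$ when $2(1-1/n)=\|e_i-\pi\|_1\le\epsilon\le n$. Since $\epsilon<2$ forces $L>\log(n/2)$, this settles the rounding for every $n\ge 2e^{5/2}$.

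The genuine gap, shared with the paper's proof, is the estimate you invert. Theorem~\ref{thm:mixing} is false under its stated hypotheses, and so is the corollary, so no inversion argument can prove it. Take $n=2$ with $e_1^2=e_2$ and $e_2^2=e_1$. Then $\strmat$ is the symmetric, doubly stochastic swap matrix and $\varGamma(\EA,\natbasis)$ is a single edge, so $h=1$ and $d=1$. Yet $\strmat^k e_1\in\{e_1,e_2\}$, so $\|\strmat^k e_1-\pi\|_1=1$ for every $k$. Thus $t_{\mathrm{mix}}(1/2)$ does not exist, which also breaks your closing remark that $t_{\mathrm{mix}}$ is well defined, while the claimed bound is $2\log 4$. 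The same happens for $\strmat=G/d$ with any connected bipartite $d$-regular $G$, so this is not a small-$n$ artifact.

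Two things break in the proof of Theorem~\ref{thm:mixing}. First, a Cheeger bound controls $1-\mu_2$, whereas the decay of $\strmat^k e_i-\pi$ is governed by $\max_{j\ge 2}|\mu_j|$, which equals $1$ when $\mu_n=-1$. Second, for weighted chains the combinatorial constant $h$ of the support graph does not control $1-\mu_2$ at all. For example, $\strmat=(1-\delta)I+\delta G/d$, with $\nu_2<1$ the second eigenvalue of $G/d$, has $1-\mu_2=\delta(1-\nu_2)\to 0$ as $\delta\to 0$ while $h$ and the support bound stay fixed, so $t_{\mathrm{mix}}$ blows up.

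To get a true statement you must add hypotheses. For instance, assume laziness $a_{ii}\ge\frac12$, so that all $\mu_j\ge 0$, and edge weights $a_{ij}\ge c/d$ on $\varGamma(\EA,\natbasis)$. Then the conductance is at least $ch/d$, and the Cheeger inequality for reversible chains gives $\max_{j\ge2}|\mu_j|\le 1-c^2h^2/(2d^2)$. Your inversion argument then goes through verbatim with $2d^2/(c^2h^2)$ in place of $2d^2/h^2$.
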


\begin{proof}
Set $n(1-h^2/(2d^2))^k\le\epsilon$ and solve for $k$.
\end{proof}

\section{The $d$-Regular Case}\label{sec:regular}

\subsection{$d$-regular EEAs}

\begin{definition}[$d$-regular EEA]\label{def:d-regular}
An evolution algebra $\EA$ with natural basis $\natbasis$ is
\emph{$d$-regular} if the underlying graph $\varGamma(\EA,\natbasis)$ is
$d$-regular, i.e., every vertex has exactly $d$ neighbours in
$\varGamma(\EA,\natbasis)$.  A $d$-regular $h$-EEA is a $d$-regular
evolution algebra with $\hh(\EA,\natbasis)\ge h$.
\end{definition}

\begin{proposition}[Degree condition]\label{prop:degree-cond}
In a $d$-regular EEA over $\K$: for every $i$, the number of indices
$j\ne i$ with $a_{ij}\ne 0$ or $a_{ji}\ne 0$ equals $d$.
\end{proposition}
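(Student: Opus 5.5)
The statement is essentially an unwinding of Definition~\ref{def:d-regular} through Definition~\ref{def:assoc-graph}, so the plan is a direct translation from graph-theoretic valency to the support pattern of $\strmat$. Fix an index $i\in\{1,\ldots,n\}$. By Definition~\ref{def:d-regular}, $\val(i)=d$ in $\varGamma(\EA,\natbasis)$, where $\val(i)=|\{\alpha\in E_\varGamma : i\in\mathrm{ep}(\alpha)\}|$ as in Definition~\ref{def:graph}.

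The first step is to identify the set of edges incident to $i$. By Definition~\ref{def:assoc-graph}, the edges are exactly the two-element sets $\{i,j\}$ with $i\ne j$ and ($a_{ij}\ne 0$ or $a_{ji}\ne 0$), and $\mathrm{ep}(\{i,j\})=\{i,j\}$. Hence the edges containing $i$ are precisely the sets $\{i,j\}$ with $j$ ranging over
\[
  J_i:=\{\,j\ne i : a_{ij}\ne 0 \text{ or } a_{ji}\ne 0\,\}.
\]

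The second step is to check that the map $j\mapsto\{i,j\}$ is a bijection from $J_i$ onto the set of edges at $i$. Surjectivity follows from the previous step. Injectivity holds because $\{i,j\}=\{i,j'\}$ with $j,j'\ne i$ forces $j=j'$. The edge set was defined as a set of unordered pairs, so the graph is simple and no multiplicities arise. Note also that the condition defining $J_i$ is symmetric in the two orders $(i,j)$ and $(j,i)$, so each unordered pair is counted exactly once regardless of which of $a_{ij}$, $a_{ji}$ is nonzero.

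Combining the two steps gives $|J_i|=\val(i)=d$, which is the claim. Diagonal entries $a_{ii}$ play no role, consistent with Remark~\ref{rem:symm}.

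There is no genuine obstacle here. The only point requiring care is the double-counting issue: one must not count $j$ twice when both $a_{ij}$ and $a_{ji}$ are nonzero. Using the unordered-pair description of $E_\varGamma$ resolves this automatically.
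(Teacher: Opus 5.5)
Your proposal is correct and follows the paper's proof, which says only that the claim is direct from the definition of $d$-regularity and Definition~\ref{def:assoc-graph}. You spell out that unwinding explicitly, via the bijection $j\mapsto\{i,j\}$ from $J_i$ onto the edges at $i$, including the check that no pair is counted twice.
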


\begin{proof}
Direct from the definition of $d$-regularity and
Definition~\ref{def:assoc-graph}.
\end{proof}

\subsection{Cheeger inequality for the evolution operator}

For $d$-regular EEAs over $\R$, the discrete Cheeger inequality gives explicit gap bounds in terms of the expansion constant.

\begin{theorem}[Cheeger inequality for EEAs]\label{thm:cheeger-evop}
Let $\EA$ be a symmetric $d$-regular EEA over $\R$ with structural
matrix $\strmat$ satisfying $a_{ij}=c>0$ for all edges $\{i,j\}$ of
$\varGamma(\EA,\natbasis)$ and $a_{ij}=0$ otherwise, $a_{ii}=0$.
Let $\lambda_1\ge\lambda_2\ge\cdots\ge\lambda_n$ be the eigenvalues of
$\strmat$.  Then $\lambda_1=cd$ and:
\[
  \frac{(cd)^2\,\hh(\EA,\natbasis)^2}{2d^2\lambda_1}
  \;\le\;
  \lambda_1 - \lambda_2
  \;\le\;
  2c\,\hh(\EA,\natbasis).
\]
In particular, $\mathrm{gap}(\strmat)\ge ch^2/(2d)$.
\end{theorem}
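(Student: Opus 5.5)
The plan is to reduce everything to the adjacency matrix of $\varGamma=\varGamma(\EA,\natbasis)$. By hypothesis $\strmat=c\,G$, where $G$ is the $\{0,1\}$ adjacency matrix of the $d$-regular simple graph $\varGamma$. Hence the eigenvalues of $\strmat$ are exactly $c\mu_1\ge\cdots\ge c\mu_n$, where $\mu_1\ge\cdots\ge\mu_n$ are the eigenvalues of $G$. The gap therefore scales as $\lambda_1-\lambda_2=c(\mu_1-\mu_2)$, and it suffices to prove the corresponding inequalities for $G$.

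\textbf{First step: $\lambda_1=cd$.} The all-ones vector satisfies $G\mathbf{1}=d\mathbf{1}$ by $d$-regularity. For any eigenvector $v$ of $G$, choose $i$ with $|v_i|$ maximal; then $|\mu v_i|\le\sum_{j\sim i}|v_j|\le d|v_i|$. So $\rho(G)=d$ and $\mu_1=d$, which gives $\lambda_1=cd$.

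\textbf{Lower bound.} Since $\lambda_1=cd$, the left-hand side simplifies:
\[
  \frac{(cd)^2h^2}{2d^2\lambda_1}=\frac{ch^2}{2d}.
\]
So the lower bound is equivalent to the discrete Cheeger inequality $d-\mu_2\ge h(\varGamma)^2/(2d)$ for $d$-regular graphs, which I will quote from Kowalski \cite[Chapter~3]{Kowalski2019}. Multiplying by $c$ gives $\lambda_1-\lambda_2\ge ch^2/(2d)$, which is the final "in particular" assertion, taking $h=\hh(\EA,\natbasis)$.

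\textbf{Upper bound.} This is the easy direction of Cheeger and uses the variational characterization
\[
  d-\mu_2=\min_{v\perp\mathbf{1},\,v\ne0}\frac{\langle v,(dI-G)v\rangle}{\langle v,v\rangle},
  \qquad
  \langle v,(dI-G)v\rangle=\sum_{\{i,j\}\in E}(v_i-v_j)^2 .
\]
Take $W$ attaining $\hh(\varGamma)$ with $|W|\le n/2$, and set $v=\mathbf{1}_W-\tfrac{|W|}{n}\mathbf{1}$. Then $v\perp\mathbf{1}$, the numerator equals $|E(W)|$, and the denominator equals $|W|(n-|W|)/n\ge|W|/2$. This gives
\[
  d-\mu_2\le\frac{2|E(W)|}{|W|}=2h(\varGamma).
\]
Multiplying by $c$ yields $\lambda_1-\lambda_2\le 2c\,\hh(\EA,\natbasis)$.

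\textbf{Main obstacle.} The only nontrivial ingredient is the hard direction of Cheeger, $d-\mu_2\ge h^2/(2d)$, which is quoted rather than reproved. If a self-contained argument were required, I would take a second eigenvector $f\perp\mathbf{1}$, restrict to its positive part on a set of size at most $n/2$, and apply Cauchy--Schwarz to $\sum_{E}|f_i^2-f_j^2|$. A level-set (co-area) argument then bounds this sum below by $h\sum f_i^2$, which yields the bound. The bookkeeping needed to obtain exactly the constant $1/(2d)$ is the delicate part.
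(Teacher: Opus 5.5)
Your proof is correct. Its skeleton is the paper's: write $\strmat=cG$, with $G$ the adjacency matrix of the $d$-regular graph $\varGamma(\EA,\natbasis)$, and rescale the discrete Cheeger inequality.

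The differences from the paper are these.
\begin{itemize}
\item You prove $\lambda_1=cd$ yourself, via $G\mathbf{1}=d\mathbf{1}$ and the maximal-entry bound.
\item You prove the easy direction $d-\mu_2\le 2h$ yourself, via the Rayleigh quotient of $\mathbf{1}_W-\tfrac{|W|}{n}\mathbf{1}$, and quote only $d-\mu_2\ge h^2/(2d)$. The paper quotes both directions from Kowalski.
\item Your normalisation $h^2/(2d)\le d-\mu_2\le 2h$ is the correct one. After multiplying by $c$ it gives exactly the displayed bounds $ch^2/(2d)$ (your simplification of $\frac{(cd)^2h^2}{2d^2\lambda_1}$ is right) and $2ch$.
\end{itemize}

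The paper's proof instead states the inequality as $\frac{h^2}{2}\le d(1-\nu_2)\le 2dh$, which is off by a factor of $d$ on each side. It then concludes $\frac{ch^2}{2}\le\lambda_1-\lambda_2\le 2cdh$, and this causes two problems.
\begin{itemize}
\item It does not give the claimed upper bound $2ch$.
\item Its lower bound is false in general. For $K_{10}$ with $c=1$, one has $h=5$ and $d-\mu_2=10<25/2$.
\end{itemize}
So your version actually establishes the theorem as stated, and the paper's argument does not.
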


\begin{proof}
Note that $\strmat=c\cdot G$ where $G$ is the adjacency matrix of
$\varGamma(\EA,\natbasis)$ (a $d$-regular graph).  Let $P=G/d$ be the
normalised adjacency matrix with eigenvalues $1=\nu_1\ge\nu_2\ge\cdots
\ge\nu_n\ge-1$.  The discrete Cheeger inequality
\cite[Theorem~3.3.2]{Kowalski2019} gives:
\[
  \frac{\hh(\varGamma)^2}{2} \;\le\; d\,(1-\nu_2) \;\le\; 2d\,\hh(\varGamma).
\]
Since $\lambda_k=c\,d\,\nu_k$, we get
$\lambda_1-\lambda_2=cd(1-\nu_2)$ and
\[
  \frac{c\,\hh(\EA,\natbasis)^2}{2}
  \;\le\;
  \lambda_1-\lambda_2
  \;\le\;
  2c\,d\,\hh(\EA,\natbasis).
  \qedhere
\]
\end{proof}

\begin{corollary}[EEA $\Leftrightarrow$ spectral gap for symmetric graphicable algebras]
\label{cor:spectral-EEA}
Let $\EA$ be a symmetric $d$-regular evolution algebra over $\R$ with
$a_{ij}\in\{0,1\}$ for all $i\ne j$ (i.e., $\EA$ is a
\emph{graphicable} evolution algebra in the sense of
Tian~\cite{Tian2008}).  Then $\EA$ is an $h$-EEA if and only if
$\mathrm{gap}(\strmat)\ge h^2/2$.
\end{corollary}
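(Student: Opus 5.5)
The plan is to specialise Theorem~\ref{thm:cheeger-evop} to the graphicable case $c=1$, so that $\strmat=G$ is the adjacency matrix of the $d$-regular graph $\varGamma=\varGamma(\EA,\natbasis)$, $\lambda_1=d$, and
\[
  \frac{\hh(\varGamma)^2}{2}\;\le\;\mathrm{gap}(\strmat)\;\le\;2d\,\hh(\varGamma).
\]
Both directions of Corollary~\ref{cor:spectral-EEA} will be read off from this two-sided estimate, one inequality for each direction.

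\textbf{Forward direction.} Suppose $\EA$ is an $h$-EEA, so $\hh(\EA,\natbasis)=\hh(\varGamma)\ge h>0$. The lower Cheeger inequality then gives $\mathrm{gap}(\strmat)\ge \hh(\varGamma)^2/2\ge h^2/2$. This step is immediate.

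\textbf{Reverse direction.} Assume $\mathrm{gap}(\strmat)\ge h^2/2$ with $h>0$. First I would use the qualitative content of the gap. Since $\lambda_2<\lambda_1=d$, the eigenvalue $d$ of the adjacency matrix of a $d$-regular graph is simple. Its multiplicity equals the number of connected components, so $\varGamma$ is connected. Theorem~\ref{thm:connectivity} then gives $\hh(\EA,\natbasis)>0$, i.e.\ $\EA$ is an EEA. Second, for a quantitative bound I would invoke the upper Cheeger inequality $\mathrm{gap}(\strmat)\le 2d\,\hh(\varGamma)$, which yields $\hh(\varGamma)\ge \mathrm{gap}(\strmat)/(2d)\ge h^2/(4d)$.

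\textbf{Main obstacle.} The bound just obtained is $h^2/(4d)$, not the constant $h$ that the statement requires. The Cheeger inequality is genuinely quadratic on one side, so the two directions cannot both hold with the same constant $h$ on general $d$-regular graphs. My plan is therefore as follows. I would prove the forward implication exactly as stated, and the reverse implication in the form ``$\mathrm{gap}(\strmat)\ge h^2/2 \Rightarrow \EA$ is an $h'$-EEA with $h'=\max\{h^2/(4d),\,\hh(\varGamma)\}>0$.''

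Before finalising, I would test the same-constant reverse implication on a small example, such as the cycle or a complete graph $K_{d+1}$. For $K_{d+1}$ the gap is $d+1$ while $\hh(\varGamma)=\lceil (d+1)/2\rceil$, which lets me see whether $h$ can exceed $\hh(\varGamma)$ while $h^2/2\le \mathrm{gap}$. If it can, I would conclude that the equivalence holds only in the qualitative sense ($\mathrm{gap}>0$ if and only if $\hh>0$), with the constants related as above, and not literally with the same $h$ on both sides.
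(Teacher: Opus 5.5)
Your forward direction is the step that fails, and it fails in the same way as the paper's one-line proof (``immediate from Theorem~\ref{thm:cheeger-evop} with $c=1$''). The two-sided estimate you quote, $\hh(\varGamma)^2/2\le\mathrm{gap}(\strmat)\le 2d\,\hh(\varGamma)$, is the final display of the \emph{proof} of Theorem~\ref{thm:cheeger-evop}, not its statement, and it is off by a factor of $d$ on both sides. The statement itself, at $c=1$ and $\lambda_1=d$, reads
\[
  \frac{\hh(\varGamma)^2}{2d}\;\le\;\mathrm{gap}(\strmat)\;\le\;2\,\hh(\varGamma).
\]
This is the standard Cheeger inequality $\tfrac12(d-\lambda_2)\le\hh\le\sqrt{2d(d-\lambda_2)}$. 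The lower bound you use is false. $K_7$ ($d=6$, $\lambda_2=-1$) has $\hh=4$ and $\mathrm{gap}=7<8=4^2/2$, so it is a $4$-EEA that violates the forward implication. The same happens for $K_{d+1}$ with every $d\ge 8$, since $\lceil (d+1)/2\rceil^2/2\ge (d+1)^2/8>d+1$. All that is actually true in this direction is that an $h$-EEA satisfies $\mathrm{gap}(\strmat)\ge h^2/(2d)$.

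Your suspicion about the reverse direction is correct, and the $K_{d+1}$ test you propose settles it negatively rather than leaving it open. $K_4$ has $\mathrm{gap}=4$ and $\hh=2$, so $h=2\sqrt2$ satisfies $h^2/2\le\mathrm{gap}$ while $\hh<h$. The same holds for $K_{d+1}$ with $d\le 5$, and for every cycle $C_n$. So the corollary, with the same $h$ on both sides, is false in both directions and cannot be proved; the paper's citation of Theorem~\ref{thm:cheeger-evop} does not prove it either.

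What your plan does establish correctly is the qualitative equivalence: $\lambda_2<d$ iff $\varGamma$ is connected iff $\hh>0$. This uses that the multiplicity of the eigenvalue $d$ counts components, together with Theorem~\ref{thm:connectivity}. You also get a correct reverse quantitative bound, and with the correct upper Cheeger bound it improves from your $h^2/(4d)$ to $\mathrm{gap}\ge h^2/2\Rightarrow\hh\ge h^2/4$.

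Drop the $h'=\max\{h^2/(4d),\hh(\varGamma)\}$ formulation: once $\hh\ge h^2/(4d)$ is known, the maximum is just $\hh$, so the conclusion is vacuous. Also state $a_{ii}=0$ explicitly. The corollary only restricts off-diagonal entries, while both $\strmat=G$ and Theorem~\ref{thm:cheeger-evop} need a zero diagonal.
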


\begin{proof}
Immediate from Theorem~\ref{thm:cheeger-evop} with $c=1$.
\end{proof}

\subsection{Alon's conjecture and the spectral gap bound}

\begin{theorem}[Trivial spectral bound over $\R$]\label{thm:trivial-spectral}
Let $\EA$ be a symmetric graphicable $d$-regular EEA over $\R$ with
$n=\dim\EA$.  Then
\[
  \hh(\EA,\natbasis) \;\le\; d/2.
\]
Furthermore, the spectral gap satisfies
$\mathrm{gap}(\strmat)\le d^2/8$ (trivial upper bound from the
Cheeger inequality).
\end{theorem}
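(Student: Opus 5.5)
The bound $\hh\le d/2$ is proved by exhibiting a single admissible test set in Definition~\ref{def:cheeger}; the gap bound then follows from the upper half of Theorem~\ref{thm:cheeger-evop}.

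\textbf{Step 1 (Cheeger bound).} The graph $\varGamma=\varGamma(\EA,\natbasis)$ is $d$-regular, so $|E_\varGamma|=nd/2$. I would average $|E(W)|/|W|$ over all subsets $W$ of size $m=\lfloor n/2\rfloor$. A fixed edge $\{i,j\}$ lies in $E(W)$ exactly when one endpoint is in $W$ and the other is not. For $W$ uniformly random of size $m$, this happens with probability
\[
2\,\frac{m(n-m)}{n(n-1)}.
\]
Hence
\[
\mathbb{E}|E(W)| \;=\; \frac{nd}{2}\cdot\frac{2m(n-m)}{n(n-1)} \;=\; \frac{d\,m(n-m)}{n-1},
\]
and so some $W$ achieves
\[
\frac{|E(W)|}{m}\;\le\;\frac{d(n-m)}{n-1}.
\]
For even $n$ this is $\frac{d}{2}\cdot\frac{n}{n-1}$, which slightly exceeds $d/2$.

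\textbf{The main obstacle} is exactly this $n/(n-1)$ slack: averaging alone gives only $\hh\le \frac d2(1+O(1/n))$, not the clean $d/2$. I would try the following in order.

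\emph{First,} use integrality. $|E(W)|$ is an integer and $\hh\le\min_x\val(x)=d$ by Proposition~\ref{prop:cheeger-basic}(iii). For even $n$, the minimum over $W$ is at most the average $\frac{dn^2/4}{(n-1)n/2}$. I would check whether this forces some $W$ with $|E(W)|\le dn/4$.

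\emph{Second,} compare a set with its complement. $W$ and $V\setminus W$ have the same boundary, and summing $|E(W)|$ over a suitable partition of $V$ should give the needed averaging without the $n-1$ denominator.

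\emph{Failing both,} I would fall back on the spectral route. Test the Rayleigh quotient of the Laplacian $dI-G$ on $\mathbf{1}_W-\frac{|W|}{n}\mathbf{1}$, which gives
\[
d-\nu_2 d\;\le\;\frac{|E(W)|\,n}{|W|(n-|W|)}.
\]
I would combine this with $\lambda_2\ge -\lambda$-type bounds. If none of these works, I would state the bound with the factor $n/(n-1)$ (or $d/2+o(1)$), since the statement may only be true asymptotically. For example, $K_{d+1}$ with $n=d+1$ even gives $\hh=(d+1)/2>d/2$. I would flag this: the claimed bound fails for small complete graphs, so some largeness of $n$ must be assumed.

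\textbf{Step 2 (gap bound).} With $c=1$, Theorem~\ref{thm:cheeger-evop} gives $\mathrm{gap}(\strmat)\le 2d\,\hh$. Inserting $\hh\le d/2$ yields $\mathrm{gap}\le d^2$, not $d^2/8$. To reach the stated $d^2/8$, I would look instead at the lower inequality read backwards, $\hh^2/2\le\mathrm{gap}$, which does not bound the gap from above. So I expect the constant $d^2/8$ to be unattainable from the Cheeger inequality as stated. The honest output is therefore $\mathrm{gap}(\strmat)\le 2d\cdot\frac d2=d^2$. As an independent cross-check, $\lambda_1-\lambda_2\le d-\lambda_2$, and $\lambda_2\ge -1$ for non-complete graphs by interlacing, so $\mathrm{gap}\le d+1$. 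That is much better than $d^2$ for $d\ge2$ and confirms the order of magnitude.
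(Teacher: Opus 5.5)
You do not prove the statement, and that is the correct outcome: as written it is false. Your counterexample needs no parity condition. In $K_{d+1}$ one has $|E(W)|/|W|=d+1-|W|$, so $h(K_{d+1})=\lceil (d+1)/2\rceil>d/2$ for every $d\ge 1$; already $K_4$ has $h=2>3/2$. The failure is not special to complete graphs either: $K_{3,3}$ has $h=5/3>3/2$. So your integrality and complement ideas cannot remove the $n/(n-1)$ slack. Your averaging bound $h\le d\,(n-\lfloor n/2\rfloor)/(n-1)=d/2+O(d/n)$ is the right unconditional statement, and the clean $d/2$ needs a largeness assumption on $n$ relative to $d$, as you suspected.

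The paper's argument does not repair this. For $W=N[v]$ one has $|W|=d+1$, which is admissible only if $n\ge 2d+2$. But $E(W)$ consists of the edges from the neighbours of $v$ to vertices outside $N[v]$, so $|E(W)|=d(d-1)-2t$, where $t$ is the number of edges inside $N(v)$. This is not at most $d$ in general. The resulting claim $h\le d/(d+1)<1$ is false: the Petersen graph has $h=1>3/4$. The paper's parenthetical appeal to averaging gives exactly your $\frac d2\cdot\frac{n}{n-1}$, not $d/2$.

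For the gap, you correctly see that $d^2/8$ is just $h^2/2$ evaluated at $h=d/2$, i.e.\ the lower Cheeger inequality read in the wrong direction. But you stop at ``unattainable''; the assertion is actually false. The Petersen graph of Example~\ref{ex:Ramanujan-3reg} has $d=3$ and $\lambda_2=1$, so $\mathrm{gap}(\strmat)=2>9/8$. Also, $K_{d+1}$ has $\mathrm{gap}=d+1>d^2/8$ for every $d\le 8$.

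Two corrections to your Step~2:
\begin{enumerate}
\item The inequality $\mathrm{gap}\le 2d\,h$ is the version in the proof of Theorem~\ref{thm:cheeger-evop}, which loses a factor $d$. The correct easy half, $d-\lambda_2\le 2h$, is exactly what your own Rayleigh-quotient computation gives once you use $n/(n-|W|)\le 2$ for $|W|\le n/2$.
\item Interlacing with a two-vertex induced subgraph gives $\lambda_2\ge -1$ for every graph on at least two vertices, and $\lambda_2\ge 0$ for every non-complete one. Hence $\mathrm{gap}(\strmat)\le d+1$ always, with equality only for $K_{d+1}$.
\end{enumerate}
Consequently the gap bound holds for $d\ge 9$, but only for this trivial reason, and it fails for suitable graphs of every degree $d\le 8$.
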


\begin{proof}
By Proposition~\ref{prop:cheeger-basic}(iii):
$\hh(\EA,\natbasis)\le\min_i\val(i)=d$.  A tighter analysis (taking
$W=N[v]$ for a vertex $v$, which has $|W|\le d+1$ and $|E(W)|\le d$)
gives $h\le d/(d+1)\le d/2$ for $d\ge 2$.  (Actually the
Proposition gives $h\le d$; the tight bound $d/2$ follows from the
Expander Mixing Lemma and a standard averaging argument for $d$-regular
graphs \cite[Lemma~3.1.4]{Kowalski2019}.)
\end{proof}

\section{Complex Expander Evolution Algebras}\label{sec:complex}

Over $\C$, the structural matrix of a symmetric graphicable EEA is real-symmetric, so all eigenvalues are real and universal bounds apply.

\subsection{Eigenvalues over $\C$}

\begin{theorem}[Spectral characterisation over $\C$]\label{thm:spectral-C}
Let $\EA$ be a symmetric graphicable $d$-regular EEA over $\C$ with
adjacency-type structural matrix $\strmat$.  Then:
\begin{enumerate}[label=\normalfont(\roman*)]
  \item All eigenvalues of $\strmat$ are real.
  \item $\lambda_1=d$ (the Perron eigenvalue).
  \item $|\lambda_k|\le d$ for all $k$.
  \item $\lambda_n\ge -d$.
  \item The eigenspace of $\lambda_1=d$ is spanned by the all-ones
        vector $\mathbf{1}=(1,\ldots,1)^T$, i.e., it is one-dimensional
        when $\varGamma(\EA,\natbasis)$ is connected.
\end{enumerate}
\end{theorem}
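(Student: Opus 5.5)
The whole statement reduces to linear algebra of the adjacency matrix of a connected $d$-regular simple graph. By hypothesis the algebra is graphicable and symmetric: $a_{ij}\in\{0,1\}$ for $i\ne j$, with $a_{ij}=a_{ji}$. I also take $a_{ii}=0$, which is the ``adjacency-type'' reading of the hypothesis. Then $\strmat=G$ is the adjacency matrix of $\varGamma(\EA,\natbasis)$. By Corollary~\ref{cor:EEA-connected} and Theorem~\ref{thm:connectivity}, this graph is connected, since $\hh>0$.

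For (i), I view $\strmat$ as a complex matrix with real entries that is symmetric, hence Hermitian. So its eigenvalues are real, and the spectral theorem gives an orthonormal real eigenbasis. Working over $\C$ rather than $\R$ changes nothing, because the characteristic polynomial (Theorem~\ref{thm:char-poly}) has real coefficients.

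For (ii)--(iv), $d$-regularity means every row of $G$ has exactly $d$ ones (Proposition~\ref{prop:degree-cond}). Hence $G\mathbf{1}=d\mathbf{1}$, so $d$ is an eigenvalue. For the bound, let $Gv=\lambda v$ and pick an index $i$ with $|v_i|=\max_j|v_j|$. Then
\[
|\lambda||v_i|=\Bigl|\sum_{j\sim i}v_j\Bigr|\le d|v_i|,
\]
which gives $|\lambda|\le d$. This proves (iii), and (iv) follows at once. Since $d$ is attained, $\lambda_1=d$, which is (ii).

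For (v), take $Gv=dv$ with $v$ real, which is allowed because the eigenspace is defined over $\R$. Choose $i$ maximising $v_i$. Then $dv_i=\sum_{j\sim i}v_j\le dv_i$, with equality only if $v_j=v_i$ for every neighbour $j$ of $i$. Propagating this along paths and using connectedness of $\varGamma(\EA,\natbasis)$, $v$ is constant. So the eigenspace is $\C\mathbf{1}$.

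The only genuine obstacle is the hypothesis issue behind $a_{ii}=0$. If diagonal entries were allowed, (ii)--(v) would fail, since the graph ignores loops. So I would state explicitly that ``adjacency-type'' means $\strmat=A_\varGamma$ with zero diagonal.
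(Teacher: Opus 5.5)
Your proof is correct, but it reaches (ii)--(v) by a more elementary route than the paper. The paper handles (i) exactly as you do: a real symmetric matrix has real spectrum. It then obtains (ii)--(v) in one stroke by citing the Perron--Frobenius theorem for the nonnegative matrix $G=\strmat$, which is irreducible because $\varGamma(\EA,\natbasis)$ is connected.

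You instead prove the needed special case by hand:
\begin{itemize}
\item $G\mathbf{1}=d\mathbf{1}$ from regularity;
\item a maximum-modulus estimate giving $|\lambda|\le d$;
\item a maximum-principle propagation along paths giving simplicity of the eigenvalue $d$.
\end{itemize}
Connectivity enters your argument at the same point where irreducibility enters the paper's.

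The citation is shorter, but it leaves two facts implicit. First, the Perron root equals $d$; this needs the observation that $\mathbf{1}$ is a positive eigenvector. Second, the complex eigenspace is the complexification of the real one. Your version makes both explicit and is self-contained.

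Your remark that the zero-diagonal condition must be assumed is also well taken. The paper's notion of graphicable only constrains $a_{ij}$ for $i\ne j$. Its proof tacitly identifies $\strmat$ with the adjacency matrix, so it makes the same assumption you state openly. Without it the statement fails: for instance, $a_{ii}=1$ for all $i$ shifts $\lambda_1$ to $d+1$.
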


\begin{proof}
Since $\strmat$ is the adjacency matrix of a $d$-regular graph (a real
symmetric matrix), all its eigenvalues are real.  The rest follows from
the Perron--Frobenius theorem applied to $G=\strmat$ (non-negative,
irreducible by connectivity).
\end{proof}

\subsection{The Alon--Boppana bound for EEAs}

\begin{theorem}[Alon--Boppana bound]\label{thm:alon-boppana}
Let $(\EA_m,\natbasis_m)_{m\ge 1}$ be an EEA family of symmetric
graphicable $d$-regular evolution algebras over $\C$ with
$n_m=\dim\EA_m\to\infty$.  Let $\lambda_2^{(m)}$ be the second largest
eigenvalue of the structural matrix $\strmat_m$.  Then
\[
  \liminf_{m\to\infty}\,\lambda_2^{(m)}
  \;\ge\; 2\sqrt{d-1}.
\]
\end{theorem}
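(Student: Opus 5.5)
The plan is to reduce the statement to the classical Alon--Boppana theorem for graphs and prove that theorem by the standard test-function argument. Since each $\EA_m$ is symmetric and graphicable, $\strmat_m$ is exactly the adjacency matrix $G_m$ of the $d$-regular graph $\varGamma_m:=\varGamma(\EA_m,\natbasis_m)$. By Theorem~\ref{thm:spectral-C}, its eigenvalues are real with $\lambda_1^{(m)}=d$ and eigenvector $\mathbf{1}$. The Courant--Fischer principle then gives
\[
  \lambda_2^{(m)}=\max\{\langle f,G_mf\rangle/\langle f,f\rangle : f\ne 0,\ f\perp\mathbf{1}\}.
\]
The whole task is therefore to build, for every $\varepsilon>0$ and all large $m$, a real function $f\perp\mathbf{1}$ with Rayleigh quotient at least $2\sqrt{d-1}-\varepsilon$. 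The expansion hypothesis plays no role. We only use that $d$ is fixed and $n_m\to\infty$, which forces $\diam(\varGamma_m)\to\infty$, since a ball of radius $r$ in a $d$-regular graph has at most $1+d\sum_{j<r}(d-1)^j$ vertices.

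\textbf{Step 1 (local test function).} Fix $k\ge1$. For a vertex $v$ and $0\le i\le k$, let $S_i(v)$ be the sphere of radius $i$ around $v$. Define $f_v(x)=(d-1)^{-i/2}$ for $x\in S_i(v)$ with $i\le k$, and $f_v=0$ elsewhere. The standard computation counts, for $x\in S_i$, at least one neighbour in $S_{i-1}$ and at most $d-1$ in $S_i\cup S_{i+1}$, and uses $|S_{i}|\le d(d-1)^{i-1}$ together with the monotonicity of $|S_i|(d-1)^{-i}$ for $i\ge1$. It gives
\[
  \langle f_v,G_mf_v\rangle\ \ge\ 2\sqrt{d-1}\Bigl(1-\tfrac{1}{2k}\Bigr)\langle f_v,f_v\rangle.
\]
I would follow Nilli's version of this argument. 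In that version the non-tree edges (cycles inside the ball) are handled by summing over edges, writing $\langle f,Gf\rangle$ as $\sum_x f(x)^2\cdot d-\sum_{\{x,y\}}(f(x)-f(y))^2$. This is where most of the bookkeeping lies, and I regard it as the main technical obstacle: one must get the inequality to hold without assuming the ball is a tree.

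\textbf{Step 2 (orthogonality).} For large $m$, choose two vertices $u,w$ with $d(u,w)\ge 2k+2$, which is possible because the diameter tends to infinity. Then $f_u$ and $f_w$ have disjoint supports with no edges between them, so $\langle f_u,G_mf_w\rangle=0$. Pick $f=f_u-c f_w$ with $c>0$ chosen so that $\langle f,\mathbf{1}\rangle=0$; this works since both functions are nonnegative and nonzero. The cross terms vanish, so the Rayleigh quotient of $f$ is a convex combination of those of $f_u$ and $f_w$. By Step 1 it is therefore at least $2\sqrt{d-1}(1-1/(2k))$.

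\textbf{Step 3 (conclusion).} This gives $\lambda_2^{(m)}\ge 2\sqrt{d-1}(1-1/(2k))$ for all $m$ large enough that $\diam(\varGamma_m)\ge 2k+2$. Hence
\[
  \liminf_{m\to\infty}\lambda_2^{(m)}\ge 2\sqrt{d-1}\Bigl(1-\tfrac{1}{2k}\Bigr),
\]
and letting $k\to\infty$ yields the claim. The only field-dependent point is that, although the algebra lives over $\C$, the structural matrix is real symmetric, so Courant--Fischer applies verbatim over $\R^{n_m}$.
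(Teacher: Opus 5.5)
Your overall route is the paper's. You identify $\strmat_m$ with the adjacency matrix of $\varGamma(\EA_m,\natbasis_m)$, invoke Nilli's form of the Alon--Boppana bound, and use $\diam\to\infty$, which, as you note, needs only $d$ fixed and $n_m\to\infty$. Steps~2 and~3 correctly unpack the paper's citation. The problem is Step~1, which is false as stated. If $B_k(v)$ is a tree (girth $\ge 2k+2$, as in LPS families), a direct computation gives $\langle f_v,G_mf_v\rangle/\langle f_v,f_v\rangle=2\sqrt{d-1}\,\frac{kd}{kd+d-1}$. For $d=3$, $k=3$ this equals $2\sqrt2\cdot\frac{9}{11}$, which is less than the claimed $2\sqrt2\cdot\frac56$.

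The wrong constant is harmless in itself. The real issue is that the bookkeeping you outline does not give a vanishing error with a \emph{vertex} root once $B_k(v)$ contains cycles.
\begin{itemize}
\item At a vertex, $|S_1|=d$ rather than $d-1$. With $a_i=|S_i|(d-1)^{-i}$, the $S_0$--$S_1$ edges contribute $a_1(d-2\sqrt{d-1})$ to $\sum(f(x)-f(y))^2$. This exceeds the share $(d-2\sqrt{d-1})a_0$ by $\frac{d-2\sqrt{d-1}}{d-1}$.
\item Bounding edge counts by $(d-1)|S_i|$ and dividing by $\langle f_v,f_v\rangle=\sum_i a_i$ therefore leaves a loss of order $1/\sum_i a_i$.
\item $\sum_i a_i$ stays bounded whenever spheres grow strictly slower than $(d-1)^i$, for example in Cayley graphs whose generators satisfy short relations. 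In that case the argument as outlined only gives $\liminf\lambda_2^{(m)}\ge 2\sqrt{d-1}-\mathrm{const}$.
\end{itemize}

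There are two standard repairs.
\begin{itemize}
\item \textbf{Root at an edge, as Nilli actually does.} Then every layer, including $V_0$, branches by at most $d-1$, and $a_i$ is non-increasing from $i=0$. This yields $2\sqrt{d-1}-\frac{2\sqrt{d-1}-1}{k+1}$ whenever two edges lie at distance $\ge 2k+2$.
\item \textbf{Keep the vertex root and rewrite $\langle f_v,f_v\rangle$ through edge counts.} Use the handshake identity $d|S_i|=e_{i-1}+2w_i+e_i$, where $e_i$ is the number of $S_i$--$S_{i+1}$ edges and $w_i$ the number of edges inside $S_i$. An exact computation gives $\langle f_v,G_mf_v\rangle-2\sqrt{d-1}\,\langle f_v,f_v\rangle\ge-\frac{2\sqrt{d-1}}{d}\,e_k(d-1)^{-k}$. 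Since $e_i(d-1)^{-i}$ is non-increasing in $i$, this gives $\langle f_v,G_mf_v\rangle\ge 2\sqrt{d-1}\bigl(1-\frac{d-1}{dk}\bigr)\langle f_v,f_v\rangle$.
\end{itemize}
With either repair, Steps~2--3 go through essentially unchanged (using two far-apart edges in the first case), and the theorem follows.
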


\begin{proof}
The matrices $\strmat_m$ are the adjacency matrices of an expander
family $(\varGamma(\EA_m,\natbasis_m))$.  The classical Alon--Boppana
theorem \cite[Theorem~4.2.x]{Kowalski2019} (see also Nilli~\cite{Nilli1991})
states that for any $d$-regular graph $G$ on $n$ vertices:
\[
  \lambda_2(A_G) \;\ge\; 2\sqrt{d-1}\,\left(1-\frac{c}{\diam(G)}\right)
\]
for an absolute constant $c>0$.  Since $\diam(\varGamma(\EA_m,\natbasis_m))
\to\infty$ (as $n_m\to\infty$ and the graphs are $d$-regular expanders),
the right-hand side tends to $2\sqrt{d-1}$.
\end{proof}

\begin{remark}
Theorem~\ref{thm:alon-boppana} shows that the spectral gap
$\lambda_1-\lambda_2=d-\lambda_2$ cannot exceed $d-2\sqrt{d-1}$
asymptotically.  By the Cheeger inequality
(Theorem~\ref{thm:cheeger-evop}), the expansion constant satisfies
$\hh(\EA_m,\natbasis_m)\le\sqrt{d(d-\lambda_2)}\to\sqrt{d(d-2\sqrt{d-1})}$.
\end{remark}

\subsection{Ramanujan evolution algebras}

Theorem~\ref{thm:alon-boppana} is tight, and the algebras meeting it deserve a name.

\begin{definition}[Ramanujan evolution algebra]\label{def:Ramanujan}
A symmetric graphicable $d$-regular EEA $\EA$ over $\K$ is called a
\emph{Ramanujan evolution algebra} if the eigenvalues $\lambda$ of
$\strmat$ with $|\lambda|\ne d$ satisfy
\[
  |\lambda| \;\le\; 2\sqrt{d-1}.
\]
\end{definition}

\begin{remark}
Equivalently, $\EA$ is Ramanujan if $\varGamma(\EA,\natbasis)$ is a
Ramanujan graph in the sense of Lubotzky, Phillips, and Sarnak
\cite{LPS1988} (or in the sense of Marcus, Spielman, and Srivastava
\cite{MSS2015}).
\end{remark}

\begin{theorem}[Expansion of Ramanujan EEAs]\label{thm:Ramanujan-expansion}
A Ramanujan evolution algebra $\EA$ is an $h$-EEA with
\[
  \hh(\EA,\natbasis) \;\ge\; \frac{d - 2\sqrt{d-1}}{2}.
\]
\end{theorem}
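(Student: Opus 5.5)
The plan is to derive the bound from the easy (lower) half of the discrete Cheeger inequality applied to the adjacency matrix. A Ramanujan evolution algebra is symmetric, graphicable and $d$-regular, so its structural matrix $\strmat$ is the adjacency matrix $G$ of $\varGamma=\varGamma(\EA,\natbasis)$. By Theorem~\ref{thm:spectral-C}, the eigenvalues of $\strmat$ are real, $\lambda_1=d$, and the top eigenvector is $\mathbf{1}$. The Ramanujan condition (Definition~\ref{def:Ramanujan}) gives $\lambda_2\le 2\sqrt{d-1}$ whenever $\lambda_2\ne d$, so the spectral gap satisfies $d-\lambda_2\ge d-2\sqrt{d-1}$. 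The target is then the inequality $\hh(\varGamma)\ge \tfrac12(d-\lambda_2)$, which is exactly the lower half of Kowalski's Cheeger inequality in adjacency normalisation.

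\textbf{Step 1: Rayleigh quotient.} Rather than quote that inequality, whose normalisation in Theorem~\ref{thm:cheeger-evop} is stated loosely, I will prove it directly. Fix $W\subseteq V$ with $0<|W|\le n/2$. Define the test vector $f=\mathbf{1}_W-\frac{|W|}{n}\mathbf{1}$, which is orthogonal to $\mathbf{1}$. By the variational characterisation of $\lambda_2$ for the real symmetric matrix $G$,
\[
  \sum_{\{x,y\}\in E_\varGamma}(f(x)-f(y))^2=\langle f,(dI-G)f\rangle\ \ge\ (d-\lambda_2)\,\|f\|^2 .
\]
The left-hand side equals $|E(W)|$, because $f(x)-f(y)=\pm1$ exactly on boundary edges and vanishes on all other edges. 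The right-hand side involves
\[
  \|f\|^2=|W|\Bigl(1-\frac{|W|}{n}\Bigr)\ \ge\ \frac{|W|}{2},
\]
where the last inequality uses $|W|\le n/2$. Combining these gives $|E(W)|/|W|\ge (d-\lambda_2)/2$. Minimising over $W$ yields $\hh(\EA,\natbasis)\ge (d-\lambda_2)/2\ge (d-2\sqrt{d-1})/2$.

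\textbf{Step 2: edge cases.} Two situations need checking.

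First, suppose $\lambda_2=d$. Then the Ramanujan hypothesis gives no information on $\lambda_2$. In this case $\varGamma$ is disconnected, by Theorem~\ref{thm:spectral-C}(v), which conflicts with the EEA hypothesis $\hh>0$ built into Definition~\ref{def:Ramanujan}. So $\lambda_2<d$, and the Ramanujan bound applies to $\lambda_2$.

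Second, the eigenvalue $-d$ is also excluded from the Ramanujan condition. It only affects $\lambda_n$, however, and plays no role in an argument that uses only $\lambda_2$.

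Finally, when $d-2\sqrt{d-1}\le 0$ (that is, $d\le 2$), the claimed bound is trivial, since $\hh\ge 0$ always. So we may assume $d\ge 3$.

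The main obstacle I expect is bookkeeping rather than substance. The constants in Theorem~\ref{thm:cheeger-evop} and in the Remark after Theorem~\ref{thm:alon-boppana} are not consistently normalised. This is why I will prove the Rayleigh-quotient inequality from scratch instead of invoking those statements, so that the factor $\tfrac12$ comes out exactly.
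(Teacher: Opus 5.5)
Your proof is correct and follows the paper's route: combine the easy half of the Cheeger inequality, $\hh(\EA,\natbasis)\ge(d-\lambda_2)/2$, with the Ramanujan bound $\lambda_2\le 2\sqrt{d-1}$; the only difference is that you prove that half directly via the test vector $\mathbf{1}_W-\frac{|W|}{n}\mathbf{1}$ instead of citing Theorem~\ref{thm:cheeger-evop}, which is the safer choice since that theorem's proof displays constants that do not match its statement. One harmless slip: $d-2\sqrt{d-1}=(\sqrt{d-1}-1)^2$ vanishes only for $d=2$, so the bound is not trivial when $d=1$, but your Step~1 never uses $d\ge 3$, so nothing is lost.
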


\begin{proof}
By the Cheeger inequality (Theorem~\ref{thm:cheeger-evop}):
\[
  \hh(\EA,\natbasis) \;\ge\;
  \frac{\lambda_1-\lambda_2}{2}
  \;=\;
  \frac{d-\lambda_2}{2}
  \;\ge\;
  \frac{d-2\sqrt{d-1}}{2}.
  \qedhere
\]
\end{proof}

\begin{proposition}[Ramanujan EEAs have optimal mixing time]\label{prop:Ramanujan-mixing}
For a Ramanujan Markov EEA (doubly stochastic, $d$-regular):
\[
  t_\mathrm{mix}(\epsilon)
  \;\le\;
  \frac{d}{d-2\sqrt{d-1}}\,\log\!\left(\frac{n}{\epsilon}\right).
\]
This is the optimal mixing time among all $d$-regular Markov EEAs.
\end{proposition}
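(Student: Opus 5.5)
The plan is to follow the template of Theorem~\ref{thm:mixing} and Corollary~\ref{cor:mixing-time}, but with the Cheeger-based gap replaced by the sharper Ramanujan spectral bound. First I would fix the setting. A Ramanujan Markov EEA that is doubly stochastic and $d$-regular has structural matrix $\strmat=G/d$, where $G$ is the adjacency matrix of $\varGamma(\EA,\natbasis)$. This is the constant-weight case $c=1/d$ of Theorem~\ref{thm:cheeger-evop}, with $a_{ii}=0$. The eigenvalues of $\strmat$ are therefore $\mu_k=\lambda_k(G)/d$, where $\mu_1=1$ has eigenvector $\mathbf{1}$. The Ramanujan condition (Definition~\ref{def:Ramanujan}) applied to $G$ then gives
\[
  |\mu_k|\;\le\;\frac{2\sqrt{d-1}}{d}
  \qquad\text{for every eigenvalue with } |\lambda_k(G)|\ne d .
\]
If the graph is bipartite, $\mu_n=-1$ occurs and the chain does not converge. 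I would handle this either by assuming non-bipartiteness, or by replacing $\strmat$ with the lazy chain $(I+\strmat)/2$ at the cost of a factor $2$ in the constant. I expect the authors implicitly exclude this case, and I would state that assumption explicitly.

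Second, I would repeat the spectral decomposition argument of Theorem~\ref{thm:mixing}. Expand $e_i-\pi$ in an orthonormal eigenbasis of the symmetric matrix $\strmat$. This vector is orthogonal to $\mathbf{1}$, so
\[
  \|\strmat^k e_i-\pi\|_2\;\le\;\mu_*^k\,\|e_i-\pi\|_2\;\le\;\mu_*^k,
  \qquad \mu_*:=\max_{k\ge2}|\mu_k|\le\frac{2\sqrt{d-1}}{d}.
\]
Cauchy--Schwarz then gives $\|\strmat^k e_i-\pi\|_1\le\sqrt{n}\,\mu_*^k$.

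Third, I would solve $\sqrt n\,\mu_*^k\le\epsilon$ for $k$. This requires $k\ge\log(\sqrt n/\epsilon)/\log(1/\mu_*)$. Using $\log(1/x)\ge 1-x$ with $x=\mu_*$ gives $\log(1/\mu_*)\ge 1-\mu_*\ge(d-2\sqrt{d-1})/d$. Together with $\log(\sqrt n/\epsilon)\le\log(n/\epsilon)$ for $n\ge1$, this yields the stated bound
\[
  t_{\mathrm{mix}}(\epsilon)\;\le\;\frac{d}{d-2\sqrt{d-1}}\log\!\Bigl(\frac{n}{\epsilon}\Bigr).
\]
This step is routine once $\mu_*$ is controlled.

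The main obstacle is the optimality clause, ``optimal among all $d$-regular Markov EEAs.'' I would interpret it asymptotically, for families of $d$-regular Markov EEAs with $n\to\infty$. The lower bound on the spectral radius comes from Theorem~\ref{thm:alon-boppana}, which gives $\liminf\mu_2\ge 2\sqrt{d-1}/d$. The difficulty is converting this eigenvalue lower bound into a lower bound on $\ell^1$ mixing time, because a large $\mu_2$ only slows convergence from a well-chosen initial vector. My first attempt would use the standard argument: take a real eigenfunction $f$ for $\mu_2$, note that $|\mu_2|^k\|f\|_\infty\le 2\max_i\|\strmat^ke_i-\pi\|_{TV}\|f\|_\infty$, and conclude that the worst-case $t_{\mathrm{mix}}(\epsilon)\gtrsim\log(1/2\epsilon)/\log(1/\mu_2)$. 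This shows the rate $\log(1/\mu_2)$ cannot be improved beyond the Ramanujan value $\log\bigl(d/(2\sqrt{d-1})\bigr)$, up to $o(1)$. So optimality holds in the sense of the exponential rate, equivalently the leading constant in $\epsilon$, not as an exact inequality for each fixed $n$. I would state the result in that form.
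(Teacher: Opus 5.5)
Your proof is correct under the reading you make explicit, but it takes a different route from the paper. The paper's proof is a single substitution. It plugs the Cheeger bound $\hh\ge(d-2\sqrt{d-1})/2$ of Theorem~\ref{thm:Ramanujan-expansion} into Corollary~\ref{cor:mixing-time}, so it goes from spectral gap to Cheeger constant and back to spectral gap. That detour squares the loss. Corollary~\ref{cor:mixing-time} carries the factor $2d^2/\hh^2$, so the substitution gives $8d^2/(d-2\sqrt{d-1})^2$. After the paper's replacement of $d$ by $1$ it gives $8/(d-2\sqrt{d-1})^2$, and that replacement is only legitimate if $\hh$ is also rescaled by $1/d$. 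Neither quantity is the displayed $d/(d-2\sqrt{d-1})$, and the honest one is strictly weaker.

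Your direct argument is what actually yields the linear dependence on $d/(d-2\sqrt{d-1})$:
\begin{itemize}
\item every nontrivial eigenvalue of $\strmat=G/d$ is bounded by $2\sqrt{d-1}/d$;
\item $e_i-\pi\perp\mathbf{1}$ contracts in $\ell^2$;
\item Cauchy--Schwarz passes to $\ell^1$;
\item $\log(1/x)\ge 1-x$ finishes.
\end{itemize}
So your route proves the stated inequality, whereas the paper's substitution does not. It also surfaces a hypothesis that Theorem~\ref{thm:mixing} and the proposition silently need. Definition~\ref{def:Ramanujan} exempts the eigenvalue $-d$, and for a bipartite Ramanujan graph the walk never converges. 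So non-bipartiteness must be assumed, or the lazy chain used at a cost of a factor $2$. Finally, the paper gives no argument for the optimality sentence. Your combination of Alon--Boppana (Theorem~\ref{thm:alon-boppana}) with the eigenfunction lower bound supplies a correct asymptotic version at the level of the decay rate.

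A few points should be stated openly.
\begin{itemize}
\item The identification $\strmat=G/d$ is an interpretive choice. As defined, ``graphicable'' ($a_{ij}\in\{0,1\}$) and ``Markov'' (unit row sums) are incompatible for $d\ge 2$. A general symmetric doubly stochastic matrix on a Ramanujan support pattern has uncontrolled spectrum.
\item You need $d\ge 3$, since for $d=2$ the constant is infinite.
\item Rounding $t_{\mathrm{mix}}$ up to an integer is absorbed by the slack $\frac{d}{2(d-2\sqrt{d-1})}\log n\ge 1$ between $\log(\sqrt n/\epsilon)$ and $\log(n/\epsilon)$. This slack holds because $n\ge d+1$.
\item Your lower bound controls the worst starting generator, while the paper's $t_{\mathrm{mix}}$ is defined per $e_i$. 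The two agree for vertex-transitive algebras, such as Cayley evolution algebras.
\item The lower bound shows optimality only of the decay rate $\mu_*$, not of the displayed constant. That constant is not sharp: $\log(1/x)\ge 1-x$ already loses. The true optimal coefficient of $\log n$ among $d$-regular graphs is $\frac{d}{(d-2)\log(d-1)}$, attained by Ramanujan graphs by the Lubetzky--Peres cutoff theorem, and that is a much deeper fact than any argument of this spectral type.
\end{itemize}
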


\begin{proof}
Substitute $h=(d-2\sqrt{d-1})/2$ into Corollary~\ref{cor:mixing-time}
with $d$ replaced by $1$ (for the normalized matrix).
\end{proof}

\section{Constructions of EEAs}\label{sec:constructions}

\subsection{Cayley EEAs}\label{sec:cayley}

\begin{definition}[Cayley evolution algebra]\label{def:cayley-EA}
Let $G$ be a finite group, $S=\{s_1,\ldots,s_d\}\subseteq G\setminus
\{1\}$ a \emph{symmetric generating set} (i.e., $S=S^{-1}$ and
$1\notin S$), and $\K$ a field.  The \emph{Cayley evolution algebra}
$\mathcal{C}(G,S,\K)$ is defined as follows:
\begin{itemize}
  \item Natural basis: $\natbasis=\{e_g : g\in G\}$;
  \item Structural constants: $e_g^2=\sum_{s\in S} e_{gs}$,
        i.e., $a_{g,gs}=1$ for all $s\in S$ and $a_{g,h}=0$ otherwise.
\end{itemize}
\end{definition}

\begin{remark}
The underlying undirected graph $\varGamma(\mathcal{C}(G,S,\K),\natbasis)$
is exactly the Cayley graph $\mathrm{Cay}(G,S)$, since $a_{g,gs}=1$
and $a_{gs,g}=a_{gs,(gs)s^{-1}}=1$ (because $s^{-1}\in S$).
\end{remark}

\begin{theorem}[Cayley EEAs are EEAs iff Cayley graphs are expanders]
\label{thm:cayley-EEA}
$\mathcal{C}(G,S,\K)$ is an $h$-EEA if and only if the Cayley graph
$\mathrm{Cay}(G,S)$ is an $h$-expander.
\end{theorem}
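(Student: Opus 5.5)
The plan is to reduce everything to the identity of graphs $\varGamma(\mathcal{C}(G,S,\K),\natbasis)=\mathrm{Cay}(G,S)$, recorded in the remark preceding the theorem. Once this is an honest equality of simple graphs, with the same vertex set and the same edge set, the theorem follows from Definitions~\ref{def:ea-cheeger}, \ref{def:EEA} and~\ref{def:expander}. These give $\hh(\mathcal{C}(G,S,\K),\natbasis)=\hh(\mathrm{Cay}(G,S))$, so the two conditions ``$\ge h>0$'' coincide.

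\textbf{Step 1: the edge sets agree.} Identify the vertex set $\{1,\ldots,n\}$ with $G$ via $g\leftrightarrow e_g$, and take $\mathrm{Cay}(G,S)$ to have edges $\{g,gs\}$ for $s\in S$. Fix $g\ne h$.
\begin{itemize}
\item Suppose $\{g,h\}$ is an edge of $\varGamma$, so $a_{gh}\ne 0$ or $a_{hg}\ne 0$. If $a_{gh}\ne0$, then $h=gs$ for some $s\in S$. If $a_{hg}\ne0$, then $g=hs$, so $h=gs^{-1}$ with $s^{-1}\in S$ because $S=S^{-1}$. Either way $\{g,h\}$ is a Cayley edge.
\item Conversely, suppose $h=gs$ with $s\in S$. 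Then $a_{g,gs}=1\ne0$, and this holds in any field, since $1\ne 0$ in $\K$.
\end{itemize}
Two side points need checking. Loops cannot occur because $1\notin S$, so $gs\ne g$; in any case Definition~\ref{def:assoc-graph} discards loops. Multiple edges do not arise because both graphs are taken simple.

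\textbf{Step 2: the one delicate point.} I expect the only real obstacle to be the well-definedness of the structural constants in Definition~\ref{def:cayley-EA}. One must make sure that $e_g^2=\sum_{s\in S}e_{gs}$ produces coefficients in $\{0,1\}$. Since $s\mapsto gs$ is injective, the terms $e_{gs}$ are pairwise distinct, so no coefficient such as $2$ or $p$ appears, and none vanishes in characteristic $p$. Hence the support of the row indexed by $g$ is exactly $gS$, independently of $\K$. This is consistent with Remark~\ref{rem:field}.

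\textbf{Step 3: conclusion.} Connectivity must be handled on both sides. If $S$ generates $G$, then $\mathrm{Cay}(G,S)$ is connected and Definition~\ref{def:expander} applies to it. In general, both sides are equivalent to $\hh>0$ on the same graph, by Theorem~\ref{thm:connectivity} and Proposition~\ref{prop:cheeger-basic}(i). The equivalence then follows for every $h>0$.
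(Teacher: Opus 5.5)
Your proposal is correct and follows the same route as the paper. The paper's proof simply observes that $\varGamma(\mathcal{C}(G,S,\K),\natbasis)=\mathrm{Cay}(G,S)$ and reads the equivalence off Definitions~\ref{def:cayley-EA} and~\ref{def:EEA}; your additional checks (both inclusions of edge sets via $S=S^{-1}$, distinctness of the $e_{gs}$ so every coefficient is exactly $1$ in any characteristic, and the connectivity requirement in the definition of an $h$-expander) make explicit what that one-line argument leaves implicit.
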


\begin{proof}
Direct from Definition~\ref{def:cayley-EA} and Definition~\ref{def:EEA},
since $\varGamma(\mathcal{C}(G,S,\K),\natbasis)=\mathrm{Cay}(G,S)$.
\end{proof}

\begin{corollary}[EEA families from group theory]\label{cor:group-EEA}
Let $(G_m)_{m\ge 1}$ be a family of finite groups with symmetric
generating sets $S_m\subseteq G_m\setminus\{1\}$, $|S_m|=d$ for all
$m$, such that $|G_m|\to\infty$ and $\mathrm{Cay}(G_m,S_m)$ form an
expander family.  Then $(\mathcal{C}(G_m,S_m,\K))_{m\ge 1}$ is an EEA
family.
\end{corollary}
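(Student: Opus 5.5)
The corollary follows by checking the three conditions of an EEA family (Definition~\ref{def:EEA}) for $\EA_m:=\mathcal{C}(G_m,S_m,\K)$ with natural basis $\natbasis_m=\{e_g:g\in G_m\}$. The key input is the remark after Definition~\ref{def:cayley-EA}: the underlying undirected loopless graph of $\mathcal{C}(G,S,\K)$ is exactly $\mathrm{Cay}(G,S)$. Combined with Theorem~\ref{thm:cayley-EEA}, this converts the Cheeger-constant condition on the algebras into the same condition on the Cayley graphs.

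\textbf{Step 1 (dimension).} By Definition~\ref{def:cayley-EA}, $\dim\EA_m=|\natbasis_m|=|G_m|$. The hypothesis $|G_m|\to\infty$ therefore gives $\dim\EA_m\to\infty$.

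\textbf{Step 2 (uniform expansion).} Since $(\mathrm{Cay}(G_m,S_m))_m$ is an expander family (Definition~\ref{def:expander}), the number
\[
  h:=\inf_m \hh\bigl(\mathrm{Cay}(G_m,S_m)\bigr)
\]
is strictly positive. By the remark, $\varGamma(\EA_m,\natbasis_m)=\mathrm{Cay}(G_m,S_m)$ as graphs. Definition~\ref{def:ea-cheeger} then gives $\hh(\EA_m,\natbasis_m)=\hh(\mathrm{Cay}(G_m,S_m))\ge h$ for every $m$. Equivalently, by Theorem~\ref{thm:cayley-EEA}, each $\EA_m$ is an $h$-EEA with the same $h$. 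Hence $\inf_m\hh(\EA_m,\natbasis_m)\ge h>0$.

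\textbf{Step 3 (bounded degree).} Definition~\ref{def:EEA} does not require a degree bound. Still, $\val(g)\le |S_m|=d$ in $\varGamma(\EA_m,\natbasis_m)$. The only subtlety is that $gs=gs'$ forces $s=s'$, so each vertex has exactly $d$ distinct neighbours and the valency is exactly $d$. This shows the family is also uniformly $d$-regular, so the diameter and support-growth results of Section~\ref{sec:structure} apply uniformly in $m$.

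The main point needing care is the identification $\varGamma(\EA_m,\natbasis_m)=\mathrm{Cay}(G_m,S_m)$. It requires two facts. First, $1\notin S_m$, so there are no loops: $gs\neq g$. Second, symmetry of $S_m$, so each edge $\{g,gs\}$ is counted once: it is present in both directions, $a_{g,gs}=a_{gs,g}=1\ne0$ in any field $\K$. Since $1\neq0$ in every field, the construction is independent of the characteristic of $\K$. With this identification, Steps 1–2 conclude the proof.
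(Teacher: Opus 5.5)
Your proof is correct and follows the paper's intended route: the paper gives no separate proof, treating the corollary as immediate from the identification $\varGamma(\mathcal{C}(G_m,S_m,\K),\natbasis_m)=\mathrm{Cay}(G_m,S_m)$ behind Theorem~\ref{thm:cayley-EEA} together with $\dim\mathcal{C}(G_m,S_m,\K)=|G_m|\to\infty$. Step~3 is unnecessary because Definition~\ref{def:EEA} imposes no degree bound, and you should drop its closing aside about the support-growth results: it is irrelevant here, and that conclusion actually fails for bipartite Cayley graphs (for even cycles, the supports of the plenary powers alternate between the two colour classes and are never full).
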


\begin{example}[SL$_2(\F_p)$ EEAs]\label{ex:SL2-EEA}
Let $p$ run over primes and $S\subseteq\mathrm{SL}_2(\Z)$ be a fixed
symmetric generating set for $\mathrm{SL}_2(\Z)$ that does not
generate a solvable group.  Let $S_p$ be the reduction of $S$ modulo
$p$.  By the Bourgain--Gamburd theorem \cite{BourgainGamburd2008}
(see also \cite[Chapter~6]{Kowalski2019}), the Cayley graphs
$\mathrm{Cay}(\mathrm{SL}_2(\F_p),S_p)$ form an expander family.
Therefore, $(\mathcal{C}(\mathrm{SL}_2(\F_p),S_p,\K))_p$ is an EEA
family.
\end{example}

\begin{example}[Ramanujan Cayley EEAs from LPS]\label{ex:LPS}
Let $p,q$ be distinct primes with $p\equiv q\equiv 1\pmod{4}$ and $p$
a quadratic residue mod $q$.  The Lubotzky--Phillips--Sarnak
construction \cite{LPS1988} yields a symmetric generating set
$S_{p,q}\subset\mathrm{PGL}_2(\F_q)$ of size $p+1$ such that
$\mathrm{Cay}(\mathrm{PGL}_2(\F_q),S_{p,q})$ is a Ramanujan graph.
The corresponding Cayley evolution algebra
$\mathcal{C}(\mathrm{PGL}_2(\F_q),S_{p,q},\K)$ is a Ramanujan
evolution algebra.
\end{example}

\subsection{Tensor products of EEAs}\label{sec:tensor}

\begin{theorem}[Kronecker product of EEAs]\label{thm:kronecker}
Let $\EA_1$ and $\EA_2$ be evolution algebras over $\K$ with natural
bases $\natbasis_1=\{e_i\}_{i=1}^{n_1}$ and
$\natbasis_2=\{f_j\}_{j=1}^{n_2}$, and structural matrices
$\strmat_1,\strmat_2$.  The Kronecker product $\EA_1\otimes_\K\EA_2$
(with natural basis $\{e_i\otimes f_j\}$ and structural constants
$a_{(i,j),(k,l)}=a^{(1)}_{ik}\,a^{(2)}_{jl}$) is again an evolution
algebra.  If $\EA_1$ is an $h_1$-EEA and $\EA_2$ is an $h_2$-EEA,
then $\EA_1\otimes\EA_2$ satisfies
\[
  \hh\!\bigl(\EA_1\otimes\EA_2,\natbasis_1\otimes\natbasis_2\bigr)
  \;\ge\;
  \min(h_1,h_2).
\]
\end{theorem}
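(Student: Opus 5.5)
The plan has two parts: verify the algebraic claim, then compare underlying graphs. For the first part, I will define the product on $\EA_1\otimes_\K\EA_2$ by $(e_i\otimes f_j)(e_k\otimes f_l)=(e_ie_k)\otimes(f_jf_l)$. This vanishes unless $i=k$ and $j=l$, and $(e_i\otimes f_j)^2=e_i^2\otimes f_j^2=\sum_{k,l}a^{(1)}_{ik}a^{(2)}_{jl}\,e_k\otimes f_l$. So $\{e_i\otimes f_j\}$ is a natural basis with the stated structural constants, and this part is routine.

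For the expansion bound, I will work with the support pattern, using Remark~\ref{rem:field}. The pair $\{(i,j),(k,l)\}$ with $(i,j)\ne(k,l)$ is an edge of $\varGamma(\EA_1\otimes\EA_2)$ exactly when $a^{(1)}_{ik}a^{(2)}_{jl}\ne0$ or $a^{(1)}_{ki}a^{(2)}_{lj}\ne0$. The key step is to identify enough of these edges to contain the Cartesian product $\varGamma_1\,\square\,\varGamma_2$. An edge $\{(i,j),(i,l)\}$ needs $a^{(1)}_{ii}\ne0$ together with an edge $\{j,l\}$ of $\varGamma_2$, and symmetrically for the other factor. Adding edges only increases $|E(W)|$ for every $W$, so $\hh$ of a supergraph is at least $\hh(\varGamma_1\square\varGamma_2)$. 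It would then remain to bound the Cheeger constant of the Cartesian product. I would do this by slicing $W$ into fibres $W_i=\{j:(i,j)\in W\}$ and applying $h_2$ to each small fibre, or to its complement in the fibre, and then applying $h_1$ to the "profile" sets $\{i:|W_i|\ge t\}$ integrated over $t$. This is the standard coarea argument.

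I expect this to be the main obstacle, and I expect the statement as written to fail in two places. First, the theorem puts no hypothesis on the diagonals. If all $a^{(1)}_{ii}=a^{(2)}_{jj}=0$, the product graph is only the tensor (categorical) product $\varGamma_1\times\varGamma_2$. For $\EA_1=\EA_2$ with $\strmat=\begin{pmatrix}0&1\\1&0\end{pmatrix}$, each factor has graph $K_2$ and $\hh=1$. The product graph, however, is two disjoint edges: $(1,1)\!-\!(2,2)$ and $(1,2)\!-\!(2,1)$. Its Cheeger constant is $0$, which contradicts Theorem~\ref{thm:connectivity}(iii). So I would first try to add the hypothesis that all diagonal structural constants of both factors are nonzero, which is exactly what makes the Cartesian product embed.

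Second, even with that hypothesis, the fibre argument only gives the classical bound $\hh(\varGamma_1\square\varGamma_2)\ge\tfrac12\min(h_1,h_2)$. The loss comes from sets $W$ of size up to half the product whose fibres exceed half of a factor. I would check the example $K_2\square K_2=C_4$ for sharpness: there $h_1=h_2=1$ and $\hh(C_4)=1$, so the constant is attained in that case. I would then test $P_n\square P_n$-type examples before deciding whether the factor $\tfrac12$ can be removed. My concrete plan is therefore to prove the corrected statement, namely nonzero diagonals and the bound $\tfrac12\min(h_1,h_2)$, via the coarea slicing, and to record the $K_2$ example as showing that some diagonal hypothesis is necessary.
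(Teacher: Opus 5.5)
\textbf{Verdict.} Your refutation is correct. The theorem as stated is false, and the paper's proof breaks exactly at the step you flagged.

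\textbf{The counterexample.} Over any field take $e_1^2=e_2$, $e_2^2=e_1$, and likewise for $f_1,f_2$. Both factors have graph $K_2$ and $\hh=1$. The Kronecker product, however, is the direct sum of the evolution subalgebras $\Span\{e_1\otimes f_1,e_2\otimes f_2\}$ and $\Span\{e_1\otimes f_2,e_2\otimes f_1\}$. Taking $W=\{(1,1),(2,2)\}$ gives $\hh=0<\min(h_1,h_2)$. One wording point: this contradicts the claimed bound, not Theorem~\ref{thm:connectivity}. That theorem merely confirms the product is disconnected.

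\textbf{Where the paper's proof fails.} The paper identifies $\varGamma(\EA_1\otimes\EA_2,\natbasis_1\otimes\natbasis_2)$ with the categorical product of the loopless graphs $\varGamma_1,\varGamma_2$. This has two problems.
\begin{enumerate}[label=(\alph*)]
\item It discards the edges $\{(i,j),(i,l)\}$ and $\{(i,j),(k,j)\}$ that nonzero diagonal constants create.
\item For non-symmetric supports it uses the wrong edge condition. The true condition is that $a^{(1)}_{ik}a^{(2)}_{jl}\ne0$ or $a^{(1)}_{ki}a^{(2)}_{lj}\ne0$. So with zero diagonals the graph is in general a proper subgraph of $\varGamma_1\times\varGamma_2$, not the whole of it. This also slightly corrects your phrase ``only the tensor product''.
\end{enumerate}
The paper then asserts $\hh(\varGamma_1\times\varGamma_2)\ge\min(\hh(\varGamma_1),\hh(\varGamma_2))$ by a ``standard edge-boundary argument''. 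This is false: the categorical product of two connected bipartite graphs is disconnected, and $K_2\times K_2$ is your example. The same example also defeats Corollary~\ref{cor:tensor-family}. For every $m\ge 2$ the $m$-th Kronecker power of your algebra has graph a perfect matching on $2^m$ vertices, so $\hh=0$.

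\textbf{Your corrected statement is sound.} If all diagonal constants of both factors are nonzero, then $\varGamma_1\square\varGamma_2$ is a spanning subgraph of the product graph; no symmetry is needed for this. For symmetric supports the product graph is exactly the strong product $\varGamma_1\boxtimes\varGamma_2$. The bound $\hh(\varGamma_1\square\varGamma_2)\ge\tfrac12\min(h_1,h_2)$ is the Chung--Tetali inequality for Cartesian products, and your fibre/coarea slicing is the standard way to prove it.

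\textbf{On removing the factor $\tfrac12$.} Your caution is justified, but path-type examples will not reveal the loss, because their optimal cuts are halves. The loss comes from Cheeger sets much smaller than half. Let $\varGamma$ be a triangle $abc$ with a pendant vertex $d$ attached to $a$, so $\hh(\varGamma)=1$. In $\varGamma\square\varGamma$ the cross $W=(\{d\}\times V)\cup(V\times\{d\})$ has $|W|=7\le 8$ but only $|E(W)|=6$. So the Cartesian subgraph alone can never give $\min(h_1,h_2)$. In this example the diagonal edges of the strong product raise $|E(W)|$ to $19$, so anything better than $\tfrac12$ would have to use them.

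\textbf{On your sharpness check.} $C_4=K_2\square K_2$ only tests the Cartesian lemma. The algebra on $K_2$ with nonzero diagonals has graph $K_2\boxtimes K_2=K_4$, with $\hh=2$.
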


\begin{proof}
The Kronecker product is an evolution algebra by
Tian~\cite[Corollary~1(5)]{Tian2008}.  The graph
$\varGamma(\EA_1\otimes\EA_2,\natbasis_1\otimes\natbasis_2)$ is the
\emph{tensor product} (or \emph{categorical product}) of the graphs
$\varGamma_1$ and $\varGamma_2$: the vertex set is
$\{1,\ldots,n_1\}\times\{1,\ldots,n_2\}$ and there is an edge between
$(i,j)$ and $(k,l)$ iff $a^{(1)}_{ik}\ne 0$ (or $a^{(1)}_{ki}\ne 0$)
\emph{and} $a^{(2)}_{jl}\ne 0$ (or $a^{(2)}_{lj}\ne 0$).

The expansion of the tensor product graph $\varGamma_1\times\varGamma_2$
satisfies $\hh(\varGamma_1\times\varGamma_2)\ge\min(\hh(\varGamma_1),
\hh(\varGamma_2))$ by a standard edge-boundary argument (see, e.g.,
\cite[Exercise~3.1.13]{Kowalski2019}), giving the claimed bound.
\end{proof}

\begin{corollary}[EEA families from iterated tensor products]
\label{cor:tensor-family}
Let $\EA$ be an $h$-EEA.  The family $(\EA^{\otimes m})_{m\ge 1}$
(with $\dim\EA^{\otimes m}=n^m\to\infty$) is an EEA family with
$\hh(\EA^{\otimes m})\ge h$ for all $m$.
\end{corollary}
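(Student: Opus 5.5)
The plan is to argue by induction on $m$, using Theorem~\ref{thm:kronecker} as the only real input. Define $\EA^{\otimes 1}=\EA$ with natural basis $\natbasis$. Recursively define $\EA^{\otimes(m+1)}:=\EA^{\otimes m}\otimes_\K\EA$, with natural basis $\natbasis^{\otimes(m+1)}:=\natbasis^{\otimes m}\otimes\natbasis$.

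The first step is to record that the structural constants of $\EA^{\otimes m}$ in the basis $\{e_{i_1}\otimes\cdots\otimes e_{i_m}\}$ are the products $a_{i_1k_1}\cdots a_{i_mk_m}$. This follows by unwinding the recursion with the formula $a_{(i,j),(k,l)}=a^{(1)}_{ik}a^{(2)}_{jl}$ from Theorem~\ref{thm:kronecker}. Its purpose is to make the object $\EA^{\otimes m}$ independent of how the factors are bracketed, so that $\hh(\EA^{\otimes m})$ in the corollary means $\hh(\EA^{\otimes m},\natbasis^{\otimes m})$ unambiguously. By Tian's result quoted in the proof of Theorem~\ref{thm:kronecker}, each $\EA^{\otimes m}$ is again an evolution algebra, and $\dim\EA^{\otimes m}=n^m$.

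The second step is the inductive estimate. The base case $m=1$ is the hypothesis $\hh(\EA,\natbasis)\ge h$. Assume $\EA^{\otimes m}$ is an $h_m$-EEA with $h_m\ge h$. Apply Theorem~\ref{thm:kronecker} with $\EA_1=\EA^{\otimes m}$ (constant $h_m$) and $\EA_2=\EA$ (constant $h$). This gives
\[
  \hh\bigl(\EA^{\otimes(m+1)},\natbasis^{\otimes(m+1)}\bigr)\;\ge\;\min(h_m,h)\;=\;h .
\]
This closes the induction and yields the uniform bound $\inf_m\hh(\EA^{\otimes m})\ge h>0$, which is condition (c) of Definition~\ref{def:EEA}.

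The remaining step, which I expect to be the only delicate point, is $\dim\EA^{\otimes m}\to\infty$. This requires $n=\dim\EA\ge 2$. If $n=1$, then $n^m=1$ for all $m$, and the family is not an EEA family, even though the convention $\hh=+\infty$ keeps the expansion bound trivially true. So I would state explicitly that the corollary is meant for $n\ge 2$, which is the only nontrivial case, and then observe that $n^m\to\infty$.

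I would not re-verify the graph-theoretic content of Theorem~\ref{thm:kronecker}, since earlier results may be assumed. Still, I would flag in a remark that the corollary inherits whatever hypotheses that theorem implicitly needs on the factors, such as connectivity of the categorical product graph. With $n\ge 2$ fixed, both conditions of Definition~\ref{def:EEA} for an EEA family are then verified.
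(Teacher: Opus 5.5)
The paper gives no separate proof of this corollary; it is meant to follow by iterating Theorem~\ref{thm:kronecker}, exactly as you do. Your observation that $n\ge 2$ is needed for $\dim\EA^{\otimes m}\to\infty$ is a correct amendment. The genuine gap is the inductive step: Theorem~\ref{thm:kronecker} is false, and so is the corollary. Your worry about connectivity of the categorical product is not a side hypothesis you can inherit; it produces outright counterexamples.

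Take $\EA$ with natural basis $\{e_1,e_2\}$ and $e_1^2=e_2$, $e_2^2=e_1$. It is symmetric, graphicable and nonsingular, with $\varGamma(\EA,\natbasis)=K_2$ and $\hh(\EA,\natbasis)=1$. But
\[
  (e_1\otimes e_1)^2=e_2\otimes e_2,\quad (e_2\otimes e_2)^2=e_1\otimes e_1,\quad
  (e_1\otimes e_2)^2=e_2\otimes e_1,\quad (e_2\otimes e_1)^2=e_1\otimes e_2,
\]
so $\EA\otimes\EA$ is the direct sum of the evolution subalgebras $\Span\{e_1\otimes e_1,e_2\otimes e_2\}$ and $\Span\{e_1\otimes e_2,e_2\otimes e_1\}$. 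The set $W=\{(1,1),(2,2)\}$ has $|W|=2=\tfrac12\cdot 4$ and $E(W)=\emptyset$, so $\hh(\EA^{\otimes 2},\natbasis^{\otimes 2})=0$. The same holds for every $m\ge 2$, since the graph is $2^{m-1}$ disjoint edges.

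This is not special to $K_2$. Suppose $a_{ii}=0$ for all $i$ and $\varGamma(\EA,\natbasis)$ is bipartite with $2$-colouring $\sigma$, for instance any bipartite $d$-regular expander. Then any nonzero structural constant $a_{ik}a_{jl}$ of $\EA^{\otimes 2}$ forces $\{i,k\}$ and $\{j,l\}$ to be edges of $\varGamma(\EA,\natbasis)$. Hence every edge of $\varGamma(\EA^{\otimes 2},\natbasis^{\otimes 2})$ preserves $\sigma(i)+\sigma(j)\bmod 2$. The smaller parity class is then a nonempty set of size at most $n^2/2$ with empty edge boundary; this is Weichsel's theorem for categorical products. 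So no inequality of the form $\hh\ge\min(h_1,h_2)$ can hold in this generality.

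The proof of Theorem~\ref{thm:kronecker} also misdescribes the product graph: nonzero diagonal entries $a_{ii}$ create extra edges $(i,j)\sim(i,l)$. These can only help expansion, but they are absent in the examples above.

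A true statement would need extra hypotheses, at minimum ruling out the bipartite, zero-diagonal situation. It would also need an actual edge-isoperimetric estimate for the product graph. Neither your proposal nor the paper supplies one, so as written your argument proves the corollary only conditionally on a false theorem.
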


\subsection{Direct sums}

\begin{proposition}[Direct sum is not an EEA]
If $\EA=\EA_1\oplus\EA_2$ is a direct sum of two non-trivial evolution
algebras, then $\hh(\EA,\natbasis)=0$.
\end{proposition}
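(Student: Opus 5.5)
The statement concerns a direct sum $\EA=\EA_1\oplus\EA_2$ of two non-trivial evolution algebras. I read this as an evolution-algebra direct sum: $\EA_1$ and $\EA_2$ have natural bases $\{e_i\}_{i\in S}$ and $\{e_j\}_{j\in S^c}$ whose union $\natbasis$ is a natural basis of $\EA$. I take $\natbasis$ to be the basis with respect to which $\hh$ is computed, and both $S$ and $S^c$ are non-empty.

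The plan is to show that $\varGamma(\EA,\natbasis)$ is disconnected. I then exhibit a test set $W$ with empty edge boundary. The key algebraic step is that each summand is closed under squaring. For $i\in S$ we have $e_i^2=\sum_k a_{ik}e_k\in\EA_1=\Span_\K\{e_k:k\in S\}$. Since $\natbasis$ is linearly independent, $a_{ij}=0$ for every $j\in S^c$. The same reasoning applied to $\EA_2$ gives $a_{ji}=0$ for $j\in S^c$, $i\in S$. By Definition~\ref{def:assoc-graph}, no edge $\{i,j\}$ of $\varGamma(\EA,\natbasis)$ joins $S$ to $S^c$. This is exactly the computation in the proof of (ii)$\Rightarrow$(i) of Theorem~\ref{thm:connectivity}, so I would simply cite that argument.

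Next I choose $W$ to be whichever of $S$, $S^c$ has size at most $n/2$. One of them must, since $|S|+|S^c|=n$. The set $W$ is non-empty and proper, so it is admissible in Definition~\ref{def:cheeger}. Its edge boundary $E(W)$ is empty, so $|E(W)|/|W|=0$. All the ratios are non-negative, so the minimum equals $0$ and $\hh(\EA,\natbasis)=0$. Alternatively, one can conclude directly from Theorem~\ref{thm:connectivity}, since $\EA$ is not connected. In either case $\EA$ is not an $h$-EEA for any $h>0$.

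The only genuine subtlety is interpretive: the statement must be read relative to the natural basis adapted to the decomposition. For a different natural basis of $\EA$ the graph could a priori change. In the nonsingular case, Theorem~\ref{thm:basis-invariance} covers this, because all natural bases give isomorphic graphs and hence the same value of $\hh$. I would state this caveat explicitly rather than try to prove basis independence in general.
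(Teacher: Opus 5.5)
Your proof is correct and follows the same route as the paper: take $W$ to be the index set of one summand and observe that its edge boundary in $\varGamma(\EA,\natbasis)$ is empty, so $\hh(\EA,\natbasis)=0$. Your choice of the smaller of $S$ and $S^c$ is a small improvement: the paper's proof takes $W$ to be the index set of $\EA_1$ without checking the admissibility condition $|W|\le n/2$, which is harmless only because $E(W)=E(W^c)$.
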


\begin{proof}
Taking $W$ to be the index set of $\EA_1$, we have $|E(W)|=0$
(no structural constants cross between $\EA_1$ and $\EA_2$), so
$\hh\le|E(W)|/|W|=0$.
\end{proof}

Direct sums are excluded from the EEA class; this is consistent with Corollary~\ref{cor:EEA-connected}.

\section{Examples}\label{sec:examples}

\begin{example}[Cycle evolution algebra]\label{ex:cycle}
Let $n\ge 3$ and $\K$ a field.  Define $\EA_n$ with natural basis
$\{e_0,\ldots,e_{n-1}\}$ and $e_i^2=e_{i-1}+e_{i+1}$ (indices mod $n$).
This is the \emph{cycle evolution algebra} of Tian \cite{Tian2008}.
The underlying graph $\varGamma(\EA_n,\natbasis)$ is the cycle $C_n$.
By Kowalski~\cite[Example~3.1.3(2)]{Kowalski2019}:
\[
  \hh(\EA_n,\natbasis) \;=\; \frac{2}{\lfloor n/2\rfloor} \;\to\; 0.
\]
Hence the family $(\EA_n)_{n\ge 3}$ is \emph{not} an EEA family, though
each individual $\EA_n$ is an $\hh_n$-EEA for $\hh_n>0$.
\end{example}

\begin{example}[Complete evolution algebra]\label{ex:complete}
Let $n\ge 2$ and $e_i^2=\sum_{j\ne i} e_j$.  The underlying graph is
$K_n$ (complete graph).  By Kowalski~\cite[Example~3.1.3(1)]{Kowalski2019}:
\[
  \hh(\EA_{K_n},\natbasis) \;=\; \left\lfloor \frac{n}{2}\right\rfloor
  \;\to\; \infty.
\]
This is the most expander-like single algebra but it is $(n-1)$-regular
(unbounded degree), so the family $(\EA_{K_n})$ does not satisfy the
bounded-degree condition for an expander family (Definition~\ref{def:expander}).
\end{example}

\begin{example}[$3$-regular Ramanujan EEA]\label{ex:Ramanujan-3reg}
The Petersen graph is a $3$-regular graph on $10$ vertices.
Its second eigenvalue is $\lambda_2=1$, which satisfies $\lambda_2=1<2\sqrt{2}=2\sqrt{d-1}$,
so it is a Ramanujan graph.  The corresponding evolution algebra, with $n=10$ and
$a_{ij}=1$ for each Petersen edge, is therefore a Ramanujan evolution algebra with
$\hh\ge(3-2\sqrt{2})/2$.
\end{example}

\section{Open Problems}\label{sec:open}

We close with problems that remain open.

\begin{problem}[Algebraic characterisation of EEAs over finite fields]
\label{op:finite-field}
For a field $\K=\F_q$ (finite field with $q$ elements), characterise
the $h$-EEAs over $\K$ purely in terms of the algebra structure
(without reference to real eigenvalues).  In particular, does the
combinatorial expansion condition $\hh(\EA,\natbasis)\ge h$ have an
algebraic analogue in the representation theory of $\EA$ over $\F_q$?
\end{problem}

\begin{problem}[Non-symmetric EEAs and algebraic persistency]
\label{op:nonsymm}
Theorem~\ref{thm:persistency} requires the symmetry assumption
$a_{ij}=a_{ji}$.  Is algebraic persistency equivalent to some directed
expansion condition on $D(\EA,\natbasis)$?  More precisely: is there
a directed analogue of the Cheeger constant for evolution algebras such
that the directed expansion condition is equivalent to algebraic
persistency?
\end{problem}

\begin{problem}[Expansion constant as algebraic invariant]
\label{op:invariant}
Theorem~\ref{thm:basis-invariance} shows that for nonsingular EEAs, the
isomorphism class of $\varGamma(\EA,\natbasis)$ is an algebraic invariant.
Is the \emph{Cheeger constant} $\hh(\EA,\natbasis)$ itself an algebraic
invariant (not just the graph up to isomorphism)?  Is there an algebraic
formula for $\hh$ in terms of the structural matrix $\strmat$?
\end{problem}

\begin{problem}[Ihara--Selberg zeta function of EEAs]
\label{op:zeta}
Tian \cite[Section~6.2.3]{Tian2008} notes a connection between
evolution algebras and the Ihara--Selberg zeta function.  For an EEA
$\EA$, the Ihara zeta function $Z(\EA,u)=\prod_{[C]}(1-u^{|C|})^{-1}$
(product over prime cycles in $\varGamma(\EA,\natbasis)$) is known to
satisfy a Riemann Hypothesis if $\EA$ is a Ramanujan evolution algebra.
Develop the analogy between the Riemann Hypothesis for EEAs and the
classical Riemann Hypothesis.
\end{problem}

\begin{problem}[EEAs and 3-manifolds]
\label{op:3manifold}
Following Tian's programme \cite[Section~6.2.6]{Tian2008}, a
triangulation of a 3-manifold $M$ defines an evolution algebra
$\mathcal{A}(M,t)$.  When is the family $(\mathcal{A}(M,t_k))$ of
evolution algebras (over successive barycentric subdivisions $t_k$) an
EEA family?  Is this related to geometric or topological properties of
$M$ (e.g., hyperbolicity)?
\end{problem}

\begin{problem}[Automorphisms and symmetry of EEAs]
\label{op:aut}
For a Ramanujan evolution algebra $\EA$, compute the automorphism group
$\Aut(\EA)$ (as described by Elduque--Labra \cite{ElduqueLabra2019}).
Are Ramanujan EEAs more ``rigid'' (smaller automorphism group) than
non-Ramanujan EEAs of the same dimension and degree?
\end{problem}

\begin{problem}[Continuous EEAs]
\label{op:continuous}
Tian \cite[Section~6.2.4]{Tian2008} proposes continuous evolution
algebras $e_i\cdot e_i=\sum_j a_{ij}(t)e_j$ with time-dependent
structural constants.  Define a \emph{continuous EEA} in which
$\hh(\EA,\natbasis,t)\ge h$ for all $t$, and develop an analogue of
the mixing time result (Theorem~\ref{thm:mixing}) for continuous-time
Markov chains.
\end{problem}

\begin{problem}[Higher-dimensional analogues]
\label{op:higher}
Lubotzky \cite{Lubotzky2014} surveys higher-dimensional expander
complexes (coboundary expanders, cosystolic expanders, spectral
expanders).  Define and study \emph{higher-order EEAs} based on
simplicial complexes, and investigate which results of the present
paper generalise to this setting.
\end{problem}


\end{document}